\theoremstyle{definition}
\newtheorem{theorem}{Theorem}[section]
\newtheorem{lemma}[theorem]{Lemma}
\newtheorem{proposition}[theorem]{Proposition}
\newtheorem{observation}[theorem]{Observation} 
\newtheorem{corollary}[theorem]{Corollary} 
\theoremstyle{definition}
\newtheorem{definition}[theorem]{Definition}
\newtheorem{pclaim}[theorem]{Claim}
\newtheorem*{ac}{Acknowledgments} 
\theoremstyle{remark}
\newtheorem{remark}[theorem]{Remark}
\newenvironment{rmenum}{
\begin{enumerate}

}
{\end{enumerate}}
\newcommand{\parNei}[2]{N_{#1}(#2)}
\newcommand{\shore}[2]{\partial_{#1}(#2)}
\newcommand{\tcomp}[2]{\mathcal{G}(#1, #2)}
\newcommand{\distgt}[4]{\lambda_{(#1, #2)}(#3, #4)}
\newcommand{\distgtf}[5]{\lambda_{(#1, #2)}(#4, #5; #3)}
\newcommand{\parcut}[2]{\delta_{#1}(#2)}
\newcommand{\gtsim}[2]{\sim_{(#1, #2)}}
\newcommand{\tpart}[2]{\mathcal{P}(#1, #2)}
\newcommand{\pargtpart}[3]{\mathcal{P}(#1, #2)|_{#3}}
\newcommand{\laylegtr}[4]{L_{(#1, #2)}(#4; #3)}
\newcommand{\levelgtr}[4]{U_{(#1, #2)}(#4; #3)}
\newcommand{\laycompgtr}[4]{\mathcal{L}_{(#1, #2)}(#4; #3)}
\newcommand{\laycompall}[3]{\mathcal{L}_{(#1, #2)}(#3)}
\newcommand{\agtr}[3]{A_{(#1, #2)}(#3)}
\newcommand{\interval}[3]{I_{(#1, #2)}(#3)}
\newcommand{\ak}[3]{{A}_{(#1, #2)}(#3)} 
\newcommand{\akfam}[3]{{\mathcal{A}}_{(#1, #2)}(#3)} 
\newcommand{\dk}[3]{{D}_{(#1, #2)}(#3)} 
\newcommand{\dkconn}[3]{{\mathcal{D}}_{(#1, #2)}(#3)} 
\newcommand{\neicompk}[4]{\mathcal{B}_{(#1, #2)}(#4; #3)} 
\newcommand{\neisetk}[4]{B_{(#1, #2)}(#4; #3)} 
\newcommand{\noncapgtr}[4]{\mathcal{L}^{*}_{#4}( #3; #1, #2)}
\newcommand{\noncapall}[3]{\mathcal{L}^{*}_{(#1, #2)}(#3)}
\newcommand{\negset}[3]{D^{\circ}_{(#1, #2)}(#3)} 
\newcommand{\negsetnf}[5]{D^{\circ}_{(#1, #2),  #3}(#4 \mid #5)} 
\newcommand{\negsetn}[4]{D^{\circ}_{(#1, #2)}(#3 \mid #4)}
\title[Odd Cuts in Bipartite Grafts II]{Odd Cuts in Bipartite Grafts II: Structure and Universality of Decapital Distance Components}
\author{Nanao Kita}
\address{Nagoya University, 464-8602 Furocho, Chikusa, Nagoya, Japan}
\email{kita@math.nagoya-u.ac.jp}
\date{\today}
\begin{document}

\begin{abstract} 
This paper is the second in a series of papers characterizing the maximum packing of \( T \)-cuts in bipartite grafts, following the first paper (N.~Kita, ``Tight cuts in bipartite grafts~I: Capital distance components,'' {arXiv:2202.00192v2}, 2022).  
Given a graft $(G, T)$, a minimum join $F$, and a specified vertex $r$ called the root, 
the distance components of $(G, T)$ are defined as subgraphs of $G$ determined by the distances induced by $F$.  
A distance component is called {\em capital} if it contains the root; otherwise, it is called {\em decapital}. 
In our first paper, we investigated the canonical structure of capital distance components in bipartite grafts, 
which can be described using the graft analogue of the Kotzig--Lov\'asz decomposition.
In this paper, 
we provide the counterpart structure for the decapital distance components. 
We also establish a necessary and sufficient condition for two vertices $r$ and $r'$  
under which a decapital distance component with respect to root $r$ 
is also a decapital distance component with respect to root $r'$. 
As a consequence, we obtain that the total number of decapital distance components in a bipartite graft, taken over all choices of root, is equal to twice the number of edges in a minimum join of the graft. 
\end{abstract}

\maketitle

\section{Grafts and Joins}

For basic definitions and notation on sets and graphs, we mostly follow Schrijver~\cite{schrijver2003}. 
We also adopt definitions and notation from our previous paper~\cite{kita2022tight}, particularly for nonstandard or specialized usage. 
In this section, we provide fundamental definitions and results on grafts and joins.

\begin{definition} 
A pair of a graph $G$ and a set $T\subseteq V(G)$ is a {\em graft} 
if $|V(C)\cap T|$ is even for every connected component $C$ of $G$.  
For a graft $(G, T)$, a set $F\subseteq E(G)$ is a {\em join} of $(G, T)$ if $|\parcut{G}{v} \cap T|$ is odd for every $v\in T$ and is even for every $v\in V(G)\setminus T$. 
\end{definition} 

\begin{observation}[See Schrijver~\cite{schrijver2003}]  \label{obs:join} 
Every graft has a join. 
\end{observation}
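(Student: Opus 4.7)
The plan is to give a direct constructive proof that processes each connected component of $G$ independently and then assembles the pieces. Since the parity condition defining a join is entirely local at each vertex, and the graft condition guarantees that $|V(C) \cap T|$ is even for every connected component $C$, it suffices to construct, for each component $C$, a subset of $E(C)$ satisfying the parity condition relative to $V(C) \cap T$, and to take the union of these sets as $F$.

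So I would fix a connected component $C$ with $|V(C) \cap T|$ even, and partition $V(C) \cap T$ arbitrarily into pairs $\{t_1, t_1'\}, \dots, \{t_k, t_k'\}$. Using the connectivity of $C$, for each pair $i$ I would choose a path $P_i$ in $C$ joining $t_i$ and $t_i'$. I would then set $F_C := E(P_1)\,\triangle\,E(P_2)\,\triangle\,\cdots\,\triangle\,E(P_k)$, and finally $F := \bigcup_C F_C$.

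The main verification is a parity count at each vertex $v \in V(G)$. Working modulo $2$,
\[
|\parcut{G}{v} \cap F| \;\equiv\; \sum_{i=1}^{k} |\parcut{G}{v} \cap E(P_i)| \pmod 2,
\]
because each edge is counted in the symmetric difference with the parity of the number of paths containing it. For each path $P_i$, $|\parcut{G}{v} \cap E(P_i)|$ is $1$ if $v$ is an endpoint of $P_i$ and $0$ (either $2$ or $0$) otherwise. By the pairing, $v$ is an endpoint of exactly one $P_i$ when $v \in T$ and of none when $v \notin T$, giving the required odd/even alternative.

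Because the result is classical and the proof relies only on the connectedness of each component and an elementary symmetric-difference calculation, there is no genuine obstacle. The only point requiring care is the parity bookkeeping in the symmetric difference; once that is laid out cleanly, the argument is immediate. An alternative route would invoke the existence of a $T$-join via a matroid or matching argument, but the constructive paths-and-symmetric-difference approach sketched here is the most transparent and matches the style used in Schrijver~\cite{schrijver2003}.
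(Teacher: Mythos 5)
Your proof is correct: the paper states this as an observation and defers to Schrijver rather than proving it, and your pairing-plus-symmetric-difference construction is exactly the classical argument that the citation points to, with the parity bookkeeping at each vertex carried out correctly (each $T$-vertex is an endpoint of exactly one chosen path, every other incidence contributes $0$ or $2$ modulo $2$). No gaps; nothing further is needed.
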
 

\begin{definition} 
Under Observation~\ref{obs:join}, we denote the size of a minimum join of a graft $(G, T)$, that is, the number of edges in a join with the minimum number of edges, by $\nu(G, T)$. 
\end{definition}

\begin{definition} 
Let $(G, T)$ be a graft, and let $F\subseteq E(G)$ be a join of $(G, T)$.  
Let $X \subseteq V(G)$. 
We denote by $(G, T)_F[X]$ the graft $(G[X], T')$ 
where a vertex $v\in X$ is an element of $T'$ if and only if 
$|\parcut{G}{v}\cap (F\cap E[X])|$ is odd. 
\end{definition}

\section{Distances in Grafts and Seb\H{o}'s Distance Theorem}

In this section, we present concepts and known results on distances in grafts determined by a minimum join, which were introduced in Seb\H{o}~\cite{sebo1990}. 
We also introduce some notation related to Seb\H{o}'s results to be used in later sections.

\begin{definition} 
Let $(G, T)$ be a graft, and let $F\subseteq E(G)$. 
We define a mapping $w_F: E(G) \rightarrow \{-1, +1\}$ as 
$w_F(e) = 1$ if $e\in E(G)\setminus F$ holds and $w_F(e) = -1$ otherwise. 
For a set $S \subseteq E(G)$, we denote $\sum_{e\in S} w_F(e)$ as $w_F(S)$. 
For a subgraph $H$ of $G$, $w_F(H)$ denotes $w_F(E(H))$. 
We call $w_F(e)$, $w_F(S)$, or $w_F(H)$ the {\em $F$-weight} of $e$, $S$, or $H$, respectively. 
For $x, y \in V(G)$, $\distgtf{G}{T}{F}{x}{y}$ denotes the minimum of $w_F(P)$, 
where $P$ is taken over every path in $G$ between $x$ and $y$. 
We call $\distgtf{G}{T}{F}{x}{y}$ the {\em $F$-distance} between $x$ and $y$. 
\end{definition}

\begin{proposition}[Seb\H{o}~\cite{sebo1990}] \label{prop:distcanonical} 
Let $(G, T)$ be a graft, and let $F$ be a minimum join. 
For every $x, y\in V(G)$, 
$\distgtf{G}{T}{F}{x}{y}$ is equal to $\nu(G, T) - \nu(G, T\Delta \{x, y\})$. 
\end{proposition}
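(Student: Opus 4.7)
The plan is to prove the identity by establishing two matching inequalities, both of which hinge on a single parity-and-counting observation. The observation is that if $P$ is any $xy$-path in $G$ and $F$ is any join of $(G, T)$, then $F \Delta E(P)$ is a join of $(G, T \Delta \{x, y\})$: the set $E(P)$ has odd intersection with $\parcut{G}{v}$ precisely at $v \in \{x, y\}$, so symmetric difference with $E(P)$ flips the degree-parity of $F$ exactly at $x$ and $y$. Paired with this is the elementary counting identity $|F \Delta E(P)| = |F| + w_F(P)$: edges of $P$ lying in $F$ are removed (each contributing $-1$ to the change in size and having $F$-weight $-1$), while edges of $P$ outside $F$ are added (each contributing $+1$ to both).

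For the first direction, I would take an arbitrary $xy$-path $P$. Since $F \Delta E(P)$ is a join of $(G, T \Delta \{x,y\})$, its size is at least $\nu(G, T \Delta \{x, y\})$. Combining with $|F \Delta E(P)| = \nu(G, T) + w_F(P)$ and minimizing over $P$ produces one of the two inequalities required for the stated equality, in the form of a bound that relates $\distgtf{G}{T}{F}{x}{y}$ to the size change between the two grafts.

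For the matching direction, I would start from a minimum join $F^{*}$ of $(G, T \Delta \{x, y\})$. A parity computation, using that $F$ is a join of $(G,T)$ and $F^{*}$ is a join of $(G, T \Delta \{x,y\})$, shows that the subgraph $F \Delta F^{*}$ has odd degree exactly at $x$ and $y$, so it admits an Eulerian-type decomposition into an $xy$-path $P^{*}$ together with edge-disjoint cycles $C_{1}, \dots, C_{k}$. The crucial input here is that $w_{F}(C_{i}) \geq 0$ for every $i$: otherwise $F \Delta E(C_{i})$ would still be a join of $(G, T)$ but of size $|F| + w_{F}(C_{i}) < |F|$, contradicting the minimality of $F$. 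A direct calculation gives $w_{F}(F \Delta F^{*}) = |F^{*}| - |F|$, so extracting the nonnegative cycle contributions forces $w_{F}(P^{*}) \leq |F^{*}| - |F|$, yielding the reverse inequality for a specific path and hence for the minimum.

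The main obstacle is the ``no negative-weight cycle'' claim for a minimum join, which is the only structural use of minimality in the argument; everything else reduces to a parity check, the counting identity $|F \Delta E(P)| = |F| + w_F(P)$, and the decomposition of a subgraph with prescribed odd-degree set into a single path plus cycles. Once the cycle claim is in place, the two inequalities pin down $\distgtf{G}{T}{F}{x}{y}$ exactly as the difference of minimum-join sizes between $(G, T)$ and $(G, T \Delta \{x, y\})$, which is the content of the proposition.
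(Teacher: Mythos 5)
This proposition is quoted from Seb\H{o}~\cite{sebo1990} and the paper gives no proof of it, so there is nothing internal to compare against; your argument is the standard proof of Seb\H{o}'s distance theorem, and its ingredients are all correct and correctly deployed: the parity observation that $F\Delta E(P)$ has odd-degree set $T\Delta\{x,y\}$ and hence is a join of $(G,T\Delta\{x,y\})$, the counting identity $|F\Delta E(P)|=|F|+w_F(P)$, the decomposition of $F\Delta F^*$ into one $x$--$y$ path plus edge-disjoint circuits (possible because its odd-degree set is exactly $\{x,y\}$, which also forces $x$ and $y$ into the same component), and the nonnegativity of $w_F$ on circuits for a minimum join, which is exactly one direction of Lemma~\ref{lem:minjoin} and is the only place minimality enters.

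There is, however, one thing you must not leave implicit: the signs. Written out, your first direction gives $\nu(G,T\Delta\{x,y\})\le|F\Delta E(P)|=\nu(G,T)+w_F(P)$, hence $\distgtf{G}{T}{F}{x}{y}\ge\nu(G,T\Delta\{x,y\})-\nu(G,T)$, and your second direction gives $\distgtf{G}{T}{F}{x}{y}\le w_F(P^*)\le|F^*|-|F|=\nu(G,T\Delta\{x,y\})-\nu(G,T)$. So what your argument establishes is $\distgtf{G}{T}{F}{x}{y}=\nu(G,T\Delta\{x,y\})-\nu(G,T)$, which is the \emph{negative} of the expression printed in the proposition. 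Your version is the correct one: for an allowed edge $xy$ one has $\distgtf{G}{T}{F}{x}{y}=-1$ (Lemma~\ref{lem:allowed2dist}), whereas $\nu(G,T)-\nu(G,T\Delta\{x,y\})=+1$ since deleting $xy$ from a minimum join containing it yields a join of $(G,T\Delta\{x,y\})$. The statement as printed has the difference reversed. Your phrasing ``produces one of the two inequalities required for the stated equality'' papers over this; state both inequalities explicitly and record that the identity holds with the two $\nu$-values in the opposite order from the displayed formula.
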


\begin{definition} 
Under Proposition~\ref{prop:distcanonical}, 
for a graft $(G, T)$ and vertices $x, y\in V(G)$, 
we denote by $\distgt{G}{T}{x}{y}$ the value $\nu(G, T) - \nu(G, T\Delta \{x, y\})$, 
which is equal to $\distgtf{G}{T}{F}{x}{y}$ for every minimum join $F$. 
\end{definition}

\begin{proposition}[Seb\H{o}~\cite{sebo1990}]  \label{prop:adjdist} 
Let $(G, T)$ be a bipartite graft, let $r \in V(G)$, and let $F \subseteq E(G)$ be a minimum join. 
Let $u$ and $v$ be adjacent vertices in $G$. 
Then, $|\distgtf{G}{T}{F}{r}{u} - \distgtf{G}{T}{F}{r}{v}| = 1$. 
\end{proposition}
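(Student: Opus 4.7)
The plan is to sandwich $|\distgtf{G}{T}{F}{r}{u} - \distgtf{G}{T}{F}{r}{v}|$ between $1$ and $1$: an upper bound coming from Proposition~\ref{prop:distcanonical} together with a symmetric-difference trick on joins, and a lower bound of~$1$ coming from a parity argument that exploits the bipartiteness of~$G$.

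For the upper bound, Proposition~\ref{prop:distcanonical} yields
\[
\distgtf{G}{T}{F}{r}{u} - \distgtf{G}{T}{F}{r}{v} = \nu(G, T \Delta \{r,v\}) - \nu(G, T \Delta \{r,u\}),
\]
so it suffices to bound the right-hand side. Set $e = uv$ and note $(T \Delta \{r,u\}) \Delta (T \Delta \{r,v\}) = \{u,v\}$. If $F'$ is any join of $(G, T \Delta \{r,u\})$, then $F' \Delta \{e\}$ is a join of $(G, T \Delta \{r,v\})$, because toggling $e$ flips the parity of $|\parcut{G}{x} \cap F'|$ at exactly the vertices $x \in \{u,v\}$, which is precisely the required parity change. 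Taking $F'$ minimum gives $\nu(G, T \Delta \{r,v\}) \le \nu(G, T \Delta \{r,u\}) + 1$, and the reverse inequality follows by swapping the roles of $u$ and $v$.

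For the parity step, bipartiteness of $G$ forces all $r$--$u$ paths to have lengths of a common parity $p_{ru}$, and all $r$--$v$ paths to have lengths of a common parity $p_{rv}$, with $p_{ru} \not\equiv p_{rv} \pmod{2}$ because $uv$ is an edge. For every path $P$, $w_F(P) = |E(P)| - 2|E(P) \cap F|$ has the same parity as $|E(P)|$, hence $\distgtf{G}{T}{F}{r}{u} \equiv p_{ru}$ and $\distgtf{G}{T}{F}{r}{v} \equiv p_{rv}$ modulo $2$. Thus the two distances differ in parity, and combined with the bound $\le 1$ this forces $|\distgtf{G}{T}{F}{r}{u} - \distgtf{G}{T}{F}{r}{v}| = 1$.

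The only mild obstacle is the verification that $F' \Delta \{e\}$ is in fact a join of $(G, T \Delta \{r,v\})$; this is a routine parity check carried out vertex by vertex and is the only step where the structural definition of a join is actively invoked. Everything else reduces to Proposition~\ref{prop:distcanonical} and elementary parity considerations on path lengths in a bipartite graph.
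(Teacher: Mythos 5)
The paper does not prove Proposition~\ref{prop:adjdist}; it is quoted from Seb\H{o} without proof, so there is no in-paper argument to compare against. Your argument is correct and self-contained given the material the paper does quote: the symmetric-difference step is sound, since toggling the single edge $uv$ flips the degree parity of a join exactly at $u$ and $v$, so it converts a join of $(G, T\Delta\{r,u\})$ into a join of $(G, T\Delta\{r,v\})$ and conversely, giving $|\distgtf{G}{T}{F}{r}{u}-\distgtf{G}{T}{F}{r}{v}|\le 1$ via Proposition~\ref{prop:distcanonical}; and the parity step is also sound, since $w_F(P)=|E(P)|-2|E(P)\cap F|\equiv |E(P)| \pmod 2$ and bipartiteness fixes opposite path-length parities for $r$--$u$ and $r$--$v$ paths, so the two distances cannot be equal. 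This is essentially the standard (Seb\H{o}-style) route to the fact, and choosing the $\nu$-based bound rather than appending the edge $uv$ to a shortest path neatly avoids the issue that such an extension need not be a path. The only implicit assumption is that $r$, $u$, $v$ lie in a common component of $G$, so that $(G, T\Delta\{r,u\})$ and $(G, T\Delta\{r,v\})$ are grafts and the distances are finite; this is the same convention the paper itself uses, so it is not a gap.
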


\begin{definition} 
For a graft $(G, T)$ and $r\in V(G)$, 
we denote by $\interval{G}{T}{r}$ the set $\{  \distgt{G}{T}{r}{x} : x\in V(G) \}$ of integers. 
\end{definition}

\begin{definition} 
Let $(G, T)$ be a graft, and let $r\in V(G)$. 
For each $i\in \interval{G}{T}{r}$,  $\levelgtr{G}{T}{r}{i}$ denotes the set $\{ x\in V(G): \distgt{G}{T}{r}{x} = i \}$.   
The set $\{ x \in V(G) : \distgt{G}{T}{r}{x} \le  i\}$ is denoted by $\laylegtr{G}{T}{r}{i}$. 
The set of connected components of the subgraph of $G$ induced by $\laylegtr{G}{T}{r}{i}$ is denoted by $\laycompgtr{G}{T}{r}{i}$. 
The union of the sets $\laycompgtr{G}{T}{r}{i}$ over all  $i \in \interval{G}{T}{r}$ is denoted by $\laycompall{G}{T}{r}$. 
The members of $\laycompall{G}{T}{r}$ are called {\em distance components} of $(G, T)$ with respect to $r$. 
A distance component $L \in \laycompall{G}{T}{r}$ is said to be {\em capital} if $r \in V(L)$ holds; 
otherwise, $L$ is said to be {\em decapital}. 
The set of decapital members in $\laycompall{G}{T}{r}$ is denoted by $\noncapall{G}{T}{r}$. 
\end{definition}

\begin{definition} 
Let $(G, T)$ be a graft, and let $F$ be a minimum join. Let $r\in V(G)$. 
Graft $(G, T)$ is {\em primal} with respect to $r$ 
if $\distgtf{G}{T}{F}{r}{x} \le 0$ holds for every $x\in V(G)$. 
\end{definition}

Theorem~\ref{thm:sebo} presents a part of the main results from Seb\H{o}~\cite{sebo1990}.

\begin{theorem}[Seb\H{o}~\cite{sebo1990}]  \label{thm:sebo} 
Let $(G, T)$ be a bipartite graft, and let $F$ be a minimum join. Let $r\in V(G)$. 
Then, the following properties hold. 
\begin{rmenum} 
\item \label{item:sebo:cap} For every capital member $K \in \laycompall{G}{T}{r}$, $\parcut{G}{L}\cap F = \emptyset$. 
\item \label{item:sebo:noncap} For every decapital member $K \in \laycompall{G}{T}{r}$, $|\parcut{G}{L}\cap F| = 1$. 
\item \label{item:sebo:inpath} Let $K \in \noncapall{G}{T}{r}$, and let $r_K \in \shore{G}{\parcut{G}{K}\cap F} \cap V(K)$. 
Then $(G, T)_F[K]$ is a primal graft with root $r_K$, in which $F\cap E(K)$ is a minimum join. 
Furthermore, let $i_K := \distgtf{G}{T}{F}{r}{r_K}$ and $I := \{ i\in \interval{G}{T}{r}:  \levelgtr{G}{T}{r}{i} \cap V(K) \neq \emptyset\}$; 
then for every $i\in I$ and every $x\in \levelgtr{G}{T}{r}{i}\cap V(K)$, 
the $F$-distance in $(G, T)_F[K]$ between $r_K$ and $x$ is $i - i_K$. 
\end{rmenum} 
\end{theorem}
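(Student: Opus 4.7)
The plan is to first establish a preliminary lemma on $F$-shortest paths, then derive the three items in sequence. The lemma asserts that along any $F$-shortest path $P = v_0 v_1 \cdots v_k$, every edge satisfies $w_F(v_j v_{j+1}) = \distgtf{G}{T}{F}{v_0}{v_{j+1}} - \distgtf{G}{T}{F}{v_0}{v_j}$; equivalently, every subpath of a shortest path is itself shortest. This follows from the triangle inequality for $F$-distances along walks, which in turn relies on the fact that the minimum join $F$ admits no simple cycle of negative $F$-weight (otherwise XORing with such a cycle would yield a strictly smaller join).

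For (i), I would first show by induction on $\distgtf{G}{T}{F}{r}{u}$ that every $u$ in a capital component $K \in \laycompgtr{G}{T}{r}{i}$ admits an $F$-shortest path from $r$ lying entirely in $K$: the preliminary lemma together with Proposition~\ref{prop:adjdist} guarantees that any $u \neq r$ has a neighbor $u'$ with $\distgtf{G}{T}{F}{r}{u'} = \distgtf{G}{T}{F}{r}{u} - 1$, and since $u' \in \laylegtr{G}{T}{r}{i}$ and $u' u \in E(G)$, the definition of distance components forces $u' \in K$. Now if $e = uv \in F$ with $u \in K$ and $v \notin K$, Proposition~\ref{prop:adjdist} combined with the component definition yields $\distgtf{G}{T}{F}{r}{u} = i$ and $\distgtf{G}{T}{F}{r}{v} = i + 1$; appending $e$ to a shortest path from $r$ to $u$ inside $K$ produces a simple path (as $v \notin K$) from $r$ to $v$ of weight $i - 1$, contradicting $\distgtf{G}{T}{F}{r}{v} = i+1$.

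For (ii), the lower bound $|F \cap \parcut{G}{K}| \geq 1$ follows analogously: a shortest path from $r$ to a vertex of a decapital $K$ must cross $\parcut{G}{K}$ at a first edge $e = v' v$ with $v' \notin K$, $v \in K$; an adjacency and component argument forces $\distgtf{G}{T}{F}{r}{v'} = i + 1$ and $\distgtf{G}{T}{F}{r}{v} = i$, so by the preliminary lemma $w_F(e) = -1$, that is $e \in F$. The upper bound $|F \cap \parcut{G}{K}| \leq 1$ is the principal obstacle. I would attack it by contradiction, assuming distinct $e_1, e_2 \in F \cap \parcut{G}{K}$ and constructing a strictly smaller join by XORing $F$ with a suitable closed walk through $e_1$ and $e_2$ (using a path inside $K$ between the inner endpoints and a path along $F$-edges outside $K$ between the outer endpoints). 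Verifying that the size strictly decreases and that parities are preserved at every vertex is the delicate step, relying on the bipartite structure and the alternation of $F$- and non-$F$-edges forced by the preliminary lemma.

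For (iii), by (ii) let $e = v' r_K \in F \cap \parcut{G}{K}$ be the unique such edge with $r_K \in K$. Applying the preliminary lemma to a shortest path from $r$ entering $K$ via $e$ shows $i_K = \distgtf{G}{T}{F}{r}{r_K} = i$, so every $x \in K$ satisfies $\distgtf{G}{T}{F}{r}{x} \leq i_K$. The subgraft $(G, T)_F[K]$ is a graft and $F \cap E(K)$ is a join of it by definition; minimality follows because any strictly smaller join in $(G, T)_F[K]$ would combine with $F \setminus E(K)$ to yield a strictly smaller join of $(G, T)$, contradicting the minimality of $F$. The distance translation, namely that the $F \cap E(K)$-distance in $(G, T)_F[K]$ from $r_K$ to $x$ equals $\distgtf{G}{T}{F}{r}{x} - i_K$ for each $x \in K$, is obtained by decomposing any shortest path from $r$ to $x$ through the unique entry edge $e$ and, conversely, by prepending the shortest path from $r$ to $v'$ together with $e$ to a shortest path in $(G, T)_F[K]$ from $r_K$ to $x$. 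Primality is then immediate.
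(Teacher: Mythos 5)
Two remarks before the substance: this theorem is quoted in the paper from Seb\H{o}'s work and is not proved there, so your attempt has to stand on its own; and as it stands it does not, because the heart of the theorem is missing. The decisive gap is the upper bound in item (ii), that a decapital component has \emph{at most} one $F$-edge in its boundary. Your plan is to contradict the minimality of $F$ by taking the symmetric difference with a closed walk through $e_1$ and $e_2$, but this mechanism cannot work: the edges used an odd number of times by any closed walk form an edge-disjoint union of circuits, and by Lemma~\ref{lem:minjoin} every circuit $C$ satisfies $w_F(C)\ge 0$, so $|F\Delta E(C)|=|F|+w_F(C)\ge |F|$ --- no such exchange ever produces a strictly smaller join. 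The only way to reach a contradiction is to show that two boundary $F$-edges force a circuit of \emph{negative} $F$-weight (or to follow Seb\H{o}'s original, more elaborate route), and nothing in your sketch does this: there is no reason the outer ends of $e_1,e_2$ are joined by a path of $F$-edges, and the ``delicate step'' you defer is precisely the content of the theorem. Items (i) and the lower bound of (ii) are comparatively easy; (ii)'s upper bound is what everything else in the paper leans on, and it is not proved here.

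There is also a false step in your argument for (i): it is not true that every vertex $u\neq r$ has a neighbour $u'$ with $\lambda_{(G,T)}(r,u';F)=\lambda_{(G,T)}(r,u;F)-1$. Take the $6$-cycle $r x_1 x_2 x_3 x_4 x_5$ with $T=\{x_1,x_2\}$ and $F=\{x_1x_2\}$: then $\lambda_{(G,T)}(r,x_2;F)=0$ while both neighbours $x_1,x_3$ of $x_2$ are at distance $1$, and $x_2$ lies in the capital component at level $1$, so your induction has no vertex to step down to (the penultimate vertex of a shortest path sits at distance $\lambda+1$ whenever the last edge lies in $F$). The conclusion of (i) is salvageable without the ``shortest path inside $K$'' claim: with $e=uv\in F$, $u\in V(K)$, $v\notin V(K)$, take any $F$-shortest $r$--$u$ path $P$; if $v\notin V(P)$ then $P+e$ beats $\lambda_{(G,T)}(r,v;F)=i+1$, and if $v\in V(P)$ then $vPu+e$ is a circuit of $F$-weight $-2$, contradicting Lemma~\ref{lem:minjoin}. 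Your treatment of (iii) is essentially fine once (ii) is available, except that prepending a shortest $r$--$s_K$ path to a path inside $K$ need not yield a simple path; argue with walks and conservativeness (or Proposition~\ref{prop:distcanonical}) instead. But without a genuine proof of (ii) the proposal does not establish the theorem.
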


\begin{definition} 
Let $(G, T)$ be a bipartite graft, and let $r \in V(G)$. Let $F$ be a minimum join. 
Let $K \in \noncapall{G}{T}{r}$. 
Under Theorem~\ref{thm:sebo},  let $e$ be the edge in $F\cap \parcut{G}{K}$, 
and let $r_K \in V(K)$ and $s_K \in V(G)\setminus V(K)$ be the ends of $e$. 
We call $e$ the {\em $F$-beam} of $K$, 
and $r_K$ and $s_K$ be the {\em $F$-root} and {\em -antiroot} of $K$, respectively. 
\end{definition}

\begin{definition} 
Let $(G, T)$ be a bipartite graft, and let $F$ be a minimum join. Let $r\in V(G)$. 
For $K \in \laycompall{G}{T}{r}$, 
we denote the sets $V(K)\cap \levelgtr{G}{T}{r}{i}$ and $V(K)\setminus \levelgtr{G}{T}{r}{i}$ by $\ak{G}{T}{K}$ and $\dk{G}{T}{K}$, respectively,  
where $i \in \interval{G}{T}{r}$ is such that $K \in \laycompgtr{G}{T}{r}{i}$.  
We denote the set of connected components of $G[\dk{G}{T}{K}]$ by $\dkconn{G}{T}{K}$. 
\end{definition}

\begin{remark} 
Note that Theorem~\ref{thm:sebo}~\ref{item:sebo:inpath} implies the following. 
In the primal graft $(G, T)_F[K]$ with respect to root $r_K$, 
$\ak{G}{T}{K}$ is the set of vertices whose $F$-distance from $r_K$ is zero. 
\end{remark}

\section{Factor-Connectivity and Kotzig--Lov\'asz Decomposition for Grafts} 

In this section, we present the Kotzig--Lov\'asz decomposition for grafts~\cite{DBLP:journals/dam/Kita22}. 

\begin{definition} 
Let $(G, T)$ be a graft. 
We say that an edge $e\in E(G)$ is {\em allowed} if $(G, T)$ has a minimum join $F$ with $e \in F$. 
Two vertices $x, y\in V(G)$ are said to be {\em factor-connected} if $G$ has a path between $x$ and $y$ in which every edge is allowed in $(G, T)$. 
A {\em factor-connected component} or a {\em factor-component} of $(G, T)$ is a maximal subgraph of $G$ in which every two vertices are factor-connected in $(G, T)$. 
The set of factor-components of $(G, T)$ is denoted by $\tcomp{G}{T}$. 
\end{definition}

\begin{definition} 
Let $(G, T)$ be a graft. 
For $u, v\in V(G)$, 
we say that $u \gtsim{G}{T} v$ if $u = v$ or if $u$ and $v$ are distinct vertices from the same factor-component of $(G, T)$ 
such that $\nu(G, T) = \nu(G, T\Delta \{u, v\})$. 
\end{definition} 

\begin{theorem}[Seb\H{o}~\cite{sebo1990}, Kita~\cite{DBLP:journals/dam/Kita22}]  \label{thm:tkl} 
Let $(G, T)$ be a graft. 
Then, $\gtsim{G}{T}$ is an equivalence relation on $V(G)$. 
Every equivalence class of $\gtsim{G}{T}$ is contained in a factor-component of $(G, T)$. 
\end{theorem}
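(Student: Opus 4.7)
Reflexivity is immediate from the definition of $\gtsim{G}{T}$, and symmetry follows from the symmetric role of $u$ and $v$ in both defining conditions (lying in a common factor-component, and $\nu(G, T) = \nu(G, T \Delta \{u, v\})$). The second assertion is then a direct consequence of transitivity: the definition of $\gtsim{G}{T}$ already places any two distinct equivalent vertices in a common factor-component, so once transitivity is known, each equivalence class is contained in a single factor-component. Hence the substantive task is transitivity.

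Let $u, v, w$ be pairwise distinct with $u \gtsim{G}{T} v$ and $v \gtsim{G}{T} w$. Factor-connectedness is itself transitive---it is the equivalence relation generated by ``adjacent via an allowed edge''---so $u$ and $w$ share a factor-component, and the only remaining claim is $\distgt{G}{T}{u}{w} = 0$. My plan is to fix a minimum join $F$, invoke Proposition~\ref{prop:distcanonical} to obtain zero-$F$-weight paths $P_1$ from $u$ to $v$ and $P_2$ from $v$ to $w$, and analyze the symmetric difference $D := E(P_1) \Delta E(P_2)$. Since the only vertices of odd degree in $D$ are $u$ and $w$, $D$ decomposes as an edge-disjoint union of a simple $u$-$w$ path $Q$ and simple cycles $C_1, \dots, C_k$. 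The cornerstone is that $w_F(C) \ge 0$ for every cycle $C$ in $G$: $F \Delta E(C)$ is again a join of $(G, T)$ (XOR with a cycle preserves the parity of every vertex cut), so minimality of $F$ forces $|F \Delta E(C)| \ge |F|$, which rearranges to $w_F(C) \ge 0$. A careful choice of $P_1, P_2$---using that subpaths of shortest $F$-weight paths are themselves shortest, and the adjacent-distance rigidity of Proposition~\ref{prop:adjdist}---makes $E(P_1) \cap E(P_2)$ well-behaved, from which $\distgt{G}{T}{u}{w} \le w_F(Q) \le 0$ follows. A mirror argument applied to the triple $(v, u, w)$, using the zero-weight $u$-$w$ path just produced together with the reverse of $P_1$, yields $\distgt{G}{T}{u}{w} \ge 0$, completing the equality.

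The principal obstacle is the shared-edge bookkeeping. The symmetric-difference identity gives $w_F(D) = w_F(P_1) + w_F(P_2) - 2 w_F(E(P_1) \cap E(P_2))$, and the common-edges term has no definite sign in general; in particular, a purely metric-style triangle inequality for $\distgt{G}{T}{}{}$ fails, because distances in grafts can take negative values and the relation is not a genuine metric. Bipartiteness and the height-function behaviour of $F$-distances from a root guaranteed by Proposition~\ref{prop:adjdist} are what allow one to reroute witness paths along alternating even cycles so that the cycle non-negativity lemma can be applied cleanly. Once this rerouting is in place, transitivity is immediate, and the factor-component containment assertion follows as noted above.
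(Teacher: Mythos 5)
The paper itself states Theorem~\ref{thm:tkl} as a quoted result of Seb\H{o} and Kita and gives no proof, so your proposal can only be judged on its own merits; and on its own merits it has a genuine gap exactly at the heart of the theorem, namely transitivity. The step ``a careful choice of $P_1, P_2$ \dots makes $E(P_1)\cap E(P_2)$ well-behaved'' is an assertion, not an argument, and it cannot be completed in the form you describe, because your distance argument never uses the hypothesis that $u,v$ and $v,w$ are factor-connected. Without that hypothesis the statement you are trying to prove is false: take $G=K_{1,3}$ with center $c$ and leaves $u,v,w$, and $T=\{c,u\}$. The unique minimum join is $F=\{cu\}$, and one checks $\distgtf{G}{T}{F}{v}{u}=0$ and $\distgtf{G}{T}{F}{u}{w}=0$ but $\distgtf{G}{T}{F}{v}{w}=2$. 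Here the unique zero-weight witness paths $vcu$ and $ucw$ share the $F$-edge $cu$, their symmetric difference is the positive path $vcw$, and there is nothing to reroute; note the example is even bipartite, so Proposition~\ref{prop:adjdist} and ``height-function'' rigidity cannot rescue the plan. (Of course $v\not\gtsim{G}{T}u$ here because $v$ and $u$ lie in different factor-components --- which is precisely the hypothesis your symmetric-difference bookkeeping never invokes.) A correct proof must bring the allowed-edge structure into the distance estimate, e.g.\ via exchange of minimum joins along zero-weight circuits in the spirit of Lemma~\ref{lem:alt}, rather than relying only on cycle nonnegativity (Lemma~\ref{lem:minjoin}).

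Two further problems: Theorem~\ref{thm:tkl} concerns arbitrary grafts, while Proposition~\ref{prop:adjdist} is stated only for bipartite grafts, so invoking bipartiteness is out of scope even if it sufficed; and the ``mirror argument applied to $(v,u,w)$'' as described would only bound $\distgt{G}{T}{v}{w}$, which is already known to be $0$, so the lower bound $\distgt{G}{T}{u}{w}\ge 0$ is not actually established either. The unproblematic parts --- reflexivity, symmetry, the observation that $w_F(C)\ge 0$ for every circuit, and the deduction that each equivalence class lies in a single factor-component once transitivity is known --- are fine, but as it stands the proposal does not prove the theorem.
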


\begin{definition} 
Under Theorem~\ref{thm:tkl}, 
we call the family $V(G)/ \gtsim{G}{T}$ of equivalence classes 
 the {\em general Kotzig--Lov\'asz decomposition} or simply the {\em Kotzig--Lov\'asz decomposition} of $(G, T)$ 
and denote it by $\tpart{G}{T}$. 
For $C \in \tcomp{G}{T}$, the set of equivalence classes that are contained in $C$ 
is denoted by $\pargtpart{G}{T}{C}$. 
\end{definition}

\section{Basic Lemmas on Minimum Joins} 

In this section, we present lemmas on fundamental properties of minimum joins in grafts. 
See also Seb\H{o}~\cite{sebo1990} or Kita~\cite{DBLP:journals/dam/Kita22}.

\begin{lemma}  \label{lem:minjoin} 
Let $(G, T)$ be a graft. 
Then, $F\subseteq E(G)$ is a minimum join of $(G, T)$ if and only if, 
for every circuit $C$ in $G$, $w_F(C) \ge 0$. 
\end{lemma}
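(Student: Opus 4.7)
The plan is to prove both directions by exploiting the symmetric-difference operation between $F$ and another edge set, and tracking how the cardinality changes via the $F$-weight.

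For the forward implication, I would start with a minimum join $F$ and an arbitrary circuit $C$ of $G$. The crucial observation is that $F' := F \Delta E(C)$ is again a join of $(G,T)$: since every vertex has even degree in $C$, the parity of $|\parcut{G}{v} \cap F'|$ matches that of $|\parcut{G}{v} \cap F|$ for every $v \in V(G)$, so $F'$ satisfies the same parity condition at every vertex as $F$. Then a direct count gives
\[
|F'| = |F| - |F \cap E(C)| + |E(C) \setminus F| = |F| + w_F(C),
\]
using the definition of $w_F$. Minimality of $F$ yields $|F'| \geq |F|$, hence $w_F(C) \geq 0$.

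For the reverse implication, assume $F$ is a join with $w_F(C) \geq 0$ for every circuit $C$, and let $F^*$ be any minimum join. Since both $F$ and $F^*$ are joins, for each $v \in V(G)$ the sets $\parcut{G}{v} \cap F$ and $\parcut{G}{v} \cap F^*$ have the same parity, so every vertex has even degree in the subgraph $(V(G), F \Delta F^*)$. By the standard fact that every graph with all even degrees decomposes into edge-disjoint circuits, we may write $F \Delta F^* = E(C_1) \cup \cdots \cup E(C_k)$ with the $E(C_j)$ pairwise disjoint. For each $C_j$, the edges in $E(C_j) \cap F$ are exactly those in $E(C_j) \cap (F \setminus F^*)$ and the edges in $E(C_j) \setminus F$ are exactly those in $E(C_j) \cap (F^* \setminus F)$, so
\[
w_F(C_j) = |E(C_j) \cap (F^* \setminus F)| - |E(C_j) \cap (F \setminus F^*)|.
\]
Summing over $j$ collapses to $|F^* \setminus F| - |F \setminus F^*| = |F^*| - |F|$. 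The hypothesis $w_F(C_j) \geq 0$ then gives $|F| \leq |F^*|$, so $F$ is itself a minimum join.

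The only genuine technical point is the decomposition of the even-degree subgraph $F \Delta F^*$ into edge-disjoint circuits; everything else is just the parity bookkeeping that identifies $|F'| - |F|$ with $w_F(C)$ under symmetric difference. I expect the argument to be purely formal once the join-preservation property of $F \Delta E(C)$ and the circuit decomposition are in place, and I would cite (or take for granted from the literature on $T$-joins) the even-degree circuit decomposition rather than reprove it.
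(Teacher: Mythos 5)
Your proof is correct and is the standard argument: the paper itself does not prove Lemma~\ref{lem:minjoin} but defers it to Seb\H{o} and Kita, and your symmetric-difference computation for $F\,\Delta\,E(C)$ together with the even-degree circuit decomposition of $F\,\Delta\,F^*$ is exactly the classical proof found there. One small point: in the reverse direction you (rightly) assume $F$ is a join, which is the intended reading of the lemma, since an arbitrary $F\subseteq E(G)$ satisfying the circuit condition need not be a join at all.
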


\begin{lemma} \label{lem:alt} 
Let $(G, T)$ be a graft and $F$ be a minimum join. 
If $C$ is a circuit with $w_F(C) = 0$, 
then $F\Delta E(C)$ is also a minimum join of $(G, T)$. 
\end{lemma}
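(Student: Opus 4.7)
The plan is to verify directly that $F' := F \,\Delta\, E(C)$ is a minimum join by checking the two defining properties: (i) $F'$ is a join of $(G, T)$, and (ii) $|F'| = |F|$. Since $F$ is already known to be a minimum join, these two facts together yield the conclusion.

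For (i), I would use the fact that a circuit $C$ has even degree at every vertex (in fact, degree $0$ or $2$, but evenness is all that matters). Fix an arbitrary $v \in V(G)$. Then $|\parcut{G}{v} \cap E(C)|$ is even, and since
\[
|\parcut{G}{v} \cap F'| \;\equiv\; |\parcut{G}{v} \cap F| + |\parcut{G}{v} \cap E(C)| \pmod{2},
\]
the parity of $|\parcut{G}{v} \cap F'|$ matches that of $|\parcut{G}{v} \cap F|$. The join conditions on $F$ therefore transfer verbatim to $F'$, making $F'$ a join.

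For (ii), I would unpack the weight function. By definition, $w_F(C) = |E(C) \setminus F| - |E(C) \cap F| = |E(C)| - 2|E(C) \cap F|$, so the hypothesis $w_F(C) = 0$ rewrites as $|E(C)| = 2|E(C) \cap F|$. Combining with the standard identity $|F \,\Delta\, E(C)| = |F| + |E(C)| - 2|F \cap E(C)|$ immediately gives $|F'| = |F|$.

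Honestly, the main obstacle here is essentially bookkeeping on the sign convention for $w_F$ (since $F$-edges carry weight $-1$ and non-$F$ edges carry weight $+1$, the identity $w_F(C) = 0 \iff |E(C) \setminus F| = |E(C) \cap F|$ needs to be stated in the right direction); no deeper structural input such as Lemma~\ref{lem:minjoin} is required, though one could alternatively derive (ii) by applying that lemma to $F$ and to $F'$ symmetrically. I would favor the direct counting argument above for brevity.
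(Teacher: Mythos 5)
Your proof is correct. The paper states this lemma without giving a proof (it only points to Seb\H{o} and Kita for background), and your argument is exactly the standard one: taking the symmetric difference with a circuit changes $|\parcut{G}{v}\cap F|$ by an even amount at every vertex, so the join parity condition is preserved, and $w_F(C)=0$ means $C$ has equally many $F$-edges and non-$F$-edges, so $|F\Delta E(C)|=|F|=\nu(G,T)$, making $F\Delta E(C)$ a join of minimum size. No appeal to Lemma~\ref{lem:minjoin} is needed, as you note.
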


Lemma~\ref{lem:allowed2dist} can easily be confirmed from Lemma~\ref{lem:minjoin}. 

\begin{lemma}  \label{lem:allowed2dist} 
Let $(G, T)$ be a graft and $F$ be a minimum join. 
Let $x$ and $y$ be adjacent vertices. 
Then, the edge $xy$ is allowed if and only if $\distgtf{G}{T}{F}{x}{y} = -1$. 
\end{lemma}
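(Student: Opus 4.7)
The plan is to handle the two directions separately, with both relying principally on Lemma~\ref{lem:minjoin} (together with Lemma~\ref{lem:alt} for the nontrivial direction). The key observation driving everything is that if $P$ is any $x$--$y$ path that does not contain the edge $xy$, then $E(P)\cup\{xy\}$ is the edge set of a circuit, so by Lemma~\ref{lem:minjoin} we get $w_F(P) + w_F(xy) \ge 0$.

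For the forward direction, suppose $xy$ is allowed, so there is a minimum join $F'$ of $(G,T)$ with $xy\in F'$. Then $w_{F'}(xy)=-1$, so the single-edge $x$--$y$ path already witnesses $\distgtf{G}{T}{F'}{x}{y}\le -1$. To see equality, I would apply the observation above to $F'$: for any other $x$--$y$ path $P$, the circuit $E(P)\cup\{xy\}$ has nonnegative $F'$-weight, so $w_{F'}(P)\ge 1$. Hence $\distgtf{G}{T}{F'}{x}{y}=-1$. Finally, by Proposition~\ref{prop:distcanonical} the $F$-distance does not depend on the choice of minimum join, so $\distgtf{G}{T}{F}{x}{y}=\distgtf{G}{T}{F'}{x}{y}=-1$.

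For the backward direction, suppose $\distgtf{G}{T}{F}{x}{y}=-1$, and let $P$ be an $x$--$y$ path attaining $w_F(P)=-1$. If $P$ is just the edge $xy$, then $w_F(xy)=-1$ forces $xy\in F$ and we are done with $F$ itself. Otherwise $P$ has length at least $2$ and, being a path, does not contain the edge $xy$, so $C$ defined by $E(C)=E(P)\cup\{xy\}$ is a circuit. Applying Lemma~\ref{lem:minjoin} to $F$ and $C$ gives $w_F(xy)\ge 1$; thus $xy\notin F$, $w_F(xy)=+1$, and $w_F(C)=0$. Now Lemma~\ref{lem:alt} yields a new minimum join $F':=F\mathbin{\Delta}E(C)$, and since $xy\in E(C)\setminus F$ we have $xy\in F'$, so $xy$ is allowed.

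There is no real obstacle here; the only point requiring small care is the bookkeeping in the sign convention (edges of $F$ have weight $-1$) and the verification that the single-edge path is itself a candidate in the backward direction, handled by the case split on whether $xy\in E(P)$.
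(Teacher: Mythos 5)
Your argument is correct and follows exactly the route the paper indicates (it gives no written proof, only the remark that the lemma "can easily be confirmed from Lemma~\ref{lem:minjoin}"): the circuit $E(P)\cup\{xy\}$ observation via Lemma~\ref{lem:minjoin}, Lemma~\ref{lem:alt} to produce a minimum join containing $xy$ in the backward direction, and Proposition~\ref{prop:distcanonical} to transfer the distance between minimum joins in the forward direction. No gaps; the case split on whether the shortest path is the edge $xy$ itself is handled properly.
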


\section{Preliminary Results on Bipartite Grafts} 

In this section, we provide some preliminary results from Kita~\cite{DBLP:journals/dam/Kita22, kita2022tight} 
to be used for deriving the results in this paper.

\begin{definition} 
Let $(G, T)$ be a graft, and let $F$ be a minimum join. 
A set $X \subseteq V(G)$ is {\em extreme} if $\distgtf{G}{T}{F}{x}{y} \ge 0$ for every $x, y \in V(G)$. 
\end{definition}

\begin{lemma}[Kita~\cite{kita2022tight}] \label{lem:ak2extreme} 
Let $(G, T)$ be a bipartite graft, and let $r \in V(K)$. 
Let $K \in \laycompall{G}{K}{r}$, and let $i \in \interval{G}{T}{r}$ be such that $K \in \laycompgtr{G}{T}{r}{i}$. 
Then, $V(K) \cap \levelgtr{G}{T}{r}{i}$ is an extreme set in $(G, T)$. 
\end{lemma}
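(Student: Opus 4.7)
The plan is to argue by contradiction, combining the symmetric-difference identity for joins with the primal-subgraft structure from Theorem~\ref{thm:sebo}~\ref{item:sebo:inpath}. Fix a minimum join $F$ of $(G,T)$ and suppose there exist $x,y \in A := V(K) \cap \levelgtr{G}{T}{r}{i}$ with $\distgtf{G}{T}{F}{x}{y} < 0$. Choose an $x$–$y$ path $P$ in $G$ realizing this with $w_F(P) < 0$; then $F_0 := F \,\Delta\, E(P)$ is a join of $(G, T \Delta \{x,y\})$ of size $|F| + w_F(P) < |F| = \nu(G,T)$.

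The first ingredient is an auxiliary $x$–$y$ path $Q$ inside $G[V(K)]$ with $w_F(Q) \le 0$. The existence of such a $Q$ is where the hypothesis $x,y \in A$ enters crucially. When $K$ is decapital, Theorem~\ref{thm:sebo}~\ref{item:sebo:inpath} gives the primal subgraft $(G,T)_F[K]$ with root $r_K$, in which $F \cap E(K)$ is a minimum join. Bipartiteness via Proposition~\ref{prop:adjdist} combined with Theorem~\ref{thm:sebo}~\ref{item:sebo:noncap} forces the $F$-antiroot $s_K$ to lie at level $i+1$ (it is adjacent to $r_K \in V(K) \subseteq \laylegtr{G}{T}{r}{i}$ yet cannot lie in $\laylegtr{G}{T}{r}{i}$ lest it belong to $V(K)$), so $r_K \in A$ and every vertex of $A$ has $F$-distance $0$ from $r_K$ inside the subgraft. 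Concatenating shortest $r_K$–$x$ and $r_K$–$y$ paths within $(G,T)_F[K]$ (both of $F$-weight zero), then extracting a simple $x$–$y$ path and pruning any remaining zero-weight circuits via Lemma~\ref{lem:alt}, yields $Q$. The capital case is handled symmetrically, with $r$ in place of $r_K$ (using Theorem~\ref{thm:sebo}~\ref{item:sebo:cap}).

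The second ingredient is the circuit decomposition: $E(P) \,\Delta\, E(Q)$ has even degree at every vertex of $G$, hence decomposes as an edge-disjoint union of circuits $C_1, \dots, C_\ell$, and Lemma~\ref{lem:minjoin} gives $\sum_j w_F(C_j) \ge 0$. Expanding yields
\[
w_F(P) + w_F(Q) - 2\, w_F\bigl(E(P) \cap E(Q)\bigr) \;\ge\; 0.
\]
Once $Q$ is edge-disjoint from $P$, the overlap term vanishes and we obtain $w_F(P) \ge -w_F(Q) \ge 0$, contradicting $w_F(P) < 0$.

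The main obstacle I anticipate lies precisely in forcing edge-disjointness — or at least non-negative $F$-weight — on the overlap $E(P) \cap E(Q)$. The natural route is a combinatorial surgery: for each $F$-edge of $P$ that also lies on $Q$, reroute $Q$ through an alternative path inside $V(K)$ (enabled by the connectedness of $K$ in $G[\laylegtr{G}{T}{r}{i}]$), applying Lemma~\ref{lem:alt} to trade along zero-weight circuits so as to preserve $w_F(Q) \le 0$ throughout. A secondary subtlety is the capital case at the top level, where $K$ can coincide with a large portion of the ambient graph and Theorem~\ref{thm:sebo}~\ref{item:sebo:inpath} does not directly apply; here one can either rerun the argument within the capital subgraft or, more directly, observe via Theorem~\ref{thm:tkl} that all of $A$ lies in a single Kotzig–Lov\'asz class and so $\distgt{G}{T}{x}{y} = 0$ for $x,y \in A$.
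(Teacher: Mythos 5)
The paper does not prove this lemma at all (it is imported from the first paper of the series), so your argument has to stand on its own; as written it does not, for two reasons. First, the construction of the auxiliary path $Q$ is already gapped. From Theorem~\ref{thm:sebo}~\ref{item:sebo:inpath} you correctly get weight-$0$ paths $Q_x$ (from $r_K$ to $x$) and $Q_y$ (from $r_K$ to $y$) inside $K$, but concatenating them and ``extracting a simple $x$--$y$ path'' does not preserve nonpositive weight: if $Q_x$ and $Q_y$ share an edge of $F$, the discarded closed walks traverse that edge twice and have negative $F$-weight, so the surviving simple path can have strictly positive weight (in the symmetric-difference bookkeeping, $w_F(Q)\le w_F(Q_x)+w_F(Q_y)-2\,w_F\bigl(E(Q_x)\cap E(Q_y)\bigr)$, and the last term can be positive). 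Lemma~\ref{lem:alt} is not the right tool here --- it lets you exchange one minimum join for another along a zero-weight circuit, it says nothing about the weight of a walk after pruning. So the very existence of an $x$--$y$ path of nonpositive $F$-weight inside $K$ for arbitrary $x,y\in \ak{G}{T}{K}$ is left unproved, and proving it is essentially a statement of the same depth as the lemma itself (note that Lemma~\ref{lem:spineroot} applied inside the primal subgraft gives the opposite inequality, distances between level-$0$ vertices are $\ge 0$ there, not $\le 0$).

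Second, the step you yourself flag as ``the main obstacle'' --- forcing $E(P)\cap E(Q)$ to carry nonnegative weight --- is precisely where the entire difficulty of extremeness lies, and the proposed surgery does not resolve it: connectedness of $K$ guarantees alternative routes around an $F$-edge of $P$, but gives no control whatsoever on their $F$-weight, and trading along zero-weight circuits via Lemma~\ref{lem:alt} changes the join and hence changes $w_F(P)$ as well, so the hypothesis $w_F(P)<0$ is not preserved. Without this step the circuit decomposition of $E(P)\,\Delta\,E(Q)$ yields only $w_F(P)+w_F(Q)\ge 2\,w_F\bigl(E(P)\cap E(Q)\bigr)$, which gives no contradiction when the shared edges lie in $F$. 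Finally, the fallback for the capital case is incorrect: Theorem~\ref{thm:tkl} only states that $\gtsim{G}{T}$ is an equivalence relation; it does not place all of $V(K)\cap \levelgtr{G}{T}{r}{i}$ in a single class, and in general this set meets several classes (compare Theorem~\ref{thm:ak2part}~\ref{item:ak2part:part}, where $\ak{G}{T}{K}$ is partitioned into possibly many members of $\tpart{G}{T}$), so one cannot conclude $\distgt{G}{T}{x}{y}=0$ that way. Also, the opening observation that $F\Delta E(P)$ is a smaller join of $(G,T\Delta\{x,y\})$ is never used; the argument that would need it (via Proposition~\ref{prop:distcanonical}) is not developed.
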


\begin{lemma}[Kita~\cite{kita2022tight}]  \label{lem:spineroot} 
Let $(G, T)$ be a primal bipartite graft with respect to $r \in V(G)$.   
Let $F$ be a minimum join of $(G, T)$. 
Then, for every $r' \in \agtr{G}{T}{r}$ and every $x \in V(G)$,  
$\distgtf{G}{T}{F}{r}{x} \le \distgtf{G}{T}{F}{r'}{x}$. 
\end{lemma}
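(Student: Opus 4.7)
I would build an $r$-to-$x$ path of $F$-weight at most $\distgtf{G}{T}{F}{r'}{x}$ by taking the symmetric difference of the edge sets of a zero-weight $r$-$r'$ path and a minimum-weight $r'$-$x$ path, and then extracting a simple $r$-$x$ path from the resulting subgraph.

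Since $(G, T)$ is primal with respect to $r$ and $r' \in \agtr{G}{T}{r}$, we have $\distgtf{G}{T}{F}{r}{r'} = 0$, so some $r$-$r'$ path $Q$ satisfies $w_F(Q) = 0$; fix also an $r'$-$x$ path $P$ achieving $w_F(P) = \distgtf{G}{T}{F}{r'}{x}$. Let $H$ be the subgraph of $G$ with edge set $E(Q)\triangle E(P)$. For every vertex $v$, the degree of $v$ in $H$ has the same parity as $\deg_Q(v)+\deg_P(v)$; hence $r$ and $x$ are the only odd-degree vertices of $H$, since $r'$ is an endpoint of both $Q$ and $P$ (parity $1+1\equiv 0$) while $r$ (resp.\ $x$) is an endpoint only of $Q$ (resp.\ $P$), all other vertices being internal or absent. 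Therefore $H$ contains an $r$-$x$ path $P^*$, and $E(H)\setminus E(P^*)$ is an all-even-degree subgraph that decomposes into edge-disjoint circuits of $G$. By Lemma~\ref{lem:minjoin} each such circuit has nonnegative $F$-weight, so
\[
w_F(P^*) \;\le\; w_F(H) \;=\; w_F(Q)+w_F(P)-2\,w_F\bigl(E(Q)\cap E(P)\bigr).
\]
Provided $w_F(E(Q)\cap E(P))\ge 0$, this gives $\distgtf{G}{T}{F}{r}{x}\le w_F(P^*)\le \distgtf{G}{T}{F}{r'}{x}$, which is exactly the desired bound.

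\textbf{Main obstacle.} The delicate step is arranging $w_F(E(Q)\cap E(P))\ge 0$, since each shared $F$-edge contributes $-1$. I would choose $Q$ to minimize $|E(Q)\cap E(P)\cap F|$ over all zero-weight $r$-$r'$ paths, and then argue this minimum must be zero by an exchange argument: any shared $F$-edge $e$ would combine with an appropriate arc of $P$ and the corresponding arc of $Q$ through $e$ to form a circuit, whose $F$-weight is $0$ because the two arcs cancel at $e$; Lemma~\ref{lem:alt} together with the bipartite graft structure then permits swapping the $Q$-arc for the $P$-arc, producing another zero-weight $r$-$r'$ path with strictly fewer $F$-edges shared with $P$, contradicting the minimal choice. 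This exchange---guaranteeing that $Q$ and $P$ have no $F$-edge in common while keeping $w_F(Q)=0$---is the main technical hurdle, and is where the extremality of $\agtr{G}{T}{r}$ (Lemma~\ref{lem:ak2extreme}) and the graft-theoretic machinery from the first paper are invoked; once it is secured, the remaining shared edges lie outside $F$ and contribute $+1$ each, so $w_F(E(Q)\cap E(P))\ge 0$ and the argument concludes.
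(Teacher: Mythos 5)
The mechanical part of your argument (symmetric difference, parity of degrees, extraction of an $r$--$x$ path $P^*$ plus edge-disjoint circuits of nonnegative $F$-weight via Lemma~\ref{lem:minjoin}, and the identity $w_F(H)=w_F(Q)+w_F(P)-2w_F(E(Q)\cap E(P))$) is correct, but it nowhere uses the hypothesis that $(G,T)$ is primal with respect to $r$; the entire content of the lemma is concentrated in the step you defer, namely securing $w_F(E(Q)\cap E(P))\ge 0$. That this step cannot be routine is shown by a three-vertex example: take the path $a\,b\,c$ with $T=\{a,b\}$ and $F=\{ab\}$, and set $r=c$, $r'=a$, $x=b$. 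Then $\distgtf{G}{T}{F}{r}{r'}=0$, the unique zero-weight $r$--$r'$ path is $c\,b\,a$ and it shares the $F$-edge $ab$ with the unique shortest $r'$--$x$ path, and indeed $\distgtf{G}{T}{F}{r}{x}=1>-1=\distgtf{G}{T}{F}{r'}{x}$, while your computation only yields $w_F(P^*)\le 1$. The graft is of course not primal with respect to $c$, which is exactly the point: primality (together with the extremality of $\agtr{G}{T}{r}$) is what must exclude this configuration, and in your write-up it enters only as a promissory note.

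The exchange sketch offered to close the gap does not stand as an argument. The ``circuit'' formed by an arc of $P$ and the corresponding arc of $Q$ through a shared $F$-edge $e$ need not exist (the two arcs may meet elsewhere), and there is no justification for its $F$-weight being $0$; ``the two arcs cancel at $e$'' proves nothing. If you do find a zero-weight circuit and apply Lemma~\ref{lem:alt}, you replace $F$ by another minimum join, which changes both the meaning of ``zero-weight $r$--$r'$ path'' and the quantity $|E(Q)\cap E(P)\cap F|$ you are minimizing, so the claimed strict decrease is not established; if instead you reroute $Q$ along $P$, you must still verify that the result is a path, that its $F$-weight is $0$, and that no new shared $F$-edges appear. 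Even granting the strategy, the intermediate claim that some zero-weight $r$--$r'$ path avoids every $F$-edge of $P$ (or at least that the shared edges have nonnegative total $F$-weight) is itself a statement of essentially the same depth as the lemma, and it is precisely here that $\distgtf{G}{T}{F}{r}{y}\le 0$ for all $y$ and Lemma~\ref{lem:ak2extreme} would have to be deployed with an actual argument. Note also that the present paper states Lemma~\ref{lem:spineroot} without proof, importing it from \cite{kita2022tight}, so there is no in-paper proof to lean on; as written, your proposal reduces the lemma to an unproven claim rather than proving it. (A minor point: the parity argument requires $r\neq x$; the case $r=x$ is trivial but should be noted.)
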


\begin{theorem}[Kita~\cite{DBLP:journals/dam/Kita22, kita2022tight}]  \label{thm:distunit} 
Let $(G, T)$ be a bipartite graft, and let $r \in V(K)$.  
Let $F$ be a minimum join of $(G, T)$. 
Let $S \in \tpart{G}{T}$, and let $x_1, x_2 \in S$. 
Then, for every $y\in V(G)$, 
$\distgtf{G}{T}{F}{x_1}{y} = \distgtf{G}{T}{F}{x_2}{y}$. 
\end{theorem}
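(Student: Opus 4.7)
The plan is to derive the equality from a triangle inequality for the $F$-distance, combined with the fact that members of the same Kotzig--Lov\'asz class lie at $F$-distance zero from each other.

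First I would dispose of the trivial case $x_1 = x_2$. Otherwise, the definition of $\gtsim{G}{T}$ gives $\nu(G, T) = \nu(G, T\Delta\{x_1, x_2\})$, so Proposition~\ref{prop:distcanonical} yields $\distgtf{G}{T}{F}{x_1}{x_2} = 0$; note that $x_1$ and $x_2$ lie in the same factor-component, so the $F$-distance is realized by an actual path in $G$.

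The heart of the argument is to establish the triangle inequality $\distgtf{G}{T}{F}{a}{c} \le \distgtf{G}{T}{F}{a}{b} + \distgtf{G}{T}{F}{b}{c}$ for any three vertices $a,b,c$ connected in $G$. To prove it, let $P$ be a path from $a$ to $b$ with $w_F(P) = \distgtf{G}{T}{F}{a}{b}$ and $Q$ a path from $b$ to $c$ with $w_F(Q) = \distgtf{G}{T}{F}{b}{c}$; concatenating yields a walk $W$ from $a$ to $c$ with $w_F(W) = w_F(P) + w_F(Q)$. I would then reduce $W$ to a simple path by iteratively excising closed subwalks between repeated vertex occurrences. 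Each excised closed walk decomposes into edge-disjoint circuits, and by Lemma~\ref{lem:minjoin} each such circuit $C$ satisfies $w_F(C) \ge 0$, so the excision cannot increase the $F$-weight. The result is a path $R$ from $a$ to $c$ with $w_F(R) \le w_F(W)$, giving the desired inequality.

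Applying the triangle inequality with $(a,b,c) = (x_2, x_1, y)$ and using $\distgtf{G}{T}{F}{x_2}{x_1} = 0$ gives $\distgtf{G}{T}{F}{x_2}{y} \le \distgtf{G}{T}{F}{x_1}{y}$; the reverse inequality follows by swapping the roles of $x_1$ and $x_2$. The main obstacle is the clean handling of the walk-to-path reduction, since a general closed walk is not literally a simple circuit; this is addressed by the decomposition into edge-disjoint circuits mentioned above, each controlled by Lemma~\ref{lem:minjoin}.
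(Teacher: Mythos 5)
The decisive step in your argument---the general triangle inequality $\distgtf{G}{T}{F}{a}{c} \le \distgtf{G}{T}{F}{a}{b} + \distgtf{G}{T}{F}{b}{c}$---is false, and the gap is precisely in the walk-to-path reduction. When you concatenate the two $F$-shortest paths and excise closed subwalks, those subwalks need not decompose into circuits of $G$: an edge traversed by both paths (in particular an immediate backtrack) yields a degenerate closed walk consisting of a single edge used twice, whose $F$-weight is $2w_F(e) = -2$ when $e \in F$. Lemma~\ref{lem:minjoin} controls circuits of $G$, not such doubled edges, so the excision can strictly increase the weight. Concretely, let $G$ be the path on vertices $a, b, c, d$ with edges $ab, bc, cd$ and $T = \{a,b,c,d\}$; the unique, hence minimum, join is $F = \{ab, cd\}$. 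Then $\distgtf{G}{T}{F}{c}{b} = +1$, whereas $\distgtf{G}{T}{F}{c}{a} + \distgtf{G}{T}{F}{a}{b} = 0 + (-1) = -1$: the concatenated walk $c, b, a, b$ has weight $-1$, but the only $c$--$b$ path has weight $+1$, because the excised closed walk $b, a, b$ is the doubled $F$-edge $ab$ of weight $-2$.

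The same example shows the strategy cannot be repaired by restricting to the case you actually need, namely $\distgtf{G}{T}{F}{x_1}{x_2} = 0$: take $x_1 = a$, $x_2 = c$, $y = b$. Then $\distgtf{G}{T}{F}{x_1}{x_2} = 0$ and all three vertices are connected, yet $\distgtf{G}{T}{F}{x_1}{y} = -1 \neq +1 = \distgtf{G}{T}{F}{x_2}{y}$. This is not a counterexample to Theorem~\ref{thm:distunit}, since $a$ and $c$ lie in different factor-components and hence in different classes of $\tpart{G}{T}$; but it demonstrates that distance zero plus connectivity, which is all your proof extracts from the hypothesis $x_1, x_2 \in S$, does not imply the conclusion. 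A correct argument must exploit factor-connectivity in an essential way, for instance by rerouting a shortest $x_1$--$y$ path through zero-weight alternating exchanges along allowed paths between $x_1$ and $x_2$ (in the spirit of Lemmas~\ref{lem:alt} and~\ref{lem:allowed2dist}). Note also that the present paper only quotes this theorem from the earlier works without proof, so there is no in-paper proof to compare against; as written, your proposal has a genuine gap.
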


\begin{lemma}[Kita~\cite{kita2022tight}] \label{lem:ear2nonneg} 
Let $(G, T)$ be a graft, and let $F$ be a minimum join.  
Let $r \in V(G)$, and let $K \in \laycompall{G}{T}{r}$. 
Let $P$ be a path in $G$ whose ends are in $K$ and whose internal vertices are disjoint from $K$.
Then, $w_F(P) \ge 0$ holds.  
Moreover, $w_F(P) = 0$ holds only if $K$ is a decapital distance component,  
one end of $P$ is the $F$-root of $K$, and $P$ contains the $F$-beam of $K$.  
\end{lemma}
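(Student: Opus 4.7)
The plan is to derive $w_F(P) \ge 0$ by applying the extreme-set property of Lemma~\ref{lem:ak2extreme} to the level containing $K$, and to handle the equality case by reapplying the same extreme-set property one level higher, combined with the structure of $\parcut{G}{K} \cap F$ given by Theorem~\ref{thm:sebo}.

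Write $P = v_0 v_1 \cdots v_k$ with $v_0, v_k \in V(K)$ and $v_1, \ldots, v_{k-1} \notin V(K)$, and let $i \in \interval{G}{T}{r}$ be such that $K \in \laycompgtr{G}{T}{r}{i}$. Since $v_1 \notin V(K)$ while $v_0 \in V(K)$ and $v_0 v_1 \in E(G)$, the connectivity of $K$ inside $G[\laylegtr{G}{T}{r}{i}]$ forces $\distgtf{G}{T}{F}{r}{v_1} > i$; combined with Proposition~\ref{prop:adjdist}, this gives $\distgtf{G}{T}{F}{r}{v_0} = i$ and $\distgtf{G}{T}{F}{r}{v_1} = i+1$, and symmetrically $\distgtf{G}{T}{F}{r}{v_k} = i$, $\distgtf{G}{T}{F}{r}{v_{k-1}} = i+1$. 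Hence $v_0, v_k \in \ak{G}{T}{K}$, and by Lemma~\ref{lem:ak2extreme} this set is extreme, so $\distgtf{G}{T}{F}{v_0}{v_k} \ge 0$. Since $P$ is one path from $v_0$ to $v_k$, $w_F(P) \ge \distgtf{G}{T}{F}{v_0}{v_k} \ge 0$, settling the first assertion.

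For the equality case, suppose $w_F(P) = 0$. Let $e_1 = v_0 v_1$ and $e_2 = v_{k-1} v_k$; these are the only edges of $P$ in $\parcut{G}{K}$. Writing $w_F(P) = w_F(e_1) + w_F(P') + w_F(e_2)$ with $P' = v_1 v_2 \cdots v_{k-1}$, I first claim at least one of $e_1, e_2$ lies in $F$. Suppose otherwise: then $w_F(e_1) = w_F(e_2) = +1$, forcing $w_F(P') = -2$. But $v_1, v_{k-1}$ both sit at level $i+1$ and both are adjacent to vertices of $V(K) \subseteq \laylegtr{G}{T}{r}{i+1}$, so they belong to a common distance component $K'' \in \laycompgtr{G}{T}{r}{i+1}$; hence $v_1, v_{k-1} \in \ak{G}{T}{K''}$, and Lemma~\ref{lem:ak2extreme} applied to $K''$ yields $\distgtf{G}{T}{F}{v_1}{v_{k-1}} \ge 0$, contradicting $w_F(P') = -2$. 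Therefore at least one of $e_1, e_2$ lies in $F$; Theorem~\ref{thm:sebo}\ref{item:sebo:cap} rules out $K$ being capital, so $K$ is decapital; Theorem~\ref{thm:sebo}\ref{item:sebo:noncap} then identifies the unique $F$-edge of $\parcut{G}{K}$ as the $F$-beam of $K$, whence that beam is $e_1$ or $e_2$, $P$ contains it, and its endpoint in $V(K)$ is the $F$-root $r_K$.

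I expect the main obstacle to be the equality analysis, specifically ruling out the possibility that no $F$-edge lies on $\parcut{G}{K} \cap E(P)$. The pivotal trick is to climb one level up to $K''$ and reapply the extreme-set property there; once $v_1, v_{k-1} \in \ak{G}{T}{K''}$ is justified, the structural information from Theorem~\ref{thm:sebo} closes the case analysis automatically.
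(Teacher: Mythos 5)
You prove the lemma correctly, and since this paper only imports Lemma~\ref{lem:ear2nonneg} from~\cite{kita2022tight} without reproducing a proof, there is no in-paper argument to compare against; your derivation is self-contained relative to the preliminaries quoted here. The two-level use of Lemma~\ref{lem:ak2extreme} is sound: after pinning $v_0, v_k$ to level $i$ and $v_1, v_{k-1}$ to level $i+1$ (using Proposition~\ref{prop:adjdist} together with the fact that $K$ is a maximal connected subgraph of $G[\laylegtr{G}{T}{r}{i}]$), extremality of $\ak{G}{T}{K}$ gives $w_F(P)\ge 0$; and if $w_F(P)=0$ with both edges of $E(P)\cap\parcut{G}{K}$ outside $F$, extremality of $\ak{G}{T}{K''}$ at level $i+1$ contradicts $w_F(P')=-2$, after which Theorem~\ref{thm:sebo}~\ref{item:sebo:cap} and~\ref{item:sebo:noncap} force $K$ to be decapital and identify the $F$-edge among $e_1, e_2$ as the $F$-beam, whose end in $V(K)$ is the $F$-root and an end of $P$. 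Two caveats you should state explicitly rather than leave implicit. First, the lemma as printed says ``graft,'' while Proposition~\ref{prop:adjdist} and Lemma~\ref{lem:ak2extreme} are stated only for bipartite grafts, so your argument proves the bipartite case; that is surely the intended scope (the conclusion refers to the $F$-beam and $F$-root, notions this paper defines only for bipartite grafts through Theorem~\ref{thm:sebo}~\ref{item:sebo:noncap}), but it should be acknowledged. Second, you tacitly assume $P$ has at least one internal vertex ($k\ge 2$); this assumption is genuinely needed (a single edge of $F$ with both ends in $K$ would violate $w_F(P)\ge 0$) and matches the intended reading of an ear leaving $K$, but it merits a sentence.
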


\begin{definition} 
Let $(G, T)$ be a graft, and let  $F$ be a minimum join.  
Let $S \subseteq V(G)$.  
A set $X \subseteq V(G)\setminus S$ is  $F$-{\em negative} for $S$ 
if, for every $x\in X$, there is a path between $x$ and a vertex in $S$  
such that its $F$-weight is negative and its vertices except the end in $S$ are contained in $X$.  
\end{definition} 

\begin{remark} 
It is easily observed that the maximum $F$-negative set exists for every $S$. 
\end{remark}

\begin{proposition}[Kita~\cite{kita2022tight}]  \label{prop:negset} 
Let $(G, T)$ be a graft, and let $S \subseteq V(G)$.  
If $F_1$ and $F_2$ are two minimum joins of $(G, T)$, then the maximum $F_1$-negative set for $S$ is equal to 
the maximum $F_2$-negative set for $S$. 
\end{proposition}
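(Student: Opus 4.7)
My plan is to reduce the claim to the case where $F_1$ and $F_2$ differ by a single zero-weight circuit swap, and then to handle that case via an exchange argument on witness paths. Because $|F_1|=|F_2|$, we have $w_{F_1}(F_1\Delta F_2)=0$; since $F_1\Delta F_2$ has even degree at every vertex it decomposes edge-disjointly into circuits, and each such circuit has nonnegative $F_1$-weight by Lemma~\ref{lem:minjoin}. These weights sum to zero, so every circuit in the decomposition has $F_1$-weight exactly zero, and iterating Lemma~\ref{lem:alt} produces a chain of minimum joins from $F_1$ to $F_2$ in which consecutive terms differ by a single zero-weight circuit swap. Hence it suffices to treat the case $F'=F\Delta E(C)$ with $w_F(C)=0$.

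For this single-swap case, by symmetry it is enough to show that the maximum $F$-negative set $X$ for $S$ is $F'$-negative for $S$. Given $x\in X$ with an $F$-negative witness path $P$ from $x$ to some $s\in S$ satisfying $V(P)\setminus\{s\}\subseteq X$, form $H:=E(P)\Delta E(C)$. A parity check shows that only $x$ and $s$ have odd degree in $H$, so $H$ decomposes as a simple $x$-to-$s$ path $P^{*}$ together with edge-disjoint circuits $Z_1,\dots,Z_k$. Using $w_F(C)=0$, a direct edge-by-edge sign computation yields $w_{F'}(H)=w_F(P)$, and each $w_{F'}(Z_j)\ge 0$ by Lemma~\ref{lem:minjoin}; hence $w_{F'}(P^{*})\le w_F(P)<0$, so $P^{*}$ is an $F'$-negative $x$-to-$S$ path.

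The main obstacle is that the internal vertices of $P^{*}$ lie in $V(P)\cup V(C)$: while $V(P)\setminus\{s\}\subseteq X$, the vertices of $V(C)\setminus V(P)$ need not lie in $X$. Resolving this is the technical heart of the proof, and I expect it to require a global, simultaneous construction rather than a pointwise one. The plan is to iterate the $P\Delta C$ exchange across all $x\in X$, collect the auxiliary vertices of $V(C)$ that arise into a tentative enlargement $\widetilde X$ of $X$, and then verify that reversing the swap (exchanging back along $C$ to return to the minimum join $F$) produces $F$-negative witnesses for the newly adjoined vertices, forcing $\widetilde X$ to be $F$-negative for $S$. Maximality of $X$ then gives $\widetilde X=X$, so the internal vertices of every $P^{*}$ already lie in $X$. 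Once this closure step is in place, symmetry yields equality of the two maximum negative sets for the single swap, and iteration along the circuit decomposition of $F_1\Delta F_2$ completes the proposition.
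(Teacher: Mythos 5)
A preliminary remark: this paper does not prove Proposition~\ref{prop:negset} at all --- it is imported from~\cite{kita2022tight} --- so there is no in-paper proof to compare with, and your argument has to stand on its own. Its first two steps are fine: since $|F_1|=|F_2|$, the set $F_1\Delta F_2$ has $F_1$-weight $0$ and even degree at every vertex, so it decomposes into circuits which Lemma~\ref{lem:minjoin} forces to have $F_1$-weight $0$, and Lemma~\ref{lem:alt} yields the chain of single zero-circuit swaps; the identity $w_{F'}(E(P)\Delta E(C))=w_F(P)$ and the extraction of the path $P^*$ plus nonnegative circuits are also correct.

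The genuine gap is the closure step, and it is not just unfinished: as described it cannot be completed, because the single-swap claim you reduce to is false for arbitrary $S$. Take $G=C_4$ with vertices $a,b,c,d$ and edges $ab,bc,cd,da$, let $T=V(G)$, $S=\{a\}$, $F=\{ab,cd\}$, and let $C$ be the whole cycle, so $w_F(C)=0$ and $F'=F\Delta E(C)=\{bc,da\}$ is again a minimum join. Every path from $c$ to $a$ has weight $0$ under both joins, so $c$ belongs to no negative set for $S$ whatsoever; consequently the maximum $F$-negative set is $\{b\}$ (witness: the edge $ab$), while the maximum $F'$-negative set is $\{d\}$, since the only negative $b$--$a$ path under $F'$ (resp.\ $d$--$a$ path under $F$) is $b\,c\,d\,a$ (resp.\ $d\,c\,b\,a$), which passes through $c$. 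This is exactly the situation your plan must handle: the vertex $c\in V(C)$ picked up by $P^*$ admits no $F$-negative witness, so your tentative enlargement $\widetilde X$ is never $F$-negative and maximality of $X$ gives nothing; indeed here $X=\{b\}$ simply is not $F'$-negative. The example also shows that the proposition, read literally for arbitrary $S\subseteq V(G)$, fails, so the version proved in~\cite{kita2022tight} must carry (and this paper implicitly uses) structural hypotheses on $S$ --- in all applications here $S$ is a Kotzig--Lov\'asz class or a root fragment, cf.\ Lemmas~\ref{lem:neineicomp2nonneg} and~\ref{lem:ak2negset} --- and any correct proof has to exploit that structure (extreme sets, the distance lemmas), not merely a local exchange along zero-weight circuits.
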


\begin{definition} 
Let $(G, T)$ be a graft, and let $F$ be a minimum join.  
Let $S \subseteq V(G)$.  
Under Proposition~\ref{prop:negset},  
we refer to the maximum $F$-negative set for $S$ simply as the maximum negative set for $S$ 
and denote it by $\negset{G}{T}{S}$. 
\end{definition}

\section{Universality of Decapital Distance Components}

In this section, we establish Theorem~\ref{thm:universal}.  
We first present Lemmas~\ref{lem:path2nonroot}, \ref{lem:path2kroot}, and~\ref{lem:path2sum}, which together yield Lemma~\ref{lem:distalt}.  
From Lemma~\ref{lem:distalt}, we derive Lemmas~\ref{lem:noncap2universal} and~\ref{lem:dijoin2noncap},  
which are then used to prove Theorem~\ref{thm:universal}.  
In Corollary~\ref{cor:universal}, we determine the total number of decapital distance components in a bipartite graft, taken over all root choices.

\begin{definition} 
Let $(G, T)$ be a bipartite graft, and let $r \in V(G)$. Let $F$ be a minimum join. 
Let $K \in \noncapall{G}{T}{r}$. 
Let $r_K$ and $s_K$ be the $F$-root and -antiroot of $K$, respectively. 
We say that a vertex $r' \in V(G)$  is {\em $K$-congruent} to $r$ if 
 $\distgtf{G}{T}{F}{r'}{r_K} = \distgtf{G}{T}{F}{r'}{s_K}  -1$. 
\end{definition}

\begin{lemma}  \label{lem:path2nonroot} 
Let $(G, T)$ be a bipartite graft and $F \subseteq E(G)$ be a minimum join. Let $r \in V(G)$. 
Let $K \in \noncapall{G}{T}{r}$. Let $e \in \parcut{G}{K}$ be the $F$-beam of $K$, 
and let $r_K$ and $s_K$ be the $F$-root and -antiroot of $K$, respectively. 
Let $r' \in V(G)\setminus V(K)$. 
Let $P$ be a path between $r'$ and a vertex $x \in V(K)$ such that $V(P)\cap V(K) = \{x\}$.  
Then $w_F(P) \ge \distgtf{G}{T}{F}{r'}{r_K}$.  
Furthermore, if $r'$ is $K$-congruent to $r$,  
then $w_F(P) = \distgtf{G}{T}{F}{r'}{r_K}$ holds only if $x = r_K$.   
\end{lemma}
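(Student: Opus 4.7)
My plan is to extend the path $P$ with an auxiliary path $Q$ inside $K$ (and, for the second part, with the beam $e$) to form walks without repeated edges, and then invoke the standard fact that any walk between $u$ and $v$ without repeated edges has $F$-weight at least $\distgt{G}{T}{u}{v}$. This latter fact is a direct consequence of Lemma~\ref{lem:minjoin}: the edge set of such a walk induces a subgraph in which exactly $u$ and $v$ have odd degree, so it decomposes into a path from $u$ to $v$ together with edge-disjoint circuits, each of non-negative $F$-weight.

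First I would apply Theorem~\ref{thm:sebo}~\ref{item:sebo:inpath} to the decapital component $K$ with its $F$-root $r_K$ to obtain a path $Q$ in $K$ between $x$ and $r_K$ with $w_F(Q) = \distgt{G}{T}{r}{x} - i_K$, where $i_K := \distgtf{G}{T}{F}{r}{r_K}$; since $(G, T)_F[K]$ is primal with respect to $r_K$, this weight is non-positive. Concatenating $P$ and $Q$ produces a walk $W$ from $r'$ to $r_K$, and since the edges of $P$ have at most the single vertex $x$ in $V(K)$ while the edges of $Q$ lie entirely in $V(K)$, the walk $W$ has no repeated edges. The decomposition argument above then yields $w_F(P) + w_F(Q) = w_F(W) \ge \distgtf{G}{T}{F}{r'}{r_K}$, and combining this with $w_F(Q) \le 0$ gives the first inequality.

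For the second assertion I would argue by contraposition: assume $r'$ is $K$-congruent to $r$ and $x \neq r_K$, and aim to show $w_F(P) > \distgtf{G}{T}{F}{r'}{r_K}$. Since $V(P) \cap V(K) = \{x\}$ and $x \neq r_K$, the vertex $r_K$ is not on $P$, so $e \notin E(P)$; likewise $e \notin E(Q)$ because $s_K \notin V(K) \supseteq V(Q)$. Thus $W' := P + Q + e$ is a walk from $r'$ to $s_K$ without repeated edges, giving $w_F(P) + w_F(Q) + w_F(e) \ge \distgtf{G}{T}{F}{r'}{s_K}$. Substituting $w_F(e) = -1$, $w_F(Q) \le 0$, and the $K$-congruence identity $\distgtf{G}{T}{F}{r'}{s_K} = \distgtf{G}{T}{F}{r'}{r_K} + 1$, one obtains $w_F(P) \ge \distgtf{G}{T}{F}{r'}{r_K} + 2$, which strictly exceeds $\distgtf{G}{T}{F}{r'}{r_K}$ as required.

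The main technical point to watch is the edge-disjointness underlying each walk-to-path reduction; both disjointness conditions ultimately follow from the hypothesis $V(P) \cap V(K) = \{x\}$ together with, for the second inequality, the contrapositive assumption $x \neq r_K$, which is precisely what forces $e$ to be absent from $E(P) \cup E(Q)$ and hence makes the beam available to boost the lower bound from $\distgtf{G}{T}{F}{r'}{r_K}$ to $\distgtf{G}{T}{F}{r'}{r_K} + 2$.
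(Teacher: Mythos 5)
Your proof is correct, and it follows the same skeleton as the paper's: extend $P$ by a path $Q$ inside $K$ from $x$ to $r_K$ supplied by Theorem~\ref{thm:sebo}~\ref{item:sebo:inpath}, and, for the second statement, attach the beam $e$ and play the $K$-congruence identity $\distgtf{G}{T}{F}{r'}{s_K} = \distgtf{G}{T}{F}{r'}{r_K}+1$ against $w_F(e)=-1$. The genuine difference is in how the second statement is handled: the paper splits into the cases $s_K\notin V(P)$ (where $P+Q+e$ is literally a path) and $s_K\in V(P)$ (where it extracts the circuit $s_KPx+Q+e$ and applies Lemma~\ref{lem:minjoin} directly), whereas you absorb both cases into the single observation that $P+Q+e$ is a walk with no repeated edges, together with the standard fact that such a walk has $F$-weight at least the $F$-distance between its ends, since its edge set decomposes into a path plus circuits of nonnegative weight --- which is again Lemma~\ref{lem:minjoin}, so the same engine drives both arguments. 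Your route buys a uniform bound $w_F(P)\ge \distgtf{G}{T}{F}{r'}{r_K}+2$ with no case analysis, at the cost of invoking the walk-decomposition fact that the paper never states (and note the degenerate possibility $r'=s_K$, where the edge set has no odd-degree vertices and decomposes into circuits alone; the bound still follows since $\distgtf{G}{T}{F}{r'}{s_K}=0$, but your phrasing ``exactly $u$ and $v$ have odd degree'' should be adjusted to cover it). A second, minor deviation: the paper first observes $x\in\ak{G}{T}{K}$ and takes $w_F(Q)=0$ exactly, while you only use $w_F(Q)\le 0$ from primality of $(G,T)_F[K]$ with respect to $r_K$; this suffices for both inequalities and avoids one small step.
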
 
\begin{proof} The definition of $x$ implies $x\in\ak{G}{T}{K}$. 
By Theorem~\ref{thm:sebo} \ref{item:sebo:inpath}, there is a path $Q$ in $K$ between $r_K$ and $x$ such that $w_F(Q) = 0$.  
Let $R := P + Q$. 
Then $R$ is a path between $r'$ and $r_K$ with $w_F(R) = w_F(P)$
In addition, $w_F(R) \ge \distgtf{G}{T}{F}{r'}{r_K}$ clearly holds. 
Thus, $w_F(P) \ge \distgtf{G}{T}{F}{r'}{r_K}$.  This proves the first statement. 

To prove the second statement, let $\distgtf{G}{T}{F}{r'}{r_K} = \distgtf{G}{T}{F}{r'}{s_K} - 1$ 
and  $x \neq r_K$. 
First, consider the case $s_K \not\in V(P)$. 
Then $R + e$ is a path between $r'$ and $s_K$ with $w_F(R + e) = w_F(P) -1$.  
Additionally, with the assumption on $r'$,   $w_F(R + e) \ge \distgtf{G}{T}{F}{r'}{s_K} = \distgtf{G}{T}{F}{r'}{r_K} + 1$. 
It follows that $w_F(P) - 1  \ge \distgtf{G}{T}{F}{r'}{r_K} +1$. Thus, $w_F(P) > \distgtf{G}{T}{F}{r'}{r_K}$. 

Next, consider the case $s_K \in V(P)$. 
Then $s_KPx + Q + e$ is a circuit with $w_F(s_KPx + Q + e) = w_F(s_KPx) -1$; 
hence, we obtain $w_F(s_KPx) \ge 1$ from Lemma~\ref{lem:minjoin}.  
Accordingly, 
$w_F(P) = w_F(r'Ps_K) + w_F(s_KPx) \ge w_F(r'Ps_K) + 1$. 
Because $r'Ps_K + e$ is a path between $r'$ and $r_K$, 
we also have $w_F(r'Ps_K + e) = w_F(r'Ps_K) - 1 \ge \distgtf{G}{T}{F}{r'}{r_K}$.  
Therefore, we obtain $w_F(P) > \distgtf{G}{T}{F}{r'}{r_K}$.   
The lemma is proved. 
\end{proof}

Lemma~\ref{lem:path2nonroot}, together with Lemma~\ref{lem:ak2extreme}, implies Lemma~\ref{lem:path2kroot}.

\begin{lemma} \label{lem:path2kroot}  
Let $(G, T)$ be a bipartite graft and $F \subseteq E(G)$ be a minimum join. Let $r \in V(G)$. 
Let $K \in \noncapall{G}{T}{r}$. Let $e \in \parcut{G}{K}$ be the $F$-beam of $K$, 
and let $r_K$ and $s_K$ be the $F$-root and -antiroot of $K$, respectively. 
Let $r' \in V(G)\setminus V(K)$  be a vertex that is $K$-congruent to $r$. 
Then, every $F$-shortest path between $r'$ and $r_K$ contains $e$ and is disjoint from $V(K)$ except for the end $r_K$. 
\end{lemma}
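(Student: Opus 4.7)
My plan is to take an arbitrary $F$-shortest path $P$ between $r'$ and $r_K$, let $x$ be the first vertex of $P$ (starting from $r'$) that lies in $V(K)$, and split $P$ into the prefix $r'Px$ and the suffix $xPr_K$. Lemma~\ref{lem:path2nonroot} applied to $r'Px$ yields $w_F(r'Px) \ge \distgtf{G}{T}{F}{r'}{r_K}$, with strict inequality whenever $x \ne r_K$ (this uses the $K$-congruence of $r'$). To control the suffix, I first verify that $x \in \ak{G}{T}{K}$: because $r' \notin V(K)$, the vertex $x$ has a predecessor $u$ on $P$ lying outside $V(K)$, and Proposition~\ref{prop:adjdist} forces the $F$-distances of $u$ and $x$ from $r$ to differ by one; since two adjacent vertices at the same level would lie in the same component of the induced subgraph on $\laylegtr{G}{T}{r}{i}$ (where $i$ is the level associated with $K$), the predecessor $u$ must sit one level above, and hence $x$ occupies the top level of $K$. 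Since $r_K \in \ak{G}{T}{K}$ as well and $\ak{G}{T}{K}$ is extreme by Lemma~\ref{lem:ak2extreme}, the suffix satisfies $w_F(xPr_K) \ge \distgtf{G}{T}{F}{x}{r_K} \ge 0$. Combining these bounds forces $x = r_K$; otherwise $w_F(P) > \distgtf{G}{T}{F}{r'}{r_K}$ would contradict the minimality of $P$. This gives $V(P) \cap V(K) = \{r_K\}$.

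For the remaining claim that $e \in E(P)$, let $u$ be the predecessor of $r_K$ on $P$, so $u \in V(G) \setminus V(K)$ and $ur_K \in \parcut{G}{K}$. Assume for contradiction that $ur_K \ne e$. Then Theorem~\ref{thm:sebo}~\ref{item:sebo:noncap} gives $ur_K \notin F$, hence $w_F(ur_K) = +1$ and $w_F(r'Pu) = \distgtf{G}{T}{F}{r'}{r_K} - 1$. I split into two sub-cases. If $s_K \notin V(r'Pu)$, concatenating $r'Pu$ with $ur_K$ and $e$ produces a path from $r'$ to $s_K$ of $F$-weight $\distgtf{G}{T}{F}{r'}{r_K} - 1$, contradicting the $K$-congruence identity $\distgtf{G}{T}{F}{r'}{s_K} = \distgtf{G}{T}{F}{r'}{r_K} + 1$. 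If instead $s_K \in V(r'Pu)$, then $s_K P u + ur_K + e$ is a circuit, so Lemma~\ref{lem:minjoin} yields $w_F(s_K P u) \ge 0$; consequently $w_F(P) \ge w_F(r'Ps_K) + 1 \ge \distgtf{G}{T}{F}{r'}{s_K} + 1 = \distgtf{G}{T}{F}{r'}{r_K} + 2$, again contradicting that $P$ is shortest.

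The main obstacle is the first half: bounding the weight of the portion of $P$ \emph{after} its first entry into $V(K)$, since this portion may leave and re-enter $V(K)$ several times before terminating at $r_K$. The key observation is that both endpoints of this suffix belong to the top level $\ak{G}{T}{K}$, which Lemma~\ref{lem:ak2extreme} guarantees is an extreme set; once nonnegativity of its $F$-weight is established, Lemma~\ref{lem:path2nonroot} pins down $x = r_K$. The second half is then mostly bookkeeping: the only alternative crossing edge $ur_K \ne e$ either delivers a too-short $r'$-to-$s_K$ path or, together with the circuit bound in Lemma~\ref{lem:minjoin}, makes $P$ itself too heavy, contradicting $K$-congruence in either case.
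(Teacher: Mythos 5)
Your proof is correct and relies on essentially the same ingredients as the paper's: Lemma~\ref{lem:path2nonroot} (whose second statement already covers both positions of $s_K$, so no prior knowledge of the beam is needed) together with the extremeness of $\ak{G}{T}{K}$ from Lemma~\ref{lem:ak2extreme} for the disjointness part, and a $K$-congruence/negative-circuit argument via Lemma~\ref{lem:minjoin} for $e \in E(P)$. The only real difference is organizational: the paper first proves $e \in E(P)$ and then disjointness, whereas you prove disjointness first and then obtain $e \in E(P)$ by identifying the last edge of $P$ as an edge of $\parcut{G}{K}$ and invoking Theorem~\ref{thm:sebo}~\ref{item:sebo:noncap}; both orderings are valid, so this is a harmless reshuffling rather than a different method.
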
 
\begin{proof} Let $P$ be an $F$-shortest path between $r'$ and $r_K$. 
The assumption on $r'$ implies the following claim. 
\begin{pclaim}  \label{claim:last} 
$e\in E(P)$. 
\end{pclaim} 
\begin{proof}  If $s_K \not\in V(P)$ holds, then $P + e$ is a path between $r'$ and $s_K$ with smaller $F$-weight than $P$. 
This contradicts the assumption on $r'$. 
Thus, $s_K\in V(P)$ follows. 
Then, $r'Ps_K + e$ is a path between $r'$ and $r_K$ whose $F$-weight is $w_F(r'Ps_K) -1$. 
As $P$ is an $F$-shortest path between $r'$ and $r_K$, we have $w_F(s_KPr_K) \le -1$. 
If $e\not\in P$ holds, then $s_KPr_K + e$ is a circuit of negative $F$-weight. This contradicts Lemma~\ref{lem:minjoin}. 
Thus, $e\in E(P)$ is obtained. 
\end{proof} 

Suppose  $V(P)\cap V(K) \neq \{ r_K\}$.   
Trace $P$ from $r'$, and let $x$ be the first encountered vertex in $V(K)$. 
The supposition and Claim~\ref{claim:last} imply $s_K \not \in V(r'Px)$. 
Thus, Lemma~\ref{lem:path2nonroot} implies $w_F(r'Px) > \distgtf{G}{T}{F}{r'}{r_K}$. 
Because $xPr_K$ has both ends in $\ak{G}{T}{K}$, Lemma~\ref{lem:ak2extreme} implies  $w_F(xPr_K) \ge 0$. 
Therefore, $w_F(P) > \distgtf{G}{T}{F}{r'}{r_K}$, 
which contradicts the definition of $P$. 
Thus, $V(P)\cap V(K) = \{ r_K\}$ follows.    
The lemma is proved. 
\end{proof}

Lemmas~\ref{lem:path2nonroot} and~\ref{lem:path2kroot}, in combination with Lemmas~\ref{lem:ak2extreme}, \ref{lem:spineroot}, and~\ref{lem:ear2nonneg}, imply Lemma~\ref{lem:path2sum}.

\begin{lemma}  \label{lem:path2sum} 
Let $(G, T)$ be a bipartite graft, let $r \in V(G)$, and let $F \subseteq E(G)$ be a minimum join. 
Let $K \in \noncapall{G}{T}{r}$. Let $e \in \parcut{G}{K}$ be the $F$-beam of $K$, 
and let $r_K$ and $s_K$ be the $F$-root and -antiroot of $K$, respectively. 
Let $r' \in V(G)\setminus V(K)$  be a vertex that is $K$-congruent to $r$. 
Let $x\in V(K)$. 
Then a path between $r'$ and $x$ is $F$-shortest between $r'$ and $x$ if and only if 
it is the sum of an $F$-shortest path between $r'$ and $r_K$ 
and an $F$-shortest path in $(G, T)_F[K]$ between $r_K$ and $x$. 
\end{lemma}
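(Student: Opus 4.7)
The plan is to establish both directions simultaneously by proving the sharp lower bound $w_F(P)\ge\distgtf{G}{T}{F}{r'}{r_K}+d$ for every $r'$-$x$ path $P$ in $G$, where $d$ denotes the $F$-distance in $(G, T)_F[K]$ between $r_K$ and $x$, and then reading off the equality case. The ``if'' direction will follow at once: concatenating the two indicated $F$-shortest paths produces a path that realizes the bound, because Lemma~\ref{lem:path2kroot} guarantees that any $F$-shortest $r'$-$r_K$ path $P_1$ satisfies $V(P_1)\cap V(K)=\{r_K\}$, so the concatenation is simple and its $F$-weight is additive.

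To prove the lower bound, let $y$ be the first vertex of $V(K)$ traversed by an arbitrary $r'$-$x$ path $P$, and write $P=r'Py+yPx$. Lemma~\ref{lem:path2nonroot} gives $w_F(r'Py)\ge\distgtf{G}{T}{F}{r'}{r_K}$, with equality forcing $y=r_K$. For the tail $yPx$ I partition it into maximal subpaths contained in $V(K)$ and maximal ``ears'' whose internal vertices lie outside $V(K)$. Using Proposition~\ref{prop:adjdist} together with the fact that $K$ is a connected component of the subgraph induced by $\laylegtr{G}{T}{r}{i}$, every vertex of $V(K)$ on $P$ that is adjacent along $P$ to a vertex outside $V(K)$ must belong to the top layer $\ak{G}{T}{K}$, so every ear has both endpoints in $\ak{G}{T}{K}$. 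By Lemma~\ref{lem:ear2nonneg} each ear has $F$-weight at least $0$; each in-$K$ segment has $F$-weight at least the $(G, T)_F[K]$-distance between its endpoints; and by Lemma~\ref{lem:ak2extreme} combined with the remark after Theorem~\ref{thm:sebo}, any two vertices of $\ak{G}{T}{K}$ have $(G, T)_F[K]$-distance $0$. Telescoping via the triangle inequality in $(G, T)_F[K]$ yields $w_F(yPx)\ge d'$, where $d'$ is the $(G, T)_F[K]$-distance from $y$ to $x$, and Lemma~\ref{lem:spineroot} applied to the primal graft $(G, T)_F[K]$ with root $r_K$ and $y\in\ak{G}{T}{K}$ bounds $d'$ from below by $d$.

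For the ``only if'' direction, I assume $P$ is $F$-shortest so every inequality above is tight. Equality in Lemma~\ref{lem:path2nonroot} forces $y=r_K$, and the zero-weight characterization in Lemma~\ref{lem:ear2nonneg} says every ear of $yPx$ must have $r_K$ as an endpoint and contain the beam $e$. Since $r_K=y$ already appears on $P$, no such ear can exist; hence $yPx$ lies entirely inside $K$, and tightness of the in-$K$ bound then makes it an $F$-shortest $r_K$-$x$ path in $(G, T)_F[K]$, giving the desired decomposition. The main obstacle I anticipate is the bookkeeping in this ear-and-segment analysis, specifically the step that forces every $V(K)$/$V(G)\setminus V(K)$ transition vertex of $P$ to lie in $\ak{G}{T}{K}$; this is precisely what allows Lemmas~\ref{lem:ak2extreme}, \ref{lem:ear2nonneg}, and \ref{lem:spineroot} to combine cleanly with the remark following Theorem~\ref{thm:sebo} to produce a bound against the $(G, T)_F[K]$-distance from $r_K$.
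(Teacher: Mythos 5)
Your overall strategy is essentially the paper's: bound $w_F$ of an arbitrary $r'$--$x$ path from below by splitting it at transition vertices lying in $\ak{G}{T}{K}$, using Lemma~\ref{lem:path2nonroot} for the portion outside $K$, Lemma~\ref{lem:ak2extreme} and Lemma~\ref{lem:ear2nonneg} for the parts that weave around $\ak{G}{T}{K}$, and Lemma~\ref{lem:spineroot} for the part inside $K$, and then read off the equality case. Your lower bound and the ``if'' direction (concatenation is a simple path by Lemma~\ref{lem:path2kroot}, and it attains the bound) are sound; the only stylistic difference is that the paper handles the entire middle portion between the two extreme transition vertices in one stroke via extremeness, whereas you decompose it ear by ear, which is fine.

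There is, however, a genuine gap in your ``only if'' step. After forcing $y=r_K$ and concluding from the zero-weight case of Lemma~\ref{lem:ear2nonneg} that every ear of $yPx$ must have $r_K$ as an endpoint and contain the beam $e$, you dismiss such ears with ``since $r_K=y$ already appears on $P$, no such ear can exist.'' That reasoning only excludes ears whose $r_K$-endpoint would be a \emph{second} occurrence of $r_K$; it does not exclude the initial ear that begins at the very occurrence $y=r_K$, i.e.\ the case in which $P$ enters $K$ at $r_K$ and immediately leaves again through $e$ toward $s_K$ before returning to $K$. To kill that case you must use the other half of the tightness: $w_F(r'Py)=\distgtf{G}{T}{F}{r'}{r_K}$ means $r'Py$ is an $F$-shortest $r'$--$r_K$ path, so by Lemma~\ref{lem:path2kroot} it already contains $e$ (hence $s_K$); a zero-weight ear would then force the edge $e$ (equivalently the vertex $s_K$) to occur twice on the simple path $P$, a contradiction. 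This is precisely the ingredient the paper records (``$z=r_K$ and $e\in\parcut{P}{r_K}$'' via Lemmas~\ref{lem:path2nonroot} and~\ref{lem:path2kroot}) before invoking Lemma~\ref{lem:ear2nonneg}. The fix is one line and uses only lemmas you already cite, but as written the exclusion of ears does not follow.
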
 
\begin{proof}  Let $P$ be an $F$-shortest path between $r'$ and $r_K$. 
Note Theorem~\ref{thm:sebo} \ref{item:sebo:inpath} in the following,  
and let $Q$ be an $F$-shortest path  in $(G, T)_F[K]$ between $r_K$ and $x$. 
Lemma~\ref{lem:path2kroot} implies that $P + Q$ is a path between $r'$ and $r_K$.

Let $R$ be an arbitrary path between $r'$ and $x$. 
Trace $R$ from $x$, and let $y$ be the first encountered vertex that is also connected to a vertex in $V(G)\setminus V(K)$. 
Trace $R$ from $r'$, and let $z$ be the first encountered vertex in $K$. 

From Lemma~\ref{lem:spineroot}, we have $w_F(xRy) \ge w_F(Q)$. 
We also have 
$w_F(yRz) \ge 0$ by Lemma~\ref{lem:ak2extreme} and $w_F(zRr') \ge w_F(P)$ by Lemma~\ref{lem:path2nonroot}.  
It follows that $w_F(R) = w_F(xRy) + w_F(yRz) + w_F(zRr')  \ge w_F(Q) + 0 + w_F(P) = w_F(P+Q)$, and 
we obtain $w_F(R) \ge w_F(P+Q)$. 
Because this inequality holds for every path $R$ between $r'$ and $x$, it follows that $P+Q$ is an $F$-shortest path between $r'$ and $x$; 
this proves the necessity of the statement. 
The equality holds if and only if $w_F(xRy) = w_F(Q)$, $w_F(yRz) = 0$, and $w_F(zRr') = w_F(P)$.

Assume that $R$ is an $F$-shortest path between $r'$ and $x$. 
Then $w_F(xRy) = w_F(Q)$, $w_F(yRz) = 0$, and $w_F(zRr') = w_F(P)$. 
From $w_F(zRr') = w_F(P)$, Lemmas~\ref{lem:path2nonroot} and \ref{lem:path2kroot} imply $z = r_K$ and $e \in \parcut{P}{r_K}$. 
Lemma~\ref{lem:ear2nonneg} further implies $y = z$. 
Thus, $y = r_K = z$ follows, 
and $R$ is accordingly the sum of a path between $r'$ and $r_K$ and a path in $K$ between $r_K$ and $x$. 
It further follows that  $R$ is the sum of an $F$-shortest path between $r'$ and $r_K$ and a shortest path in $(G, T)_F[K]$ between $r_K$ and $x$. 
The sufficiency of the statement is proved. 
This completes the proof. 
\end{proof}

Lemmas~\ref{lem:path2nonroot}, \ref{lem:path2kroot}, and \ref{lem:path2sum} imply Lemma~\ref{lem:distalt}.

\begin{lemma} \label{lem:distalt} 
Let $(G, T)$ be a bipartite graft and $F \subseteq E(G)$ be a minimum join. Let $r \in V(G)$. 
Let $K \in \noncapall{G}{T}{r}$. Let $e \in \parcut{G}{K}$ be the $F$-beam of $K$, 
and let $r_K$ and $s_K$ be the $F$-root and -antiroot of $K$, respectively. 
Let $r' \in V(G)\setminus V(K)$ be a vertex that is $K$-congruent to $r$. 
Let $d := \distgtf{G}{T}{F}{r'}{r_K} - \distgtf{G}{T}{F}{r}{r_K}$. 
Then, the following properties hold. 
\begin{rmenum} 
\item \label{item:distsum} For every $x\in V(K)$, $\distgtf{G}{T}{F}{r'}{x} = \distgtf{G}{T}{F}{r}{x} + d$. 
\item \label{item:neidist} For every $x \in \parNei{G}{K}$, $\distgtf{G}{T}{F}{r'}{x} = \distgtf{G}{T}{F}{r'}{r_K} + 1$. 
\end{rmenum} 
\end{lemma}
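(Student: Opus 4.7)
The plan is to handle (i) by combining Lemma~\ref{lem:path2sum} with Theorem~\ref{thm:sebo}~(iii), and then to deduce (ii) from walks that end at $x$ or $s_K$ together with the $K$-congruence of $r'$.

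For (i), I fix $x \in V(K)$. Lemma~\ref{lem:path2sum} says that every $F$-shortest $r'$-to-$x$ path is the concatenation of an $F$-shortest $r'$-to-$r_K$ path with an $F$-shortest $r_K$-to-$x$ path inside $(G, T)_F[K]$. Summing $F$-weights and using Theorem~\ref{thm:sebo}~(iii), which identifies the internal $F$-distance with $\distgtf{G}{T}{F}{r}{x} - i_K$ where $i_K := \distgtf{G}{T}{F}{r}{r_K}$, gives
\[
\distgtf{G}{T}{F}{r'}{x} \;=\; \distgtf{G}{T}{F}{r'}{r_K} + \bigl(\distgtf{G}{T}{F}{r}{x} - i_K\bigr) \;=\; \distgtf{G}{T}{F}{r}{x} + d,
\]
which is exactly (i).

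For (ii), let $x \in \parNei{G}{K}$. If $x = s_K$ the claim is immediate from $K$-congruence, so I assume $x \ne s_K$ and fix a neighbor $y \in V(K)$ of $x$; then $xy \ne e$ and $w_F(xy) = 1$. The preparatory step is the level identity $\distgtf{G}{T}{F}{r}{y} = i_K$: since $V(K) \subseteq \laylegtr{G}{T}{r}{i_K}$ (by primality) and $K$ is the full component of $G[\laylegtr{G}{T}{r}{i_K}]$ containing $r_K$, the vertex $x$, being outside $V(K)$ but adjacent to $V(K)$, cannot lie in $\laylegtr{G}{T}{r}{i_K}$, so $\distgtf{G}{T}{F}{r}{x} > i_K$; Proposition~\ref{prop:adjdist} and $\distgtf{G}{T}{F}{r}{y} \le i_K$ then force $\distgtf{G}{T}{F}{r}{y} = i_K$, whence Theorem~\ref{thm:sebo}~(iii) delivers a path $Q$ in $K$ from $y$ to $r_K$ with $w_F(Q) = 0$.

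With this, the upper bound $\distgtf{G}{T}{F}{r'}{x} \le \distgtf{G}{T}{F}{r'}{r_K} + 1$ follows from the walk formed by concatenating an $F$-shortest $r'$-to-$r_K$ path, then $Q$ in reverse, then the edge $yx$: its $F$-weight is $\distgtf{G}{T}{F}{r'}{r_K} + 0 + 1$, and by Lemma~\ref{lem:minjoin} it reduces without weight increase to an $r'$-to-$x$ path. For the matching lower bound, I take any $r'$-to-$x$ path $P$ and form the walk $P + xy + Q + e$ from $r'$ to $s_K$; its $F$-weight is $w_F(P) + 1 + 0 + (-1) = w_F(P)$, so Lemma~\ref{lem:minjoin} and $K$-congruence yield $\distgtf{G}{T}{F}{r'}{r_K} + 1 = \distgtf{G}{T}{F}{r'}{s_K} \le w_F(P)$, which after taking the infimum over $P$ completes (ii). The main anticipated obstacle is the preparatory level identity, which requires verifying that $K$ is already a full component of $G[\laylegtr{G}{T}{r}{i_K}]$; once that bookkeeping is done, both bounds are routine consequences of Lemma~\ref{lem:minjoin} and the primal structure provided by Theorem~\ref{thm:sebo}~(iii).
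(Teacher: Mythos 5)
Your part~(i) and the preparatory observations for~(ii) (that for $x \in \parNei{G}{K}$ with $x \neq s_K$ every neighbour $y$ of $x$ in $K$ lies in $\ak{G}{T}{K}$, so Theorem~\ref{thm:sebo}~\ref{item:sebo:inpath} gives a zero-weight path $Q$ from $y$ to $r_K$ inside $K$) are fine and consistent with the paper. The genuine gap is in both bounds of~(ii): you pass from a \emph{walk} of $F$-weight $W$ to the conclusion $\distgtf{G}{T}{F}{r'}{\cdot} \le W$ ``by Lemma~\ref{lem:minjoin}''. That reduction is invalid. Lemma~\ref{lem:minjoin} forbids negative \emph{circuits}, not negative closed walks: traversing an edge of $F$ forth and back is a closed walk of weight $-2$, so loop-erasure can strictly increase $F$-weight, the minimum walk weight can be smaller than the minimum path weight, and in particular the $F$-distance does not obey the triangle inequality. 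Concretely, in your lower bound the concatenation $P + xy + Q + e$ need not be a path: the arbitrary $r'$--$x$ path $P$ may re-enter $V(K)$, pass through $y$, through vertices or edges of $Q$, through $s_K$, or even use the beam $e$ itself, and in those cases no $r'$--$s_K$ path of weight at most $w_F(P)$ is extracted, so the key inequality $\distgtf{G}{T}{F}{r'}{s_K} \le w_F(P)$ is unjustified. The same objection applies to your upper-bound walk when $x$ happens to lie on the chosen shortest $r'$--$r_K$ path, although that half is easy to repair: by part~(i) (or Lemma~\ref{lem:path2sum}) $\distgtf{G}{T}{F}{r'}{y} = \distgtf{G}{T}{F}{r'}{r_K}$, and Proposition~\ref{prop:adjdist} already gives $\distgtf{G}{T}{F}{r'}{x} \le \distgtf{G}{T}{F}{r'}{y} + 1$.

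So the real content of~(ii) is exactly the lower bound you tried to get for free, and it cannot be obtained by weight-counting along walks. The paper's proof takes an $F$-shortest $r'$--$x$ path $P$, notes $w_F(P) \in \{i+d-1,\, i+d+1\}$ by Proposition~\ref{prop:adjdist}, and rules out $i+d-1$ by a case analysis on whether $y \in V(P)$: if $y \notin V(P)$, then $P + xy$ would be a shortest $r'$--$y$ path contradicting Lemma~\ref{lem:path2kroot} (for $y = r_K$, since $xy \notin F$) or Lemma~\ref{lem:path2sum} (for $y \neq r_K$); if $y \in V(P)$, it uses Lemma~\ref{lem:path2nonroot}, Lemma~\ref{lem:path2kroot}, and the zero-weight circuit $yPx + xy$ together with Lemma~\ref{lem:alt} and Theorem~\ref{thm:sebo}~\ref{item:sebo:noncap} (the exchanged join would meet $\parcut{G}{K}$ in more than one edge). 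Some argument of this kind, controlling how a shortest $r'$--$x$ path can meet $K$, is needed to close your proof; as written, the lower bound does not follow.
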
 
\begin{proof} Lemma~\ref{lem:path2sum} easily implies \ref{item:distsum}. 
We prove \ref{item:neidist} in the following. 
For every $x \in \parNei{G}{K}$, and let $y \in \ak{G}{T}{K}$ be a vertex with $xy\in E(G)$. 
If $xy \in F$ holds, then Theorem~\ref{thm:sebo}~\ref{item:sebo:noncap} implies $x = s_K$ and $y = r_K$; the statement clearly holds. 
We assume $xy \not\in F$ in the following. 
Let $P$ be an $F$-shortest path between $r'$ and $x$. Let $i := \distgtf{G}{T}{F}{r}{r_K}$. 
As $y \in \ak{G}{T}{K}$ holds, Lemma~\ref{lem:path2sum} implies $\distgtf{G}{T}{F}{r'}{y} = i + d$. 
Proposition~\ref{prop:adjdist} implies $w_F(P) \in \{i + d + 1, i + d -1 \}$. 

First, consider the case $y\not\in V(P)$. 
Then, $P + xy$ is a path between $r'$ and $y$. 
Suppose $w_F(P) = i + d -1$. 
Then $w_F(P + xy) = i + d$; that is, $P + xy$ is an $F$-shortest path between $r'$ and $y$. 
If $y = r_K$, then this contradicts Lemma~\ref{lem:path2kroot}, because $xy \not\in F$ is assumed.  
If $y \neq r_K$, then it contradicts Lemma~\ref{lem:path2sum}.  
Hence, we obtain $w_F(P) = i + d + 1$ for this case.

Next, consider the case $y\in V(P)$. 
We have $w_F(P) = w_F(r'Py) + w_F(yPx)$.  
Lemma~\ref{lem:path2nonroot} implies $w_F(r'Py)  \ge i + d$.  
Proposition~\ref{prop:adjdist} implies $w_F(yPx) \ge -1$. 
\begin{pclaim} \label{claim:minimum} 
$w_F(r'Py) \neq i + d$ or $w_F(yPx) \neq -1$.  
\end{pclaim} 
\begin{proof} 
Suppose $w_F(r'Py) = i + d$ and $w_F(yPx) = -1$. 
Then $r'Py$ is an $F$-shortest path between $r'$ and $y$. 
Lemma~\ref{lem:path2kroot} implies $e\in E(r'Py)$. 
Consequently, $e \not\in E(yPx)$. 

Let $C:= yPx + xy$. Then,  $C$ is a circuit with $w_F(C) = 0$.  
Lemma~\ref{lem:alt} further implies that $F\Delta E(C)$ is a minimum join of $(G, T)$ with $|\parcut{G}{K}\cap (F\Delta E(C))| > 1$. 
This contradicts Theorem~\ref{thm:sebo} \ref{item:sebo:noncap}.  
Thus, the claim is proved. 
\end{proof} 

By Claim~\ref{claim:minimum}, we obtain $w_F(P) > i + d - 1$, and thus $w_F(P) = i + d + 1$  follows. 
The proof of the lemma is complete. 
\end{proof}

Lemma~\ref{lem:distalt} implies Lemma~\ref{lem:noncap2universal}.

\begin{lemma}  \label{lem:noncap2universal} 
Let $(G, T)$ be a bipartite graft, let $r \in V(G)$, and let $F \subseteq E(G)$ be a minimum join. 
Let $K \in \noncapall{G}{T}{r}$. Let $e \in \parcut{G}{K}$ be the $F$-beam of $K$, 
and let $r_K$ and $s_K$ be the $F$-root and -antiroot of $K$, respectively. 
Let $r' \in V(G)\setminus V(K)$ be a vertex that is $K$-congruent to $r$. 
Then, $K$ is a member of  $\noncapall{G}{T}{r'}$. 
\end{lemma}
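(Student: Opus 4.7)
The plan is to apply Lemma~\ref{lem:distalt} to transfer the distance-component structure from root $r$ to root $r'$. Let $i \in \interval{G}{T}{r}$ be the index with $K \in \laycompgtr{G}{T}{r}{i}$, and let $d := \distgtf{G}{T}{F}{r'}{r_K} - \distgtf{G}{T}{F}{r}{r_K}$ be the shift introduced in Lemma~\ref{lem:distalt}. I will exhibit the integer $j := i + d$ and verify directly that $K$ is a connected component of $G[\laylegtr{G}{T}{r'}{j}]$ not containing $r'$.

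The preliminary step will be to pin down $\distgtf{G}{T}{F}{r}{r_K} = i$. Since $V(K)$ is a connected component of $G[\laylegtr{G}{T}{r}{i}]$ and $s_K \notin V(K)$ is adjacent to $r_K \in V(K)$, the vertex $s_K$ cannot lie in $\laylegtr{G}{T}{r}{i}$, so $\distgtf{G}{T}{F}{r}{s_K} > i \ge \distgtf{G}{T}{F}{r}{r_K}$. Proposition~\ref{prop:adjdist} then forces $\distgtf{G}{T}{F}{r}{r_K} = i$ and $\distgtf{G}{T}{F}{r}{s_K} = i+1$. Combined with Lemma~\ref{lem:distalt}\ref{item:distsum} applied to $r_K$, this gives $\distgtf{G}{T}{F}{r'}{r_K} = i + d = j$.

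With $j$ identified, I will check three conditions. First, Lemma~\ref{lem:distalt}\ref{item:distsum} yields $\distgtf{G}{T}{F}{r'}{x} = \distgtf{G}{T}{F}{r}{x} + d \le i + d = j$ for every $x \in V(K)$, so $V(K) \subseteq \laylegtr{G}{T}{r'}{j}$. Second, Lemma~\ref{lem:distalt}\ref{item:neidist} gives $\distgtf{G}{T}{F}{r'}{x} = \distgtf{G}{T}{F}{r'}{r_K} + 1 = j+1 > j$ for every $x \in \parNei{G}{K}$, so no vertex of $G$ outside $V(K)$ but adjacent to $V(K)$ lies in $\laylegtr{G}{T}{r'}{j}$. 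Third, $K$ is connected by virtue of being a distance component. These three facts together say that $V(K)$ is exactly the vertex set of a connected component of $G[\laylegtr{G}{T}{r'}{j}]$, so $K \in \laycompgtr{G}{T}{r'}{j} \subseteq \laycompall{G}{T}{r'}$. The hypothesis $r' \in V(G)\setminus V(K)$ then makes $K$ decapital with respect to $r'$, yielding $K \in \noncapall{G}{T}{r'}$.

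The main obstacle I anticipate is the preliminary identification $\distgtf{G}{T}{F}{r}{r_K} = i$: without pinning down this exact value, the strict inequality $j+1 > j$ for neighbors of $K$ would be in danger, and with it the maximality of $V(K)$ as a component at level $j$ from $r'$. Once the level of the $F$-root is located precisely, the remainder of the argument is a faithful translation of Lemma~\ref{lem:distalt} into the vocabulary of distance components.
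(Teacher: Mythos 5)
Your proposal is correct and follows essentially the same route as the paper: apply Lemma~\ref{lem:distalt} to get $V(K)\subseteq \laylegtr{G}{T}{r'}{i+d}$ and $\parNei{G}{K}\cap \laylegtr{G}{T}{r'}{i+d}=\emptyset$, so that $K$ is a connected component of $G[\laylegtr{G}{T}{r'}{i+d}]$ not containing $r'$. Your extra preliminary step identifying $\distgtf{G}{T}{F}{r}{r_K}=i$ is a detail the paper leaves implicit (it follows from Theorem~\ref{thm:sebo}\ref{item:sebo:inpath}), but it does not change the argument.
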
 
\begin{proof} Let $i := \distgtf{G}{T}{F}{r}{r_K}$ and let $d := \distgtf{G}{T}{F}{r'}{r_K} - \distgtf{G}{T}{F}{r}{r_K}$. 
Lemma~\ref{lem:distalt} implies  $V(K)\subseteq \laylegtr{G}{T}{r'}{i + d}$ and $\parNei{G}{K}\cap \laylegtr{G}{T}{r'}{i + d} = \emptyset$. 
Therefore, 
$K$ is also a connected component of $G[\laylegtr{G}{T}{r'}{i + d}]$.  
This proves the lemma. 
\end{proof}

Lemma~\ref{lem:dijoin2noncap} can be derived rather easily from Lemma~\ref{lem:noncap2universal}.

\begin{lemma}  \label{lem:dijoin2noncap} 
Let $(G, T)$ be a bipartite graft and $F \subseteq E(G)$ be a minimum join.  
\begin{rmenum} 
\item  \label{item:dijoin2noncap:antiroot} 
Let $r\in V(G)$  and  $K \in \noncapall{G}{T}{r}$. 
Let $r_K$ and $s_K$ be the $F$-root and -antiroot of $K$, respectively. 
Then, $K \in \laycompgtr{G}{T}{s_K}{-1}$ holds.  
\item \label{item:beam2noncap} 
For every $xy \in F$, there exists $K \in \laycompgtr{G}{T}{y}{-1}$ with $x \in V(K)$ and $y\in V(G)\setminus V(K)$. 
\end{rmenum} 
\end{lemma}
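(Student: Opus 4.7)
The plan is to treat the two parts separately: \ref{item:dijoin2noncap:antiroot} by reducing to Lemma~\ref{lem:noncap2universal} with the new root $r' := s_K$, and \ref{item:beam2noncap} by a direct distance computation with root $y$. In both cases the key fact is that an edge of $F$ has $F$-weight $-1$, so by Proposition~\ref{prop:adjdist} its endpoints sit at $F$-distance exactly $-1$ from each other once either endpoint is taken as the root.

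For \ref{item:dijoin2noncap:antiroot}, I would first verify that $s_K$ is $K$-congruent to $r$. Since the $F$-beam $s_K r_K$ lies in $F$, the single-edge path gives $\distgtf{G}{T}{F}{s_K}{r_K} \le -1$; Proposition~\ref{prop:adjdist} with root $s_K$ applied to the adjacent pair $s_K, r_K$ then forces $\distgtf{G}{T}{F}{s_K}{r_K} = -1 = \distgtf{G}{T}{F}{s_K}{s_K} - 1$, which is precisely the $K$-congruence condition. Lemma~\ref{lem:noncap2universal} applied with $r' := s_K$ then gives $K \in \noncapall{G}{T}{s_K}$. To identify the level, set $i := \distgtf{G}{T}{F}{r}{r_K}$ and $d := \distgtf{G}{T}{F}{s_K}{r_K} - i = -1 - i$; Lemma~\ref{lem:distalt}\ref{item:distsum} gives $\distgtf{G}{T}{F}{s_K}{x} = \distgtf{G}{T}{F}{r}{x} + d \le -1$ for every $x \in V(K)$ (with equality on $\ak{G}{T}{K}$), while Lemma~\ref{lem:distalt}\ref{item:neidist} gives $\distgtf{G}{T}{F}{s_K}{x} = 0$ for every $x \in \parNei{G}{K}$. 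Hence $V(K)$ is exactly a connected component of $G[\laylegtr{G}{T}{s_K}{-1}]$, i.e., $K \in \laycompgtr{G}{T}{s_K}{-1}$.

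For \ref{item:beam2noncap}, I would simply take $r := y$ as the root. Since $xy \in F$, the single-edge path between $y$ and $x$ has $F$-weight $-1$, so $\distgtf{G}{T}{F}{y}{x} \le -1$; Proposition~\ref{prop:adjdist} applied to the adjacent pair $y, x$ forces $\distgtf{G}{T}{F}{y}{x} = -1$. Letting $K$ be the connected component of $G[\laylegtr{G}{T}{y}{-1}]$ that contains $x$ immediately yields $K \in \laycompgtr{G}{T}{y}{-1}$ with $x \in V(K)$, and $y \notin V(K)$ because $\distgtf{G}{T}{F}{y}{y} = 0 > -1$.

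The only real obstacle is notational bookkeeping: one must keep straight which of $r, r', s_K, r_K, y$ plays the role of the new root in each invocation, and track the index shift $d = -1 - i$ through Lemma~\ref{lem:distalt} to land precisely at level $-1$. No new combinatorial insight beyond the identification of the beam's $F$-weight with $-1$ appears to be required.
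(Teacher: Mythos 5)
Your proposal is correct and takes essentially the same route as the paper: for \ref{item:dijoin2noncap:antiroot} you verify via Proposition~\ref{prop:adjdist} that $s_K$ is $K$-congruent to $r$ and then invoke Lemma~\ref{lem:noncap2universal} (using the distance shift from Lemma~\ref{lem:distalt} to pin the level at $-1$), which is exactly the paper's argument. Your direct root-$y$ computation for \ref{item:beam2noncap} is the same one-line observation the paper compresses into ``easily follows,'' so there is no gap.
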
 
\begin{proof} Proposition~\ref{prop:adjdist} implies that $r_K$ is contained in a member $K'$ of $\laycompgtr{G}{T}{s_K}{-1}$. 
Lemma~\ref{lem:noncap2universal} implies $K = K'$. 
Thus, \ref{item:dijoin2noncap:antiroot} follows. 
The statement \ref{item:beam2noncap} easily follows from \ref{item:dijoin2noncap:antiroot}. 
\end{proof}

Theorem~\ref{thm:universal} can now be obtained from Lemma~\ref{lem:noncap2universal} and Lemma~\ref{lem:dijoin2noncap}~\ref{item:beam2noncap}.

\begin{theorem}  \label{thm:universal}  
Let $(G, T)$ be a bipartite graft, and let $F \subseteq E(G)$ be a minimum join. 
For every $xy\in F$, 
there uniquely exists $K \in \bigcup_{r\in V(G)} \noncapall{G}{T}{r}$ 
such that $x$ and $y$ are the $F$-root and -antiroot of $K$, respectively. 
\end{theorem}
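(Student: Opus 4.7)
The plan is to split the argument into existence and uniqueness, and to observe that the key is to pick the antiroot itself as the auxiliary root so that the universality lemma (Lemma~\ref{lem:noncap2universal}) can be applied.

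For existence, given $xy \in F$, I would simply invoke Lemma~\ref{lem:dijoin2noncap}~\ref{item:beam2noncap} with root $y$: it yields a connected component $K \in \laycompgtr{G}{T}{y}{-1}$ with $x \in V(K)$ and $y \in V(G) \setminus V(K)$. Since $xy \in F \cap \parcut{G}{K}$, Theorem~\ref{thm:sebo}~\ref{item:sebo:noncap} forces $xy$ to be the (unique) $F$-beam of $K$, so $x$ is its $F$-root and $y$ its $F$-antiroot, as required.

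For uniqueness, I would take any $r' \in V(G)$ and any $K' \in \noncapall{G}{T}{r'}$ whose $F$-root and $F$-antiroot are $x$ and $y$, and show $K' = K$. The crucial observation is that $y$ is $K'$-congruent to $r'$: since $xy \in F$, Lemma~\ref{lem:allowed2dist} (or directly the sign of $w_F(xy)$) gives $\distgtf{G}{T}{F}{y}{x} = -1 = \distgtf{G}{T}{F}{y}{y} - 1$, which matches the definition of $K'$-congruence with $r_{K'}=x$ and $s_{K'}=y$. Lemma~\ref{lem:noncap2universal} then transports $K'$ into $\noncapall{G}{T}{y}$, still with $F$-antiroot $y$. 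Applying Lemma~\ref{lem:dijoin2noncap}~\ref{item:dijoin2noncap:antiroot} to $K'$ now places it in $\laycompgtr{G}{T}{y}{-1}$, i.e.\ as a connected component of $G[\laylegtr{G}{T}{y}{-1}]$. Since $K$ from the existence step is also a connected component of the same induced subgraph and both contain $x$, they must coincide.

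I expect no real obstacle in this argument; everything is assembled from results already proved. The only subtle point is recognizing the correct choice of auxiliary root (the edge endpoint $y$), which makes congruence free and puts both candidate components into the same decomposition $\laycompgtr{G}{T}{y}{-1}$, where connectivity of $G[\laylegtr{G}{T}{y}{-1}]$ takes care of the rest. A corollary counting decapital distance components would then follow by noting that the correspondence $K \mapsto (r_K,s_K)$ maps $\bigcup_r \noncapall{G}{T}{r}$ bijectively onto ordered edges of $F$, once one checks that swapping $r$ for the antiroot gives the other orientation.
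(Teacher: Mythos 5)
Your proof is correct and takes essentially the same route as the paper: existence via Lemma~\ref{lem:dijoin2noncap}~\ref{item:beam2noncap}, and uniqueness by verifying a $K'$-congruence condition and invoking Lemma~\ref{lem:noncap2universal}. The only (cosmetic) difference is the direction of transport: you show $y$ is $K'$-congruent to $r'$ and move $K'$ to the root $y$ (in effect re-deriving Lemma~\ref{lem:dijoin2noncap}~\ref{item:dijoin2noncap:antiroot}, which you could have applied directly), then identify $K$ and $K'$ as components of the same level-$(-1)$ decomposition containing $x$, whereas the paper establishes the congruence $\distgtf{G}{T}{F}{r'}{x} = \distgtf{G}{T}{F}{r'}{y} - 1$ at $r'$ and applies Lemma~\ref{lem:noncap2universal} there.
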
 
\begin{proof} Lemma~\ref{lem:dijoin2noncap} \ref{item:beam2noncap} implies 
that there exists $K \in \noncapgtr{G}{T}{y}{-1}$ such that $x$ and $y$ are the $F$-root and -antiroot of $K$, respectively. 
For proving the uniqueness, let  $K' \in \noncapall{G}{T}{r'}$, where $r' \in V(G)$, be a decapital distance component for which $x$ and $y$ are the $F$-root and -antiroot, respectively. 
Proposition~\ref{prop:adjdist} implies $\distgtf{G}{T}{F}{r'}{x} = \distgtf{G}{T}{F}{r'}{y} - 1$. 
Then Lemma~\ref{lem:noncap2universal} implies $K' = K$. 
This completes the proof. 
\end{proof}

\begin{corollary}  \label{cor:universal} 
Let $(G, T)$ be a bipartite graft, and let $r\in V(G)$. 
Then, $|\noncapall{G}{T}{r}| = \nu(G, T)$. 
\end{corollary}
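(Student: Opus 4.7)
The plan is to exhibit a bijection $\phi \colon \noncapall{G}{T}{r} \to F$ between decapital distance components with respect to $r$ and edges of an arbitrary minimum join $F$, sending each $K$ to its $F$-beam. Theorem~\ref{thm:sebo}~\ref{item:sebo:noncap} ensures $\phi$ is well-defined, and since $|F| = \nu(G, T)$, the identity $|\noncapall{G}{T}{r}| = \nu(G, T)$ reduces to proving that $\phi$ is a bijection.

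For injectivity, I would observe that for any $K \in \noncapall{G}{T}{r}$ the $F$-root $r_K$ lies in $\ak{G}{T}{K}$ and so is at distance equal to the level of $K$, while by Proposition~\ref{prop:adjdist} the antiroot $s_K$ sits one step higher. Consequently the ordered pair $(r_K, s_K)$ is recoverable from the edge $xy$ by comparing $\distgtf{G}{T}{F}{r}{x}$ with $\distgtf{G}{T}{F}{r}{y}$, and the uniqueness clause of Theorem~\ref{thm:universal} then forces any two components sharing a beam to coincide.

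The substantive step is surjectivity, which is exactly where the section's universality machinery earns its keep. Given $xy \in F$, Proposition~\ref{prop:adjdist} lets me assume $\distgtf{G}{T}{F}{r}{x} = \distgtf{G}{T}{F}{r}{y} - 1$ without loss of generality. I would then apply Lemma~\ref{lem:dijoin2noncap}~\ref{item:beam2noncap} with $y$ in the role of root to produce a decapital component $K \in \noncapall{G}{T}{y}$ whose $F$-root and antiroot are $x$ and $y$ respectively; the distance equality just recorded is precisely the statement that $r$ is $K$-congruent to $y$, so Lemma~\ref{lem:noncap2universal} transports $K$ into $\noncapall{G}{T}{r}$ and exhibits $xy$ as its $F$-beam.

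The main obstacle throughout is surjectivity: a priori an edge of $F$ might fail to appear as a beam for the specific root $r$. The entire section is arranged to dissolve this worry by transferring decapital components across roots whenever a mild congruence holds, so once Lemmas~\ref{lem:noncap2universal} and~\ref{lem:dijoin2noncap} are available the corollary becomes a short finishing step.
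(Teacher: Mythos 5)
Your overall plan (a bijection between $\noncapall{G}{T}{r}$ and $F$ sending each component to its $F$-beam) is sound, and the well-definedness and injectivity parts are fine: for a fixed root $r$ the beam's orientation is determined by comparing $\distgtf{G}{T}{F}{r}{x}$ with $\distgtf{G}{T}{F}{r}{y}$, and the uniqueness clause of Theorem~\ref{thm:universal} then identifies any two components sharing a beam. (Note that the paper itself argues differently, setting up a bijection between the set of \emph{ordered} pairs arising from $F$ and $\bigcup_{\rho\in V(G)}\noncapall{G}{T}{\rho}$; your fixed-root bijection is closer to the statement actually being proved.)

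The gap is in your surjectivity step, at the application of Lemma~\ref{lem:noncap2universal}. That lemma requires the new root to lie \emph{outside} the component: its hypothesis is $r' \in V(G)\setminus V(K)$. You take $K \in \noncapall{G}{T}{y}$ with $F$-root $x$ and antiroot $y$, check that $r$ is $K$-congruent to $y$ (this part is correct), and then ``transport'' $K$ to root $r$ --- but you never verify $r \notin V(K)$. This is not automatic: $V(K)$ consists of all vertices of its component of $\laylegtr{G}{T}{y}{-1}$, and the congruence condition $\distgtf{G}{T}{F}{r}{x} = \distgtf{G}{T}{F}{r}{y}-1$ by itself does not visibly exclude $\distgtf{G}{T}{F}{y}{r} \le -1$ with $r$ in the same component as $x$. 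The exclusion is in fact true, but every proof of it I can see already does the work your detour was meant to avoid; for instance: set $i := \distgtf{G}{T}{F}{r}{x}$ and let $K'$ be the component of $G[\laylegtr{G}{T}{r}{i}]$ containing $x$. Then $y \notin V(K')$, so $xy \in \parcut{G}{K'}\cap F$, and if $r \in V(K')$ then $K'$ would be a capital distance component with an $F$-edge in its cut, contradicting Theorem~\ref{thm:sebo}~\ref{item:sebo:cap}; hence $K'$ is decapital with beam $xy$ and root $x$. This one-step argument proves surjectivity directly (and, combined with Theorem~\ref{thm:universal}, shows $K = K'$, so $r \notin V(K)$ after all), making the transport through $\noncapall{G}{T}{y}$ and Lemma~\ref{lem:noncap2universal} unnecessary. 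So either supply the missing verification of $r \notin V(K)$, or replace that part of your argument with the direct use of Theorem~\ref{thm:sebo}~\ref{item:sebo:cap} and~\ref{item:sebo:noncap}.
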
 
\begin{proof} Let $\hat{F} := \{ (u, v), (v, u) : uv \in F\}$.  
Under Lemma~\ref{lem:dijoin2noncap}~\ref{item:beam2noncap}, for each $(u, v) \in \hat{F}$, let $K_{ (u, v) }$ be a member of  $\bigcup_{r\in V(G)} \noncapall{G}{T}{r}$ with $u\in V(K)$ and $v\not\in V(K)$.  
Let $f: \hat{F}\rightarrow \bigcup_{r\in V(G)} \noncapall{G}{T}{r}$ 
be the mapping such that $f: (u, v) \mapsto K_{(u, v)}$. 
Theorem~\ref{thm:sebo} \ref{item:sebo:noncap} implies that $f$ is a surjection.  
Theorem~\ref{thm:universal} implies that $f$ is an injection. 
Thus, $f$ is a bijection between $\hat{F}$ and $\bigcup_{r\in V(G)} \noncapall{G}{T}{r}$. 
This proves the second statement. 
\end{proof}

\begin{remark} 
It is easily confirmed that 
$|\bigcup_{r\in V(G)} \noncapall{G}{T}{r}| = 2\nu(G, T)$. 
\end{remark}

\section{Root and Antiroot Classes of Decapital Distance Components}

In this section, we provide Lemma~\ref{lem:in2equiv} and use this lemma to obtain Theorem~\ref{thm:shore2equiv}. 
We then introduce the concepts of root and antiroot classes, and root fragments based on  Theorem~\ref{thm:shore2equiv}.

\begin{lemma} \label{lem:in2equiv} 
Let $(G, T)$ be a bipartite graft, let $r \in V(G)$, and let $F \subseteq E(G)$ be a minimum join. 
Let $K\in \noncapall{G}{T}{r}$. 
Then, all allowed edges in $\parcut{G}{K}$ are contained in the same factor-component of $(G, T)$.  
\end{lemma}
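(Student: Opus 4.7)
The plan is to show that every allowed edge in $\parcut{G}{K}$ is factor-connected to the $F$-beam $r_K s_K$. By Theorem~\ref{thm:sebo}~\ref{item:sebo:noncap}, the $F$-beam is the unique edge of $\parcut{G}{K}\cap F$, and it is trivially allowed since it lies in $F$. Hence for any other allowed edge $uv \in \parcut{G}{K}$ we have $uv \notin F$, and the definition of ``allowed'' supplies a minimum join $F' \subseteq E(G)$ with $uv \in F'$.

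The first key step is to observe that $K$ does not depend on the choice of minimum join: by Proposition~\ref{prop:distcanonical} the $F$-distances agree with the canonical $(G, T)$-distance, so the sets $\laylegtr{G}{T}{r}{i}$, and hence the entire family $\laycompall{G}{T}{r}$, are independent of $F$. Consequently $K$ is a decapital distance component with respect to $r$ under $F'$ as well, and applying Theorem~\ref{thm:sebo}~\ref{item:sebo:noncap} to $F'$ gives $|\parcut{G}{K} \cap F'| = 1$. Since $uv \in \parcut{G}{K} \cap F'$, this single edge must be $uv$, so in particular $r_K s_K \notin F'$. Thus $\{r_K s_K, uv\}$ is exactly the intersection $\parcut{G}{K} \cap (F \Delta F')$.

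Next I would decompose $F \Delta F'$ into circuits. Because $F$ and $F'$ are both minimum joins they impose the same vertex-degree parities, so $F \Delta F'$ is Eulerian and decomposes into edge-disjoint circuits $C_1, C_2, \ldots$. Each $C_i$ satisfies $w_F(C_i) \ge 0$ by Lemma~\ref{lem:minjoin}, while $\sum_i w_F(C_i) = w_F(F \Delta F') = 0$ because $|F|=|F'|$ forces $|F \setminus F'| = |F' \setminus F|$; hence $w_F(C_i) = 0$ for every $i$. Let $C$ be the circuit containing $uv$. Since $C$ crosses the cut $\parcut{G}{K}$ an even number of times and the only members of $F \Delta F'$ in that cut are $uv$ and $r_K s_K$, the circuit $C$ must also contain $r_K s_K$.

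To conclude, Lemma~\ref{lem:alt} applied to $C$ shows that $F \Delta E(C)$ is a minimum join; consequently every edge of $C$ lies either in $F$ or in $F \Delta E(C)$, and is therefore allowed. Thus $C$ is a circuit of allowed edges meeting both endpoints of $uv$ and both endpoints of $r_K s_K$, placing $\{u, v, r_K, s_K\}$ in a single factor-component of $(G, T)$. The most delicate point is the join-independence of the distance-component decomposition, which is precisely what legitimates applying Theorem~\ref{thm:sebo}~\ref{item:sebo:noncap} to $F'$ and concluding that the unique edge of $\parcut{G}{K} \cap F'$ is $uv$ itself; once that is in hand, the circuit-decomposition argument runs smoothly.
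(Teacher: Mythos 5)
Your proof is correct, but it takes a genuinely different route from the paper's. You pin down $F'\cap\parcut{G}{K}=\{uv\}$ by noting that the distance components are join-independent (Proposition~\ref{prop:distcanonical}) and applying Theorem~\ref{thm:sebo}~\ref{item:sebo:noncap} to $F'$, then decompose $F\Delta F'$ into edge-disjoint circuits, force each to have zero $F$-weight by Lemma~\ref{lem:minjoin} and the equality $|F|=|F'|$, and use the evenness of a circuit's intersection with the cut $\parcut{G}{K}$ to conclude that the circuit through $uv$ also contains the $F$-beam; Lemma~\ref{lem:alt} then makes every edge of that circuit allowed. The paper instead works inside $K$: by Theorem~\ref{thm:sebo}~\ref{item:sebo:inpath} there is a zero-$F$-weight path $P$ in $K$ between $r_K$ and the end of the other allowed edge in $V(K)$, and the hybrid set $(F'\setminus E(K))\,\dot\cup\,((F\cap E(K))\Delta E(P))$ is again a minimum join, so all edges of $P$ are allowed. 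Your argument buys self-containedness: it uses only generic exchange facts about minimum joins (parity of $F\Delta F'$, Lemmas~\ref{lem:minjoin} and~\ref{lem:alt}) together with part~\ref{item:sebo:noncap} of Seb\H{o}'s theorem, and it avoids the verification (left terse in the paper) that the hybrid set is indeed a minimum join. The paper's argument buys locality: the allowed connection it produces lies entirely within $K$ and joins the two cut ends inside $K$ directly, whereas your circuit may wander outside $K$ --- which is harmless for the lemma as stated, and also suffices for its later use in Theorem~\ref{thm:shore2equiv}.
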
 
\begin{proof} Let $e \in \parcut{G}{K}\cap F$, and let $f \in \parcut{G}{K}\setminus F$ be an allowed edge. 
Let $x$ and $y$ be the ends of $e$ and $f$, respectively, that are contained in $V(K)$. 
Theorem~\ref{thm:sebo} \ref{item:sebo:inpath} states that $K$ has a path $P$ between $x$ and $y$ with $w_F(P) = 0$. 
Let $F'$ be a minimum join of $(G, T)$ with $f\in F$. 
Then $(F' \setminus E(K) ) \dot\cup ( (F\cap E(K)) \Delta E(P) )$ is also a minimum join of $(G, T)$. 
Therefore, every edge of $P$ is allowed. 
Accordingly, $x$ and $y$ are factor-connected. 
This proves the lemma. 
\end{proof}

\begin{theorem}  \label{thm:shore2equiv} 
Let $(G, T)$ be a bipartite graft, and let $\tilde{F}$ be the set of allowed edges of $(G, T)$. 
Let $r \in V(G)$ and $K\in \noncapall{G}{T}{r}$. 
Then, there are a factor-component $C \in \tcomp{G}{T}$ and two distinct equivalence classes $S_1, S_2 \in \pargtpart{G}{T}{C}$ 
such that $\shore{G}{\tilde{F} \cap \parcut{G}{K} } \cap V(K) \subseteq S_1$ and $\shore{G}{\tilde{F} \cap \parcut{G}{K} } \setminus V(K) \subseteq S_2$. 
\end{theorem}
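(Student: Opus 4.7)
My plan is to fix a minimum join $F$ of $(G, T)$ and take $S_1$ and $S_2$ to be the equivalence classes in $\tpart{G}{T}$ containing the $F$-root $r_K$ and the $F$-antiroot $s_K$ of $K$, respectively. Lemma~\ref{lem:in2equiv} immediately places all allowed edges of $\parcut{G}{K}$ (including the $F$-beam $r_K s_K$) in a single factor-component $C \in \tcomp{G}{T}$, so $\shore{G}{\tilde{F} \cap \parcut{G}{K}} \subseteq V(C)$ and $S_1, S_2 \in \pargtpart{G}{T}{C}$; moreover, $S_1 \neq S_2$ because $\distgtf{G}{T}{F}{r_K}{s_K} = -1 \neq 0$ forbids $r_K \gtsim{G}{T} s_K$.

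For the ``inside'' containment, fix $x \in \shore{G}{\tilde{F} \cap \parcut{G}{K}} \cap V(K)$ and pick an allowed edge $xy$ with $y \notin V(K)$. Lemma~\ref{lem:allowed2dist} gives $\distgtf{G}{T}{F}{x}{y} = -1$, and Proposition~\ref{prop:adjdist} combined with the fact that $K$ is the connected component of $G[\laylegtr{G}{T}{r}{i_K}]$ containing $x$ (so $y \notin \laylegtr{G}{T}{r}{i_K}$) forces $\distgtf{G}{T}{F}{r}{x} = i_K$, i.e., $x \in \ak{G}{T}{K}$. Theorem~\ref{thm:sebo}~\ref{item:sebo:inpath} then supplies a path in $K$ of $w_F$-weight $0$ from $r_K$ to $x$, yielding $\distgtf{G}{T}{F}{r_K}{x} \le 0$, while Lemma~\ref{lem:ak2extreme} supplies $\distgtf{G}{T}{F}{r_K}{x} \ge 0$. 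Hence $x \gtsim{G}{T} r_K$.

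The delicate step is the ``outside'' containment. For $y \in \shore{G}{\tilde{F} \cap \parcut{G}{K}} \setminus V(K)$ with $y \neq s_K$, let $xy$ be the corresponding allowed edge with $x \in V(K)$; as above $x \in \ak{G}{T}{K}$ and $xy \notin F$. Concatenating $xy$, a $w_F$-weight-$0$ path in $K$ from $x$ to $r_K$, and the $F$-beam $r_K s_K$ produces a genuine path from $y$ to $s_K$ (all internal vertices lie in $V(K)$ while $y, s_K$ do not) of $w_F$-weight $(+1) + 0 + (-1) = 0$, so $\distgtf{G}{T}{F}{y}{s_K} \le 0$. For the matching lower bound, I invoke Lemma~\ref{lem:dijoin2noncap}~\ref{item:dijoin2noncap:antiroot} to exclude $\distgtf{G}{T}{F}{y}{s_K} \le -2$: that lemma makes $K$ the connected component of $G[\laylegtr{G}{T}{s_K}{-1}]$ containing $V(K)$, so a $y$ adjacent to $V(K)$ with $\distgtf{G}{T}{F}{s_K}{y} \le -1$ would itself lie in $V(K)$, contrary to assumption. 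The remaining value $-1$ is then ruled out by bipartite parity: in bipartite $G$, all paths between $y$ and $s_K$ share the same $w_F$-weight parity, which the constructed $0$-weight path pins to being even. Therefore $\distgtf{G}{T}{F}{y}{s_K} = 0$, and $y \gtsim{G}{T} s_K$.

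The principal obstacle is this last lower bound; the combination of Lemma~\ref{lem:dijoin2noncap}~\ref{item:dijoin2noncap:antiroot} (to exclude $\le -2$) with bipartite parity (to exclude $-1$) is the key trick that forces $\distgtf{G}{T}{F}{y}{s_K}$ to vanish.
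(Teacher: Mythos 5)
Your argument is correct, and for the decisive ``outside'' step it takes a genuinely different route from the paper's. The paper, after invoking Lemma~\ref{lem:in2equiv} exactly as you do, finishes both containments by extremeness with respect to the original root $r$: the inside ends lie in $\ak{G}{T}{K}$, the outside ends lie in $\parNei{G}{K}\subseteq V(L)\cap \levelgtr{G}{T}{r}{i+1}$ for the distance component $L\in\laycompgtr{G}{T}{r}{i+1}$ containing $K$ (via Proposition~\ref{prop:adjdist}), and each of these sets is extreme by Lemma~\ref{lem:ak2extreme}; factor-connected vertices within a common extreme set are then concluded to lie in a single member of $\tpart{G}{T}$, which implicitly uses the fact from the earlier papers that factor-connected vertices have nonpositive distance. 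You avoid that implicit step by pinning the distances to zero outright: for the inside ends via the zero-weight paths of Theorem~\ref{thm:sebo}~\ref{item:sebo:inpath} against extremeness of $\ak{G}{T}{K}$, and for the outside ends by re-rooting at $s_K$, using Lemma~\ref{lem:dijoin2noncap}~\ref{item:dijoin2noncap:antiroot} for the lower bound on $\distgtf{G}{T}{F}{y}{s_K}$ and the explicit path through $x$, the zero-weight path in $K$, and the $F$-beam for the upper bound. What your version buys is self-containedness with respect to this paper's stated lemmas; what it costs is importing the heavier universality lemma of Section~6, which is legitimate since that section precedes the theorem. Two minor points: the component argument you state already excludes every value $\distgtf{G}{T}{F}{s_K}{y}\le -1$, not merely $\le -2$, so the bipartite-parity step is correct but redundant; and the conclusions $x\gtsim{G}{T}r_K$ and $y\gtsim{G}{T}s_K$ need membership in a common factor-component, which you do secure in your first paragraph via Lemma~\ref{lem:in2equiv}, so no gap remains.
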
 
\begin{proof} Lemma~\ref{lem:in2equiv} implies that 
the vertices in $\shore{G}{\tilde{F} \cap \parcut{G}{K} } \cap V(K)$ are factor-connected; 
Lemma~\ref{lem:ak2extreme} further implies that they are contained in a member of $\pargtpart{G}{T}{C}$. 
Similarly, Lemma~\ref{lem:in2equiv} also implies that the vertices in $\shore{G}{\tilde{F} \cap \parcut{G}{K} } \setminus V(K)$ are factor-connected. 
Let $i \in \interval{G}{T}{r}$ be such that $K \in \laycompgtr{G}{T}{r}{i}$ and $L$ be the member of $\laycompgtr{G}{T}{r}{i+1}$ with $V(K)\subseteq V(L)$.  
Proposition~\ref{prop:adjdist} implies that $\parNei{G}{K}$ is contained in $V(L)\cap \levelgtr{G}{T}{r}{i+1}$. 
Hence, Lemma~\ref{lem:ak2extreme} implies that $\shore{G}{\tilde{F} \cap \parcut{G}{K} } \setminus V(K)$ is contained in a member of $\pargtpart{G}{T}{C}$. 
Thus, the theorem is proved. 
\end{proof}

\begin{definition} 
Let $(G, T)$ be a bipartite graft, and let $r \in V(G)$. 
Under Theorem~\ref{thm:shore2equiv}, 
for each $K \in \noncapall{G}{T}{r}$, we call the member $S$ of $\tpart{G}{T}$  
the {\em root class} (resp. {\em antiroot  class}) of $K$ if  
$S \subseteq V(K)$ (resp. $S \cap V(K) = \emptyset$) holds and 
$S$ contains the ends of allowed edges from $\parcut{G}{K}$. 
For the root class $S_K$ of $K$, we call the set $V(K)\cap S_K$ the {\em root fragment} of $K$. 
\end{definition}

\section{Structure of Decapital Distance Components}

\subsection{Canonical Structure of Decapital Distance Components} 

In this section, we establish Theorem~\ref{thm:ak2part}, 
which describes the internal structure of a decapital distance component $K$ in a bipartite graft $(G, T)$ 
in terms of the members of $\tpart{G}{T}$ that constitute $\ak{G}{T}{K}$. 
The analogue of this theorem for capital distance components is provided in Kita~\cite{kita2022tight}. 

\begin{definition} 
Let $(G, T)$ be a bipartite graft, and let $r \in V(G)$. 
Let $K \in \noncapall{G}{T}{r}$. 
Let $S \subseteq V(G)$ be a set of vertices such that $S\cap \ak{G}{T}{K} \neq \emptyset$.  
We denote by $\neicompk{G}{T}{K}{S}$ the family of members from $\dkconn{G}{T}{K}$ that are adjacent to a vertex in $S$ with an allowed edge. 
The set of vertices contained in the members of $\neicompk{G}{T}{K}{S}$ is denoted by $\neisetk{G}{T}{K}{S}$. 
\end{definition}

\begin{definition} 
Let $(G, T)$ be a bipartite graft, and let $r \in V(G)$. 
Let $K \in \noncapall{G}{T}{r}$.  
Let $S_K$ and $\tilde{S}_K$  be the root class and the root fragment of $K$, respectively. 
Let $\mathcal{S}$ be the family of members from $\tpart{G}{T}\setminus \{S_K\}$ that contain a vertex in $\ak{G}{T}{K}$. 
We denote the family $\{\tilde{S}_K \} \cup \mathcal{S}$ by $\akfam{G}{T}{K}$. 
\end{definition}

\begin{theorem}  \label{thm:ak2part} 
Let $(G, T)$ be a bipartite graft, and let $r \in V(G)$. 
Let $K \in \noncapall{G}{T}{r}$.   
Let $\tilde{S}_K$ be the root fragment of $K$. 
For each $S \in \akfam{G}{T}{K} \setminus \{ \tilde{S}_K \}$, let $C_S \in \tcomp{G}{T}$ be the factor-component with $S \in \pargtpart{G}{T}{C_S}$. 
Then, the following properties hold. 
\begin{rmenum} 
\item \label{item:ak2part:part}  $\akfam{G}{T}{K}$ is a partition of $\ak{G}{T}{K}$. 
\item  \label{item:ak2part:nonallowed} $V(C_S) \subseteq V(K)$ holds for every $S \in \akfam{G}{T}{K} \setminus \{ \tilde{S}_K \}$. 
\item  \label{item:ak2part:comp} $\dkconn{G}{T}{K} = \dot\bigcup \{ \neicompk{G}{T}{K}{S} : S \in \akfam{G}{T}{K} \}$. 
\end{rmenum} 
\end{theorem}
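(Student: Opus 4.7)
The plan is to prove (ii) first, derive (i) as a corollary, and then handle (iii) through the primal graft structure of $(G, T)_F[K]$.

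For (ii), I fix $S \in \akfam{G}{T}{K} \setminus \{\tilde{S}_K\}$ and pick $v \in S \cap \ak{G}{T}{K}$. The pivotal observation is that $\distgt{G}{T}{v}{r_K} = 0$: the upper bound $\le 0$ follows from Theorem~\ref{thm:sebo}~\ref{item:sebo:inpath}, which yields a path inside $K$ of $F$-weight $0$ between $r_K$ and $v$, while the lower bound $\ge 0$ follows from the extremity of $\ak{G}{T}{K}$ (Lemma~\ref{lem:ak2extreme}). If $v$ were factor-connected to $r_K$, this would force $v \gtsim{G}{T} r_K$ and hence $v \in S_K$, contradicting $S \neq S_K$. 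Hence $C_S \neq C_{S_K}$, and in particular $V(C_S) \cap V(C_{S_K}) = \emptyset$. Now, assuming for contradiction that $V(C_S) \not\subseteq V(K)$, factor-connectedness inside $C_S$ produces an allowed edge $xy$ of $C_S$ with $x \in V(K)$ and $y \notin V(K)$; Theorem~\ref{thm:shore2equiv} places $x$ in $S_K$, hence in $V(C_{S_K})$. Since $x \in V(C_S)$ too, this violates vertex-disjointness.

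Part (i) is then essentially a corollary. Any $S \in \akfam{G}{T}{K} \setminus \{\tilde{S}_K\}$ meets $\ak{G}{T}{K}$, so Theorem~\ref{thm:distunit} places every vertex of $S$ at the same level as that meeting point; combined with $S \subseteq V(K)$ from (ii), this gives $S \subseteq \ak{G}{T}{K}$. The inclusion $\tilde{S}_K \subseteq \ak{G}{T}{K}$ is analogous, via Theorem~\ref{thm:distunit} again. Coverage of $\ak{G}{T}{K}$ follows by assigning each of its vertices to the class of $\tpart{G}{T}$ containing it, and pairwise disjointness is inherited from that of $\tpart{G}{T}$ together with $\tilde{S}_K \subseteq S_K$.

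For (iii), my plan is to view each $D \in \dkconn{G}{T}{K}$ as a decapital distance component of the primal graft $(G, T)_F[K]$ at level $-1$ relative to the root $r_K$ (Theorem~\ref{thm:sebo}~\ref{item:sebo:inpath}). The $F \cap E(K)$-beam of $D$ then supplies an allowed edge from $D$ into $\ak{G}{T}{K}$, and the analog of Theorem~\ref{thm:shore2equiv} applied inside $(G, T)_F[K]$ forces every allowed edge from $D$ to land in a single equivalence class of the Kotzig--Lov\'asz decomposition of $(G, T)_F[K]$. The main obstacle is to identify the restriction of that decomposition to $\ak{G}{T}{K}$ with $\akfam{G}{T}{K}$; concretely, the work is to show, for $u, v \in \ak{G}{T}{K}$, that $\distgt{G}{T}{u}{v} = 0$ is equivalent to the existence of an $F$-weight-$0$ path contained in $K$. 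Lemma~\ref{lem:ear2nonneg} is the natural tool here, since every excursion from $V(K)$ contributes nonnegative $F$-weight with the equality case tightly controlled, so any external detour should be splice-able back into $V(K)$ without increasing the total weight.
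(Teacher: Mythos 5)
Your treatment of \ref{item:ak2part:nonallowed} and \ref{item:ak2part:part} is correct and is essentially the paper's argument: Theorem~\ref{thm:shore2equiv} forces the $K$-side ends of allowed edges of $\parcut{G}{K}$ into the root class, extremity of $\ak{G}{T}{K}$ (Lemma~\ref{lem:ak2extreme}) together with the weight-$0$ paths of Theorem~\ref{thm:sebo}~\ref{item:sebo:inpath} separates the other classes from $S_K$, and Theorem~\ref{thm:distunit} pushes each class into the level of $\ak{G}{T}{K}$.

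For \ref{item:ak2part:comp}, however, what you give is a plan with a hole you yourself flag, and the hole is real. You want to apply (an analogue of) Theorem~\ref{thm:shore2equiv} inside the primal graft $(G, T)_F[K]$, but the family $\neicompk{G}{T}{K}{S}$ is defined via edges allowed in $(G, T)$, whereas the statement you would get inside $(G, T)_F[K]$ constrains edges allowed in $(G, T)_F[K]$ and speaks of the Kotzig--Lov\'asz classes of $(G, T)_F[K]$. Bridging the two is not routine: a minimum join $F'$ of $(G, T)$ containing an allowed edge of $E(K)$ need not restrict to a minimum join of $(G, T)_F[K]$, because the $T$-set of $(G, T)_F[K]$ was defined from $F$ and the beam of $K$ in $F'$ may attach to $K$ at a different vertex of the root class; so ``allowed in $(G,T)$ within $K$ $\Rightarrow$ allowed in $(G,T)_F[K]$'' (which your uniqueness step needs) requires an argument, and the identification of the restricted decomposition with $\akfam{G}{T}{K}$ is left entirely as ``the main obstacle.'' The paper avoids this detour altogether: since $K$ is a component of $G[\laylegtr{G}{T}{r}{i}]$, the members of $\dkconn{G}{T}{K}$ are precisely the components of $G[\laylegtr{G}{T}{r}{i-1}]$ contained in $K$, i.e.\ they are themselves decapital distance components of $(G, T)$ with respect to the same root $r$. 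Then Theorem~\ref{thm:sebo}~\ref{item:sebo:noncap} gives each $L \in \dkconn{G}{T}{K}$ an $F$-beam whose outer end lies in $\ak{G}{T}{K}$, hence in some member of $\akfam{G}{T}{K}$ by \ref{item:ak2part:part} (coverage), and Lemma~\ref{lem:in2equiv} applied to $L$ in $(G, T)$, combined with the fact (via Lemma~\ref{lem:ak2extreme} and the weight-$0$ paths) that distinct members of $\akfam{G}{T}{K}$ lie in distinct factor-components, gives uniqueness. If you want to salvage your route, you must prove the two identifications you gesture at (allowed edges and classes of $(G,T)_F[K]$ versus $(G,T)$ on $\ak{G}{T}{K}$); as written, \ref{item:ak2part:comp} is not proved.
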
 
\begin{proof}  Theorem~\ref{thm:shore2equiv} implies 
that, for every allowed edge in $\parcut{G}{K}$, its end in $K$ is contained in $\tilde{S}_K$.  
Additionally, Lemma~\ref{lem:ak2extreme} implies that the vertices in $\ak{G}{T}{K}\setminus \{\tilde{S}_K \}$ 
are not factor-connected with the vertices in $\tilde{S}_K$.   
Therefore, no vertex in $\ak{G}{T}{K}\setminus \{\tilde{S}_K \}$ can be factor-connected with vertices in $V(G)\setminus V(K)$. 
Thus, \ref{item:ak2part:nonallowed} follows. 
Consequently, $S \subseteq  V(K)$ follows for every $S\in \akfam{G}{T}{K} \setminus \{\tilde{S}_K \}$. 
Theorem~\ref{thm:distunit} further implies $S \subseteq \ak{G}{T}{K}$. 
Thus, \ref{item:ak2part:part} follows. 

Next, we prove \ref{item:ak2part:comp}. 
Theorem~\ref{thm:sebo}~\ref{item:sebo:noncap} implies that every $L \in \dkconn{G}{T}{K}$ has a member $S$ of $\akfam{G}{T}{K}$ with  $L \in \neicompk{G}{T}{S}{K}$.  
As Lemma~\ref{lem:ak2extreme} implies that  no distinct members from $\akfam{G}{T}{K}$ can be contained in the same factor-component, 
Lemma~\ref{lem:in2equiv} further implies such $S$ uniquely exists for $L$. 
This proves \ref{item:ak2part:comp}. 
The theorem is proved. 
\end{proof}

\subsection{Negative Sets Avoiding Specified Vertex Sets}

We introduce the concept of $F$-negative sets that avoid a specified set of vertices.

\begin{definition} 
Let $(G, T)$ be a graft, and let  $F$ be a minimum join.  
Let $S \subseteq V(G)$.  
For $Y \in V(G)\setminus S$, 
we say that an $F$-negative set $X$ {\em avoids} $Y$  if $X\cap Y = \emptyset$. 
\end{definition}

\begin{observation} \label{obs:neg2max} 
Let $(G, T)$ be a graft, and let  $F$ be a minimum join.  
Let $S \subseteq V(G)$. Let $X_1, X_2 \subseteq V(G)\setminus S$. 
If $X_1$ and $X_2$ are $F$-negative sets for $S$, 
then $X_1 \cup X_2$ is also a $F$-negative set for $S$.  
  Let $Y \subseteq V(G)\setminus S$. 
If $X_1$ and $X_2$ are $F$-negative sets for $S$ avoiding $Y$, 
then $X_1 \cup X_2$ is also an $F$-negative set for $S$ avoiding $Y$. 
\end{observation}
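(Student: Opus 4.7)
The statement unpacks directly from the definition of an $F$-negative set, so the plan is essentially to chase definitions and observe that every required certifying path for a vertex in $X_1 \cup X_2$ is already supplied by whichever of $X_1$ or $X_2$ contains that vertex.

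For the first assertion, the plan is as follows. Fix an arbitrary $x \in X_1 \cup X_2$ and, without loss of generality, assume $x \in X_1$. Because $X_1$ is $F$-negative for $S$, there exists a path $P$ between $x$ and some vertex $s \in S$ with $w_F(P) < 0$ and $V(P) \setminus \{s\} \subseteq X_1$. Since $X_1 \subseteq X_1 \cup X_2$, the same path $P$ witnesses that $X_1 \cup X_2$ satisfies the $F$-negativity condition at $x$. As $x$ was arbitrary, $X_1 \cup X_2$ is $F$-negative for $S$. Note that we also need $(X_1 \cup X_2) \subseteq V(G) \setminus S$, which is immediate from $X_1, X_2 \subseteq V(G) \setminus S$.

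For the second assertion, the same argument applies and we additionally observe that $(X_1 \cup X_2) \cap Y = (X_1 \cap Y) \cup (X_2 \cap Y) = \emptyset \cup \emptyset = \emptyset$, so $X_1 \cup X_2$ avoids $Y$.

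There is no real obstacle here: both parts are straightforward unions of witnesses, and the avoidance condition is preserved under union because it is a purely set-theoretic constraint on the underlying vertex set, not on the paths themselves. The observation is recorded precisely so that the maximum $F$-negative set avoiding $Y$ is well-defined, paralleling the unrestricted case used in Proposition~\ref{prop:negset}.
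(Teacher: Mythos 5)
Your proof is correct and is exactly the routine definition-chasing argument the paper has in mind: the paper states this as an unproved Observation precisely because each vertex of $X_1 \cup X_2$ inherits its certifying negative-weight path from whichever $X_i$ contains it, and disjointness from $S$ and from $Y$ is preserved under union. Nothing is missing.
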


\begin{definition} 
Observation~\ref{obs:neg2max} implies that there is  the maximum $F$-negative set for $S$ avoiding $Y$. 
Under Observation~\ref{obs:neg2max}, 
we denote by $\negsetnf{G}{T}{F}{S}{Y}$ the maximum $F$-negative set for $S$ avoiding $Y$. 
\end{definition}

\subsection{Negative Sets and Decapital Distance Components}

In this section, we provide Lemmas~\ref{lem:neicomp2neg}, \ref{lem:neineicomp2nonneg}, \ref{lem:nei2posi}, and \ref{lem:ak2negset}, 
and use them to derive Theorem~\ref{thm:neicomp2negset}. 
This theorem characterizes $\neisetk{G}{T}{K}{S}$ in terms of the maximum negative set 
for a bipartite graft $(G, T)$, a decapital distance component $K$, and $S \in \akfam{G}{T}{K}$, 
and thus provides a refinement of Theorem~\ref{thm:ak2part}. 
This theorem is also an analogue of a result given in Kita~\cite{kita2022tight}.
Lemma~\ref{lem:ak2negset} can be derived from Theorem~\ref{thm:sebo}.

\begin{lemma}  \label{lem:neicomp2neg} 
Let $(G, T)$ be a bipartite graft, and let $r \in V(G)$. Let $F$ be a minimum join. 
Let $K \in \noncapall{G}{T}{r}$.  
Let $S_K \in \tpart{G}{T}$ be the root class and $T_K \in \tpart{G}{T}$ be the antiroot class of $K$. 
Let $L \in \dkconn{G}{T}{K}$. 
Let $T_L \in \tpart{G}{T}$ be the antiroot class of $L$, and let $\tilde{T}_L := T_L \cap \ak{G}{T}{K}$.  
Then, the following properties hold. 
\begin{rmenum} 
\item \label{item:neicomp2neg:path} For every $x \in V(L)$, there is a path of negative $F$-weight between $x$ and a vertex $y \in \tilde{T}_L$ whose vertices except $y$ are contained in $V(L)$.  
\item \label{item:neicomp2neg:set} Accordingly, $V(L)$ is contained in the maximum negative set for $\tilde{T}_L$ avoiding $T_{K}$. 
\end{rmenum}  
\end{lemma}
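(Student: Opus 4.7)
The plan is to treat $L$ as a decapital distance component of the primal subgraft $(G,T)_F[K]$ with respect to the $F$-root $r_K$ of $K$, and then to construct the required negative-weight path by concatenating a shortest path inside $L$ with the $F$-beam of $L$.

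I would first invoke Theorem~\ref{thm:sebo}~\ref{item:sebo:inpath} applied to $(G, T)$, $r$, $F$, and $K$: this shows that $(G, T)_F[K]$ is primal with respect to $r_K$ with minimum join $F \cap E(K)$, and that in this primal graft the $F$-distance from $r_K$ equals $0$ on $\ak{G}{T}{K}$ and is strictly negative on $\dk{G}{T}{K}$. Hence $\dkconn{G}{T}{K}$ coincides with the family of decapital distance components of $(G, T)_F[K]$ at level $-1$ from $r_K$, so $L$ is itself a decapital distance component of $(G, T)_F[K]$ with respect to $r_K$, and therefore has an $F$-beam $f = r_L s_L$ with $r_L \in V(L)$ and $s_L \not\in V(L)$. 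Because $s_L$ is adjacent in $G$ to $r_L \in \dk{G}{T}{K}$ yet lies outside the component $L$ of $G[\dk{G}{T}{K}]$, $s_L$ cannot itself belong to $\dk{G}{T}{K}$; thus $s_L \in \ak{G}{T}{K}$, and since $s_L$ is an end of the $F$-beam of $L$, it lies in the antiroot class $T_L$, giving $s_L \in \tilde{T}_L$.

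Next I would apply Theorem~\ref{thm:sebo}~\ref{item:sebo:inpath} a second time, now to the primal graft $(G, T)_F[K]$ with root $r_K$, minimum join $F \cap E(K)$, and decapital distance component $L$: this yields that $(G, T)_F[L]$ is primal with respect to $r_L$ with minimum join $F \cap E(L)$. Primality provides, for any $x \in V(L)$, a path $Q$ inside $L$ from $r_L$ to $x$ with $w_F(Q) \le 0$. Then $P := Q + f$ is a path from $x$ to $s_L \in \tilde{T}_L$ whose vertices except $s_L$ all lie in $V(L)$, and whose $F$-weight is $w_F(P) = w_F(Q) - 1 \le -1 < 0$. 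This establishes~\ref{item:neicomp2neg:path}.

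For \ref{item:neicomp2neg:set}, the family of paths produced above witnesses that $V(L)$ is itself an $F$-negative set for $\tilde{T}_L$. Moreover, since $T_K$ is the antiroot class of $K$ one has $T_K \cap V(K) = \emptyset$, while $V(L) \subseteq V(K)$, so $V(L)$ avoids $T_K$; hence $V(L) \subseteq \negsetnf{G}{T}{F}{\tilde{T}_L}{T_K}$. I do not anticipate a major obstacle: the main subtlety is simply the identification of $L$ as a decapital distance component of the primal subgraft $(G, T)_F[K]$, which permits the second application of Theorem~\ref{thm:sebo}~\ref{item:sebo:inpath}, together with the verification that $s_L \in \ak{G}{T}{K}$ so that $s_L$ belongs to $\tilde{T}_L$.
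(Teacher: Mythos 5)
Your proof is correct and takes essentially the same route as the paper: a path of nonpositive $F$-weight inside $L$ from $x$ to the $F$-root of $L$ obtained from Seb\H{o}'s theorem, extended by the $F$-beam of $L$ to reach $s_L \in \tilde{T}_L$, with the avoidance of $T_K$ following from $V(L) \subseteq V(K)$. The only cosmetic difference is that you justify the existence of the beam, root, and antiroot of $L$ by a nested application of Theorem~\ref{thm:sebo}~\ref{item:sebo:inpath} inside the primal subgraft $(G, T)_F[K]$, whereas the paper applies the theorem to $L$ viewed directly as a decapital distance component of $(G, T)$ with respect to $r$.
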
 
\begin{proof}  Theorem~\ref{thm:sebo} \ref{item:sebo:inpath} states that $L$ has a path $P$ between $x$ and the $F$-root of $L$ whose $F$-weight is no greater than $0$.  
Thus, $P + e$ is a path of negative $F$-weight between $x$ and $s_L$ 
whose vertices except $s_L$ are contained in $V(L)$, where $e$ and $s_L$ are the $F$-beam and $F$-antiroot of $L$.  
Clearly, $s_L \in \tilde{T}_L$ holds. 
Thus,  $P + e$ establishes \ref{item:neicomp2neg:path}. 
Also, $P + e$ is clearly disjoint from $T_K$. 
Thus, \ref{item:neicomp2neg:path} proves \ref{item:neicomp2neg:set}. 
This completes the proof of the lemma. 
\end{proof}

Lemma~\ref{lem:neineicomp2nonneg} can be obtained by applying Lemma~\ref{lem:ak2extreme}.

\begin{lemma}  \label{lem:neineicomp2nonneg} 
Let $(G, T)$ be a bipartite graft, and let $r \in V(G)$. Let $F$ be a minimum join. 
Let $K \in \noncapall{G}{T}{r}$. 
$L \in \dkconn{G}{T}{K}$, and let $T_L \in \tpart{G}{T}$ be the antiroot class of $L$. 
Let $S \in \akfam{G}{T}{K} \setminus \{ T_L \cap \ak{G}{T}{K} \}$.  
Then, the following properties hold. 
\begin{rmenum} 
\item \label{item:neineicomp2nonneg:ak} 
No path between a vertex in $\ak{G}{T}{L}$ and  a vertex in $S$ can be of negative $F$-weight if it is disjoint from $\ak{G}{T}{K}\setminus S$.  
\item \label{item:neineicomp2nonneg:nonneg} Consequently, $V(L)$ is disjoint from the maximum negative set for $S$.  
\end{rmenum} 
\end{lemma}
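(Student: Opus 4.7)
My plan is to prove \ref{item:neineicomp2nonneg:ak} by constructing an auxiliary walk that reduces the question to the extremeness of $\ak{G}{T}{K}$ (Lemma~\ref{lem:ak2extreme}), and then to derive \ref{item:neineicomp2nonneg:nonneg} from \ref{item:neineicomp2nonneg:ak} by contradiction.

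For \ref{item:neineicomp2nonneg:ak}, let $P$ be a path from $x \in \ak{G}{T}{L}$ to $y \in S$ that is disjoint from $\ak{G}{T}{K} \setminus S$. Let $r_L$ and $s_L$ be the $F$-root and $F$-antiroot of $L$ viewed as a decapital distance component of the primal graft $(G, T)_F[K]$; then $r_L s_L \in F$ and $s_L \in T_L \cap \ak{G}{T}{K}$. Since $\akfam{G}{T}{K}$ partitions $\ak{G}{T}{K}$ by Theorem~\ref{thm:ak2part}~\ref{item:ak2part:part} and $S \neq T_L \cap \ak{G}{T}{K}$, we obtain $s_L \in \ak{G}{T}{K} \setminus S$, so $s_L \notin V(P)$. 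Two applications of Theorem~\ref{thm:sebo}~\ref{item:sebo:inpath} (first to $K$ in $(G, T)$, then to $L$ in $(G, T)_F[K]$) provide a path $Q$ in $L$ from $r_L \in \ak{G}{T}{L}$ to $x$ with $w_F(Q) = 0$. I then form the walk $W := s_L r_L + Q + P$ from $s_L$ to $y$, which satisfies $w_F(W) = w_F(P) - 1$. Extracting a path $P^*$ from $W$ by removing closed subwalks (each of non-negative $F$-weight by Lemma~\ref{lem:minjoin}) gives $w_F(P^*) \leq w_F(P) - 1$, while Lemma~\ref{lem:ak2extreme} applied to $s_L, y \in \ak{G}{T}{K}$ forces $w_F(P^*) \geq 0$. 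Hence $w_F(P) \geq 1$.

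For \ref{item:neineicomp2nonneg:nonneg}, let $X$ denote the maximum negative set for $S$. I first note $X \cap \ak{G}{T}{K} = \emptyset$: any $u \in (\ak{G}{T}{K} \setminus S) \cap X$ would admit a negative path to some $y' \in S \subseteq \ak{G}{T}{K}$, contradicting Lemma~\ref{lem:ak2extreme}. Now suppose for contradiction that some $z \in V(L) \cap X$ exists, and let $R$ be a negative path from $z$ to a vertex $y \in S$ whose internal vertices lie in $X$. By the same double application of Theorem~\ref{thm:sebo}~\ref{item:sebo:inpath}, there is a path $Q'$ in $L$ from $r_L$ to $z$ of non-positive $F$-weight. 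The walk $W' := Q' + R$ from $r_L$ to $y$ then has $w_F(W') < 0$, and the circuit-removal argument extracts a path $P^{**}$ with $w_F(P^{**}) < 0$. Its vertex set lies in $V(L) \cup X \cup \{y\}$, which is disjoint from $\ak{G}{T}{K} \setminus S$ because $V(L) \cap \ak{G}{T}{K} = \emptyset$, $X \cap \ak{G}{T}{K} = \emptyset$, and $y \in S$. Hence $P^{**}$ is a negative path from $r_L \in \ak{G}{T}{L}$ to $y \in S$ avoiding $\ak{G}{T}{K} \setminus S$, contradicting \ref{item:neineicomp2nonneg:ak}.

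The main obstacle I anticipate is exploiting Lemma~\ref{lem:ak2extreme}, which by itself controls only distances within $\ak{G}{T}{K}$: the trick is to bridge from $\ak{G}{T}{L}$ down to $\ak{G}{T}{K}$ via the $F$-beam $r_L s_L$ of $L$, and, for part \ref{item:neineicomp2nonneg:nonneg}, to first eliminate $X \cap \ak{G}{T}{K}$ so that the negative path $R$ stays clear of $\ak{G}{T}{K} \setminus S$. Once these reductions are in place, the remaining calculations are routine walk-to-path manipulations.
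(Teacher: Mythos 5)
The critical step in your argument for \ref{item:neineicomp2nonneg:ak} — extracting a path $P^*$ from the walk $W := s_Lr_L + Q + P$ with $w_F(P^*) \le w_F(W)$ ``by removing closed subwalks (each of non-negative $F$-weight by Lemma~\ref{lem:minjoin})'' — is not justified, and as stated it is false. Lemma~\ref{lem:minjoin} bounds the weight of \emph{circuits}, i.e.\ closed walks without repeated edges. Your $W$ need not be a trail: $P$ is only assumed to avoid $\ak{G}{T}{K}\setminus S$, so it may re-enter $L$ and share edges with $Q$, and the pieces deleted during loop erasure can then traverse such a shared edge twice. A doubled edge of $F$ contributes $-2$, so a deleted closed subwalk can have negative $F$-weight, and the inequality $w_F(P^*)\le w_F(P)-1$ does not follow. (In general, in a conservative graft a walk can be strictly lighter than every path between its ends — e.g.\ a walk oscillating along an $F$-edge — so some structural control on how $P$ meets $Q$ is indispensable.) Concretely, if $Q$ ends with an $F$-edge $ax$ and $P$ starts by returning along $xa$ before exiting to $y$, loop erasure removes the negative closed walk $a\,x\,a$ and the surviving path yields no contradiction with Lemma~\ref{lem:ak2extreme}, even when $w_F(P)\le 0$. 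Since \ref{item:neineicomp2nonneg:nonneg} in your write-up reuses the same extraction (the walk $Q'+R$ has the same problem), the gap propagates to both parts; the rest of your setup (identifying $s_L\in T_L\cap\ak{G}{T}{K}$, hence $s_L\notin V(P)$, and producing $Q$ with $w_F(Q)=0$ via Theorem~\ref{thm:sebo}~\ref{item:sebo:inpath}) is fine but does not by itself close the argument.

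The paper avoids walks altogether by cutting $P$ at the first vertex $y$ of $S$ met when tracing from $x$: since $P$ avoids $\ak{G}{T}{K}\setminus S$ and avoids $S$ before $y$, and since $L$ can only be left through $\ak{G}{T}{K}$, the prefix $xPy$ stays in $V(L)$ up to its last edge; it is thus an in-$L$ path between two vertices of $\ak{G}{T}{L}$ (weight $\ge 0$ by Lemma~\ref{lem:ak2extreme} applied inside $(G,T)_F[K]$) followed by an exit edge in $E_G[L,S]$ which cannot lie in $F$ (it would be the beam of $L$, forcing its outer end into $T_L\cap\ak{G}{T}{K}$, not $S$), plus a segment between two vertices of $S$ of weight $\ge 0$; this gives $w_F(P)>0$ directly. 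For \ref{item:neineicomp2nonneg:nonneg} there is likewise a softer route than your second walk: once $\ak{G}{T}{K}\setminus S$ is excluded from the maximum negative set $X$, any $u\in\ak{G}{T}{L}\cap X$ would itself (by the definition of a negative set applied at $u$) admit a negative path to $S$ avoiding $\ak{G}{T}{K}\setminus S$, contradicting \ref{item:neineicomp2nonneg:ak}; and any negative path from a vertex of $V(L)\cap X$ to $S$ must pass through $\ak{G}{T}{L}$, whose vertex would then lie in $X$. To repair your proof you would either need to reproduce this kind of splitting of $P$, or carry out an explicit case analysis of how $P$ meets $Q+s_Lr_L$ in the spirit of the proof of Lemma~\ref{lem:path2nonroot}; the blanket walk-to-path reduction cannot be salvaged as written.
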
 
\begin{proof} Let  $P$ be a path between $x \in \ak{G}{T}{L}$ and $s \in S$, 
and assume that $P$ does not contain any vertex in $\ak{G}{T}{K} \setminus S$.  
Trace $P$ from $x$, and let $y$ be the first encountered vertex in $S$. 
Then, $xPy$ is the sum of a path $Q$ of $L$ between vertices in $\ak{G}{T}{L}$ and an edge $e \in E_G[L, S]$. 
Lemma~\ref{lem:ak2extreme} implies $w_F(Q) \ge 0$, and Lemma~\ref{lem:in2equiv} implies  $e \not\in F$.    
Additionally, because $yPs$ is a path between vertices in $S$, we have $w_F(yPs) \ge 0$.  
Hence, $w_F(P) = w_F(Q) + w_F(e) + w_F(yPs) > 0$ follows.  
This proves \ref{item:neineicomp2nonneg:ak}.

Lemma~\ref{lem:ak2extreme} implies that $\ak{G}{T}{K}\setminus S$ is disjoint from any negative set for $S$. 
Therefore, \ref{item:neineicomp2nonneg:ak} implies that $\ak{G}{T}{L}$ is disjoint from any negative set for $S$. 
This further implies \ref{item:neineicomp2nonneg:nonneg} 
because every path between a vertex in $L$ and a vertex in $S$ must contain a vertex in $\ak{G}{T}{L}$.  
The lemma is proved. 
\end{proof}

Lemma~\ref{lem:nei2posi} follows from Proposition~\ref{prop:adjdist}, Lemma~\ref{lem:allowed2dist}, and Theorem~\ref{thm:distunit}.

\begin{lemma}  \label{lem:nei2posi} 
Let $(G, T)$ be a bipartite graft, and let $r \in V(G)$. Let $F$ be a minimum join. 
Let $K \in \noncapall{G}{T}{r}$.  
Let $S_K$ and $T_K$ be the root and antiroot classes of $K$, respectively, and $\tilde{S}_K$ be the root fragment of $K$. 
Let $S \in \akfam{G}{T}{K}$.  
Then, the following properties hold. 
\begin{rmenum} 
\item \label{item:nei2posi:nei2posi} 
If $S \neq \tilde{S}_K$, then $\distgtf{G}{T}{F}{s}{x} = 1$ for every $x \in \parNei{G}{S} \setminus V(K)$ and every $s\in S$.  
If $S = \tilde{S}_K$, then $\distgtf{G}{T}{F}{s}{x} = 1$ for every $x \in \parNei{G}{S} \setminus V(K) \setminus T_K$ and every $s\in S$.  
\item \label{item:nei2posi:nei2disjoint} 
Accordingly, 
if $S \neq \tilde{S}_K$, then $V(G)\setminus V(K)$ is disjoint from the maximum negative set for $S$. 
If $S = \tilde{S}_K$, then $V(G)\setminus V(K)$ is disjoint from the maximum $F$-negative set for $S$ avoiding $T_K$. 
\end{rmenum} 
\end{lemma}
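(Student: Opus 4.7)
My plan is to prove part~(i) using Proposition~\ref{prop:adjdist}, Lemma~\ref{lem:allowed2dist}, Theorem~\ref{thm:shore2equiv}, and Theorem~\ref{thm:distunit}, and then derive part~(ii) from~(i) by a short induction on the iterative construction of the maximum $F$-negative set, supplemented by the extremeness of $\ak{G}{T}{K}$ provided by Lemma~\ref{lem:ak2extreme}.

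For~(i), I would fix any neighbor $s' \in S$ of $x$. Proposition~\ref{prop:adjdist} forces $\distgtf{G}{T}{F}{s'}{x} \in \{-1, +1\}$. A distance of $-1$ would, via Lemma~\ref{lem:allowed2dist}, make the crossing edge $s'x \in \parcut{G}{K}$ allowed; Theorem~\ref{thm:shore2equiv} would then place $s'$ in the root class $S_K$ and $x$ in the antiroot class $T_K$. When $S \neq \tilde{S}_K$, the partition property of $\akfam{G}{T}{K}$ from Theorem~\ref{thm:ak2part}~\ref{item:ak2part:part} is violated since $s' \in S \cap S_K = \emptyset$; when $S = \tilde{S}_K$, this contradicts the hypothesis $x \notin T_K$. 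Hence $\distgtf{G}{T}{F}{s'}{x} = 1$, and because $S$ is contained in a single equivalence class of $\tpart{G}{T}$ (either $S$ itself, or $S_K$ when $S = \tilde{S}_K$), Theorem~\ref{thm:distunit} propagates this equality to every $s \in S$.

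For~(ii), let $Y$ be the avoidance set ($Y = T_K$ when $S = \tilde{S}_K$, and $Y = \emptyset$ otherwise) and consider the iterative construction $X_1 \subseteq X_2 \subseteq \cdots$ of the maximum $F$-negative set for $S$ avoiding $Y$: $X_1$ is the set of vertices $y \notin Y$ joined to $S$ by a negative edge, and $X_{k+1}$ adjoins the vertices $y \notin Y$ reachable from $S$ by a negative path whose internal vertices all lie in $X_k$. The plan is to show by induction that $X_k \subseteq V(K)$ for every $k$. The base case is immediate from~(i): a vertex $y \in X_1$ with $y \notin V(K)$ would have a negative edge from some $s \in S$, contradicting~(i). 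For the inductive step, suppose $y \in X_{k+1} \setminus X_k$ with $y \notin V(K)$ is witnessed by a negative path $P$ from $s \in S$ to $y$ whose internal vertices lie in $X_k \subseteq V(K)$. Writing $P = sPy' + y'y$ with $y'$ the last internal vertex (or $y' = s$), we have $y' \in V(K)$ adjacent to $y \notin V(K)$, so $y' \in \ak{G}{T}{K}$ and $y'$ belongs to some $S'' \in \akfam{G}{T}{K}$. Applying~(i) to $S''$ and the neighbor $y$ yields $w_F(y'y) = 1$, while the prefix $sPy'$, a path between two vertices of the extreme set $\ak{G}{T}{K}$, has $F$-weight at least zero by Lemma~\ref{lem:ak2extreme}. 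Hence $w_F(P) \geq 1$, contradicting the negativity of $P$.

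The main technical obstacle lies in the sub-case of the inductive step where $y' \in \tilde{S}_K$ and the terminal vertex $y$ happens to lie in $T_K$, which can only arise when $S \neq \tilde{S}_K$, since otherwise $Y = T_K$ rules it out by construction. Here part~(i) does not directly give $w_F(y'y) = 1$. The plan for closing this case is to exploit the level structure: the $F$-beam edge together with Theorem~\ref{thm:distunit} pins down the level of the class $T_K$ relative to $K$'s level, and this level gap forces $\distgtf{G}{T}{F}{s}{t} \geq 1$ for every $s \in S$ (with $S \neq \tilde{S}_K$) and every $t \in T_K$, precluding any negative path from $S$ terminating in $T_K$ and thus removing the obstruction.
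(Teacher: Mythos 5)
Your part~(i) is correct and follows the paper's own route (non-allowedness via Theorem~\ref{thm:shore2equiv}, then Proposition~\ref{prop:adjdist}, Lemma~\ref{lem:allowed2dist}, and Theorem~\ref{thm:distunit}), so the issue is entirely in part~(ii). The decisive flaw is the claim you use to close your ``problematic sub-case'': it is false that $S \neq \tilde{S}_K$ and the level gap force $\distgtf{G}{T}{F}{s}{t} \ge 1$ for all $t \in T_K$. These signed $F$-distances do not obey the triangle-type inequality that a ``level gap'' argument needs. Concretely, let $G$ be the path $r\, s_K\, a\, b\, c\, d$ and $F = \{s_Ka,\ ab,\ cd\}$, so $T = \{s_K, b, c, d\}$ and $F$ is the unique (hence minimum) join. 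The distances from $r$ are $1, 0, -1, 0, -1$ at $s_K, a, b, c, d$, so $K := G[\{a,b,c,d\}] \in \noncapall{G}{T}{r}$ with $F$-root $a$, $F$-antiroot $s_K$, $\ak{G}{T}{K} = \{a, c\}$, $\tilde{S}_K = \{a\}$, $T_K = \{s_K\}$, and $\{c\} \in \akfam{G}{T}{K} \setminus \{\tilde{S}_K\}$; yet the path $c\,b\,a\,s_K$ gives $\distgtf{G}{T}{F}{c}{s_K} = -1$. So your proposed patch cannot work. (The lemma itself survives in this example only because that negative path passes through $a$ and $b$, which can belong to no negative set for $\{c\}$ --- which is exactly the mechanism of the paper's proof, not of yours.)

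There is a second, structural gap: you treat the maximum $F$-negative set as the limit of the iterative closure $X_1 \subseteq X_2 \subseteq \cdots$. The paper's definition is not iterative --- it only asks that every vertex of the set have a negative path to $S$ whose non-terminal vertices lie in the set, so ``circular'' witnessing is allowed a priori. The closure is a negative set contained in the maximum one, but the reverse inclusion is precisely what your argument needs and never proves; it is false for general $\pm 1$ weightings (it requires the conservativeness coming from $F$ being a minimum join), so it is a substantive unproven lemma, not a restatement of the definition. The paper's proof needs neither ingredient: if some $v \notin V(K)$ lay in the maximum negative set, its witnessing path would contain a vertex $u$ of $\parNei{G}{S} \setminus V(K)$ (and $u \notin T_K$ when $S = \tilde{S}_K$, thanks to the avoidance) or of $\ak{G}{T}{K} \setminus S$; then $u$ itself lies in the negative set, yet by part~(i) and Lemma~\ref{lem:ak2extreme} it admits no negative path to $S$ at all --- a contradiction obtained directly from the defining property of the set. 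If you prefer your inductive framing, the useful observation is that no vertex of $\ak{G}{T}{K}$ can ever enter a negative set for $S$ (extremeness), so a witnessing path can cross out of $V(K)$ only at its end in $S$; this eliminates your sub-case without any distance claim, but it still would not discharge the unproven identification of the maximum negative set with the iterative closure.
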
 
\begin{proof} If $S \neq \tilde{S}_K$, let  $x \in \parNei{G}{S} \setminus V(K)$; 
otherwise, let $x \in \parNei{G}{S} \setminus V(K) \setminus T_K$. 
Let $y \in V(K)$ be a vertex with $xy\in E(G)$. 
Theorem~\ref{thm:shore2equiv} implies that $xy$ is not allowed in $(G, T)$. 
Therefore, Proposition~\ref{prop:adjdist} and Lemma~\ref{lem:allowed2dist} imply that $\distgtf{G}{T}{F}{x}{y} = 1$. 
The statement \ref{item:nei2posi:nei2posi} now follows from Theorem~\ref{thm:distunit}. 

Let $P$ be an arbitrary path between a vertex in $V(G)\setminus V(K)$ and a vertex in $S$.  
Then $P$ contains either a vertex in $\parNei{G}{S} \setminus V(K)$ 
or a vertex in $\ak{G}{T}{K} \setminus S$. 
However, if $S \neq \tilde{S}_K$, 
\ref{item:nei2posi:nei2posi} and Lemma~\ref{lem:ak2extreme} imply that 
there is no path of negative $F$-weight between any vertex in $\parNei{G}{S} \setminus V(K)$ or $\ak{G}{T}{K} \setminus S$ and any vertex in $S$; 
accordingly, $P$ cannot be contained in the maximum negative set for $S$. 
In contrast, if $S = \tilde{S}_K$, 
\ref{item:nei2posi:nei2posi} and Lemma~\ref{lem:ak2extreme} imply that 
there is no path of negative $F$-weight between any vertex in $\parNei{G}{S} \setminus V(K) \setminus T_K$ or $\ak{G}{T}{K} \setminus S$ and any vertex in $S$; 
accordingly, $P$ cannot be contained in the maximum negative set for $S$ if $P$ is disjoint from $T_K$. 
Consequently, \ref{item:nei2posi:nei2disjoint} follows. 
This completes the proof of the lemma. 
\end{proof}

Lemma~\ref{lem:ak2negset} can be obtained by combining Lemmas~\ref{lem:neicomp2neg}, \ref{lem:neineicomp2nonneg}, and \ref{lem:nei2posi}, together with Lemma~\ref{lem:ak2extreme} and Theorem~\ref{thm:ak2part}.

\begin{lemma}  \label{lem:ak2negset} 
Let $(G, T)$ be a bipartite graft, and let $r \in V(G)$. Let $F$ be a minimum join of $(G, T)$. 
Let $K \in \noncapall{G}{T}{r}$.   
Let $T_K$ be the antiroot class of $K$, and let $\tilde{S}_K$ be the root fragment of $K$. 
Then, the following properties hold. 
\begin{rmenum} 
\item \label{item:ak2negset:nei2neg} For each $S \in \akfam{G}{T}{K}$, the set $\neisetk{G}{T}{K}{S}$ is equal to $\negsetnf{G}{T}{F}{S}{T_K}$. 
\item \label{item:ak2negset:nonroot} Furthermore, if $S \neq \tilde{S}_K$, then $\negsetnf{G}{T}{F}{S}{T_K} = \negset{G}{T}{S}$. 
\item \label{item:ak2negset:part} Accordingly, $\dk{G}{T}{K} = \dot\bigcup_{ S \in \akfam{G}{T}{K} } \negsetnf{G}{T}{F}{S}{T_K}$. 
\end{rmenum} 
\end{lemma}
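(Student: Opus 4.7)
The plan is to derive all three parts from part~\ref{item:ak2negset:nei2neg}, treating it as the core statement. For part~\ref{item:ak2negset:nei2neg}, I would fix $S \in \akfam{G}{T}{K}$ and prove the equality $\neisetk{G}{T}{K}{S} = \negsetnf{G}{T}{F}{S}{T_K}$ by two inclusions, with Lemmas~\ref{lem:neicomp2neg}, \ref{lem:neineicomp2nonneg}, and~\ref{lem:nei2posi} doing the heavy lifting.

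For the forward inclusion $\neisetk{G}{T}{K}{S} \subseteq \negsetnf{G}{T}{F}{S}{T_K}$, I would fix $L \in \neicompk{G}{T}{K}{S}$. Since $L$ meets $S$ through an allowed edge, applying Theorem~\ref{thm:shore2equiv} to $L$ identifies the antiroot class $T_L$ of $L$ as the unique equivalence class in $\tpart{G}{T}$ containing this end, so $T_L = S$ when $S \neq \tilde{S}_K$ and $T_L = S_K$ when $S = \tilde{S}_K$; in both cases $\tilde{T}_L := T_L \cap \ak{G}{T}{K} = S$. Lemma~\ref{lem:neicomp2neg}\ref{item:neicomp2neg:set} then places $V(L)$ inside $\negsetnf{G}{T}{F}{S}{T_K}$. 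For the reverse inclusion, I would exclude vertices in three steps: Lemma~\ref{lem:nei2posi}\ref{item:nei2posi:nei2disjoint} confines $\negsetnf{G}{T}{F}{S}{T_K}$ to $V(K)$; the definitional disjointness from $S$ combined with Lemma~\ref{lem:ak2extreme} (extremeness of $\ak{G}{T}{K}$) rules out the remainder of $\ak{G}{T}{K}$; and Lemma~\ref{lem:neineicomp2nonneg}\ref{item:neineicomp2nonneg:nonneg} applied to each $L' \in \dkconn{G}{T}{K} \setminus \neicompk{G}{T}{K}{S}$ (which has $\tilde{T}_{L'} \neq S$) excludes $V(L')$, using that $\negsetnf{G}{T}{F}{S}{T_K} \subseteq \negset{G}{T}{S}$.

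Part~\ref{item:ak2negset:nonroot} follows directly from Lemma~\ref{lem:nei2posi}\ref{item:nei2posi:nei2disjoint}: when $S \neq \tilde{S}_K$, the maximum negative set for $S$ already lies in $V(K)$ and therefore automatically avoids $T_K \subseteq V(G) \setminus V(K)$, and maximality yields $\negset{G}{T}{S} = \negsetnf{G}{T}{F}{S}{T_K}$. Part~\ref{item:ak2negset:part} combines Theorem~\ref{thm:ak2part}\ref{item:ak2part:comp} (the disjoint decomposition of $\dkconn{G}{T}{K}$) with part~\ref{item:ak2negset:nei2neg} termwise. The main obstacle is the bookkeeping in the forward inclusion of part~\ref{item:ak2negset:nei2neg}: cleanly identifying $T_L$ (which is $S$ or $S_K$ depending on whether $S = \tilde{S}_K$) while ensuring $\tilde{T}_L = S$ uniformly, and recognizing that the avoidance of $T_K$ in $\negsetnf{G}{T}{F}{S}{T_K}$ is essential only in the exceptional case $S = \tilde{S}_K$, where it prevents the negative set from escaping $V(K)$ through the $F$-beam.
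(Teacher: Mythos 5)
Your proposal is correct and follows essentially the same route as the paper: the forward inclusion via Lemma~\ref{lem:neicomp2neg} (with Theorem~\ref{thm:shore2equiv} identifying $\tilde{T}_L = S$), and the reverse inclusion by excluding $V(G)\setminus V(K)$ via Lemma~\ref{lem:nei2posi}, $\ak{G}{T}{K}$ via Lemma~\ref{lem:ak2extreme}, and the remaining components of $\dkconn{G}{T}{K}$ via Theorem~\ref{thm:ak2part}~\ref{item:ak2part:comp} and Lemma~\ref{lem:neineicomp2nonneg}. Your derivation of \ref{item:ak2negset:nonroot} by a maximality argument (the maximum negative set for $S$ lies in $V(K)$, hence already avoids $T_K$) is only a cosmetic variant of the paper's parallel-containment argument and is equally valid.
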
 
\begin{proof} Let $S \in \akfam{G}{T}{K}$. 
Lemma~\ref{lem:neicomp2neg} implies $\neisetk{G}{T}{K}{S} \subseteq \negsetnf{G}{T}{F}{S}{T_K}$.  
Since $\negsetnf{G}{T}{F}{S}{T_K} \subseteq \negset{G}{T}{S}$  follows directly from the definition, 
$\neisetk{G}{T}{K}{S} \subseteq \negset{G}{T}{S}$ also holds.  
In the following, we prove $\neisetk{G}{T}{K}{S} \supseteq \negsetnf{G}{T}{F}{S}{T_K}$ 
by proving $V(G) \setminus \neisetk{G}{T}{K}{S} \subseteq V(G)\setminus \negsetnf{G}{T}{F}{S}{T_K}$.  
For the case $S \neq \tilde{S}_K$, 
we shall also prove $\neisetk{G}{T}{K}{S} \supseteq \negset{G}{T}{S}$  
by proving $V(G) \setminus \neisetk{G}{T}{K}{S} \subseteq V(G)\setminus \negset{G}{T}{S}$.   
Note $V(G) \setminus \neisetk{G}{T}{K}{S} = (V(K)\setminus \neisetk{G}{T}{K}{S}) \cup \ak{G}{T}{K} \cup (V(G)\setminus V(K))$.

According to Theorem~\ref{thm:ak2part} \ref{item:ak2part:comp} and Lemma~\ref{lem:neineicomp2nonneg}, 
$\dk{G}{T}{K} \setminus \neisetk{G}{T}{K}{S}$ is disjoint from $\negset{G}{T}{S}$.  
Lemma~\ref{lem:ak2extreme} implies that $\ak{G}{T}{K}$ is disjoint from $\negset{G}{T}{S}$.  
Thus, we have $V(K)\setminus \neisetk{G}{T}{K}{S} \subseteq V(G)\setminus \negset{G}{T}{S} \subseteq V(G)\setminus \negsetnf{G}{T}{F}{S}{T_K}$.  
Additionally, Lemma~\ref{lem:nei2posi} implies that $V(G)\setminus V(K)$ is disjoint from $\negsetnf{G}{T}{F}{S}{T_K}$; 
if $S \neq \tilde{S}_K$, then  $V(G)\setminus V(K)$ is also disjoint from $\negset{G}{T}{S}$.  

Consequently, we obtain $V(G) \setminus \neisetk{G}{T}{K}{S} \subseteq V(G)\setminus \negsetnf{G}{T}{F}{S}{T_K}$; 
thus, $\neisetk{G}{T}{K}{S} \supseteq \negsetnf{G}{T}{F}{S}{T_K}$ is proved, 
and $\neisetk{G}{T}{K}{S} = \negsetnf{G}{T}{F}{S}{T_K}$ follows.   
This proves \ref{item:ak2negset:nei2neg}. 
If $S \neq \tilde{S}_K$, then $\neisetk{G}{T}{K}{S} = \negset{G}{T}{S}$ can also be proved; 
thus, we  obtain $V(G) \setminus \neisetk{G}{T}{K}{S} \subseteq V(G)\setminus \negset{G}{T}{S}$. 
This proves \ref{item:ak2negset:nonroot}. 

The statement \ref{item:ak2negset:part} follows from Theorem~\ref{thm:ak2part}. 
The lemma is proved. 
\end{proof}

\begin{definition} 
Let $(G, T)$ be a bipartite graft, and let $r \in V(G)$. Let $F$ be a minimum join. 
Let $K \in \noncapall{G}{T}{r}$.  
Let $T_K$ be the antiroot class of $K$. 
Let $S \in \akfam{G}{T}{K}$. 
According to Lemma~\ref{lem:ak2negset}, 
the maximum $F$-negative set for $S$ avoiding $T_K$ does not depend on the choice of the minimum join $F$. 
As such, 
we simplify the notation and denote $\negsetnf{G}{T}{F}{S}{T_K}$ as $\negsetn{G}{T}{S}{T_K}$ 
for $S$.  
\end{definition}

We present Theorem~\ref{thm:neicomp2negset}, which combines and reformulates the statements of Theorem~\ref{thm:ak2part}~\ref{item:ak2part:part} and Lemma~\ref{lem:ak2negset}.

\begin{theorem}  \label{thm:neicomp2negset} 
Let $(G, T)$ be a bipartite graft, and let $r \in V(G)$. 
Let $K \in \noncapall{G}{T}{r}$.  
Let $\tilde{S}_K$ and $T_K$ be the root fragment and antiroot class of $K$, respectively. 
Then, the following properties hold. 
\begin{rmenum} 
\item  $\dkconn{G}{T}{K} = \dot\bigcup \{ \neicompk{G}{T}{K}{S} : S \in \akfam{G}{T}{K} \}$. 
Accordingly, $\dk{G}{T}{K} = \dot\bigcup \{ \neisetk{G}{T}{K}{S} : S \in \akfam{G}{T}{K}  \}$. 
\item $\neisetk{G}{T}{K}{\tilde{S}_K} = \negsetn{G}{T}{\tilde{S}_K}{T_K}$.  
For every $S \in \akfam{G}{T}{K} \setminus \{ \tilde{S}_K \}$, $\neisetk{G}{T}{K}{S} = \negsetn{G}{T}{S}{T_K} = \negset{G}{T}{S}$. 
Accordingly, 
$\dk{G}{T}{K}$ is equal to  $\dot\bigcup \{ \negsetn{G}{T}{S}{T_K} : S \in \akfam{G}{T}{K} \}$, 
which is further equal to $\negsetn{G}{T}{\tilde{S}_K}{T_K} \dot\cup \dot\bigcup \{ \negset{G}{T}{S} :  S \in \akfam{G}{T}{K} \setminus \{ \tilde{S}_K \} \}$. 
\end{rmenum} 
\end{theorem}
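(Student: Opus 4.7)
The plan is to assemble the theorem from two previously established pieces: Theorem~\ref{thm:ak2part}, which produces the partition of the components of $G[\dk{G}{T}{K}]$ indexed by $\akfam{G}{T}{K}$, and Lemma~\ref{lem:ak2negset}, which identifies each constituent $\neisetk{G}{T}{K}{S}$ as an appropriate maximum negative set. No new structural argument is required; the task is to combine and restate these cleanly in the $F$-free notation $\negsetn{G}{T}{S}{T_K}$ introduced just before the theorem.

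For part (i), the first equality is precisely Theorem~\ref{thm:ak2part}~\ref{item:ak2part:comp}. The second equality, about vertex sets, follows by taking unions: since distinct members of $\dkconn{G}{T}{K}$ are disjoint connected components of $G[\dk{G}{T}{K}]$, the disjointness at the level of components transfers to disjointness of their vertex sets, giving $\dk{G}{T}{K} = \dot\bigcup\{ \neisetk{G}{T}{K}{S} : S \in \akfam{G}{T}{K} \}$.

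For part (ii), I fix an arbitrary minimum join $F$ and invoke Lemma~\ref{lem:ak2negset}~\ref{item:ak2negset:nei2neg}, obtaining $\neisetk{G}{T}{K}{S} = \negsetnf{G}{T}{F}{S}{T_K}$ for every $S \in \akfam{G}{T}{K}$, including $S = \tilde{S}_K$. Because the left-hand side is defined intrinsically from $(G,T)$ and $K$ and does not depend on $F$, the right-hand side is likewise independent of $F$; this is what legitimizes the notation $\negsetn{G}{T}{S}{T_K}$ preceding the theorem, so the first identity $\neisetk{G}{T}{K}{\tilde{S}_K} = \negsetn{G}{T}{\tilde{S}_K}{T_K}$ is immediate. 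For $S \neq \tilde{S}_K$, the refinement $\negsetn{G}{T}{S}{T_K} = \negset{G}{T}{S}$ is the content of Lemma~\ref{lem:ak2negset}~\ref{item:ak2negset:nonroot}. Substituting these identifications into the union decomposition from (i) yields both presentations of $\dk{G}{T}{K}$ claimed in the second half of (ii).

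There is no real obstacle here; the theorem is essentially a repackaging of Theorem~\ref{thm:ak2part} and Lemma~\ref{lem:ak2negset} into a single form convenient for later use. The only points requiring a brief word are that the partition in Theorem~\ref{thm:ak2part} is indexed by $\akfam{G}{T}{K}$ using exactly the root \emph{fragment} $\tilde{S}_K$ (so no relabelling is needed when matching with Lemma~\ref{lem:ak2negset}) and that the independence from $F$ is what permits the $F$-free notation used in the statement.
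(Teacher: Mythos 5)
Your proposal is correct and matches the paper's treatment: the paper offers no separate proof for Theorem~\ref{thm:neicomp2negset}, stating only that it combines and reformulates Theorem~\ref{thm:ak2part} and Lemma~\ref{lem:ak2negset}, which is exactly the assembly you carry out (part (i) from Theorem~\ref{thm:ak2part}~\ref{item:ak2part:comp} plus disjointness of components, part (ii) from Lemma~\ref{lem:ak2negset}~\ref{item:ak2negset:nei2neg} and \ref{item:ak2negset:nonroot} together with the $F$-independence justifying the notation $\negsetn{G}{T}{S}{T_K}$). No gaps.
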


\begin{ac} 
This work was supported by JSPS KAKENHI Grant Numbers 18K13451 and 23K03192. 
\end{ac}

\bibliographystyle{splncs03.bst}
\bibliography{tdecap.bib}

\begin{thebibliography}{1}
\providecommand{\url}[1]{\texttt{#1}}
\providecommand{\urlprefix}{URL }

\bibitem{DBLP:journals/dam/Kita22}
Kita, N.: Graft analogue of general {K}otzig-{L}ov{\'{a}}sz decomposition.
  Discret. Appl. Math.  322,  355--364 (2022)

\bibitem{kita2022tight}
Kita, N.: Tight cuts in bipartite grafts {I}: Capital distance components.
  arXiv preprint arXiv:2202.00192  (2022)

\bibitem{schrijver2003}
Schrijver, A.: Combinatorial Optimization: Polyhedra and Efficiency.
  Springer-Verlag (2003)

\bibitem{sebo1990}
Seb{\"{o}}, A.: Undirected distances and the postman-structure of graphs. J.
  Comb. Theory, Ser. {B}  49(1),  10--39 (1990)

\end{thebibliography}

\end{document}